\newcommand\mymatrixbraceoffseth{0.7em}
\newcommand\mymatrixbraceoffsetv{0.2em}
\newcommand*\mymatrixbraceleft[4][m]{
    \draw[mymatrixbrace] ($(#1.north west)!(#1-#3-1.south west)!(#1.south west)-(\mymatrixbraceoffseth,0)$)
        -- node[left=2pt] {#4} 
        ($(#1.north west)!(#1-#2-1.north west)!(#1.south west)-(\mymatrixbraceoffseth,0)$);
}
\newcommand*\mymatrixbracetop[4][m]{
    \draw[mymatrixbrace] ($(#1.north west)!(#1-1-#2.north west)!(#1.north east)+(0,\mymatrixbraceoffsetv)$)
        -- node[above=2pt] {#4} 
        ($(#1.north west)!(#1-1-#3.north east)!(#1.north east)+(0,\mymatrixbraceoffsetv)$);
}
\newtheorem{lemma}{Lemma}[section]%
\newtheorem{theorem}[lemma]{Theorem}%
\newtheorem{corollary}[lemma]{Corollary}%
\newtheorem{hypothesis}[lemma]{Hypotheses}%
\newtheorem{conjecture}[lemma]{Conjecture}%
\theoremstyle{definition}
\newtheorem{definition}[lemma]{Definition}%
\newtheorem{example}[lemma]{Example}%
\theoremstyle{remark}
\newtheorem{remark}[lemma]{Remark}%
\let\oldnl\nl
\newcommand{\nlnonumber}{\renewcommand{\nl}{\let\nl\oldnl}}
\newcommand{\N}{\mathbb{N}} 
\newcommand{\F}{\mathbb{F}} 
\definecolor{amethyst}{rgb}{0.6, 0.4, 0.8}
\definecolor{amethyst}{rgb}{0.59, 0.44, 0.84}
\definecolor{blue(ncs)}{rgb}{0.0, 0.53, 0.74}
\definecolor{dandelion}{rgb}{0.99, 0.76, 0.0}
\newcommand{\slp}{\mathfrak{S}} 
\newcommand{\slength}{\Upsilon} 
\newcommand{\ssize}{\mathfrak{b}} 
\newcommand{\me}{\mathfrak{m}}
\newcommand{\ins}{\mathcal{I}}
\newcommand{\slist}{\mathcal{M}}
\newcommand{\cR}{\textcolor{red}}
\newcommand{\cB}{\textcolor{blue}}
\newcommand{\cO}{\textcolor{orange}}
\newcommand{\cP}{\textcolor{purple}}
\definecolor{armygreen}{rgb}{0.01, 0.75, 0.24}
\definecolor{electriccyan}{rgb}{0.0, 1.0, 1.0}
\newcommand{\cGG}{\textcolor{armygreen}}
\newcommand{\cYY}{\textcolor{electriccyan}}
\DeclareMathOperator{\GL}{GL}
\DeclareMathOperator{\SL}{SL}
\DeclareMathOperator{\diag}{diag}
\DeclareMathOperator{\Eval}{Eval}
\DeclareMathOperator{\fail}{fail}
\DeclareMathOperator{\Fix}{Fix}
\newcommand{\tr}{\operatorname{Tr}}
\newcommand{\ps}{\mathfrak{s}}
\newcommand{\MB}{\mathcal{B}}
\newcommand{\PME}{z_1}
\newcommand{\PMZ}{z_2}
\newcommand{\GUE}{t}
\newcommand{\prel}{\omega}
\newcommand{\rvline}{\hspace*{-\arraycolsep}\vline\hspace*{-\arraycolsep}}
\newcommand{\GUFE}{L}
\newcommand{\MZero}{\textcolor{lightgray}{0}}
\newcommand{\stingb}{W}
\newcommand{\stingt}{E}
\newcommand{\asgn}{\, \textcolor{blue(ncs)}{\gets} \,}
\newcommand{\GoingDown}{\FuncSty{GoingDown}}
\newcommand{\GoingUp}{\FuncSty{GoingUp}}
\newcommand{\StandardGenerators}{\FuncSty{StandardGenerators}}
\newcommand{\BaseCase}{\FuncSty{BaseCase}}
\definecolor{inputcolor}{rgb}{0.0, 0.56, 0.0}
\newcommand{\MyIn}[1]{{\color{inputcolor}{#1}}}
\newcommand{\InputT}{\color{inputcolor}$\blacktriangleright$}
\definecolor{outputcolor}{rgb}{1.0, 0.62, 0.0}
\newcommand{\MyOut}[1]{{\color{outputcolor}{#1}}}
\newcommand{\OutputT}{\color{outputcolor}$\blacktriangleright$}
\definecolor{failcolor}{rgb}{0.89, 0.15, 0.21}
\newcommand{\MyFail}[1]{{\color{failcolor}{#1}}}
\newcommand{\MyInT}[1]{{\color{inputcolor}{#1}}}
\newcommand{\MyOutT}[1]{{\color{outputcolor}{#1}}}
\newcommand{\MyOutTT}[1]{{\color{failcolor}{#1}}}
\newcommand{\InLineCode}[1]{{\texttt{#1}}}   
\newcommand{\BC}{\mathcal{L}}
\newcommand{\SLFi}{\text{(SL1)}}
\newcommand{\SLSe}{\text{(SL2)}}
\newcommand{\SLTh}{\text{(SL3)}}
\newcommand{\SLFo}{\text{(SL4)}}
\newcommand{\SLFiv}{\text{(SL5)}}
\newcommand{\SLSi}{\text{(SL6)}}
\newcommand{\SLSev}{\text{(SL7)}}
\newcommand{\step}{\text{phase}}
\newcommand{\steps}{\text{phases}}
\newcommand{\ppd}{\mathrm{ppd}}
\newcommand{\MN}{N}
\newcommand{\GUI}[1]{(#1)}
\newcommand{\cE}{a}  
\newcommand{\cM}{g}   
\newcommand{\cH}{c}  
\newcommand{\cT}{\tilde{g}}  
\newcommand{\Pkg}[1]{\textsf{#1}}
\newcommand{\GAP}{\Pkg{GAP}}
\newcommand{\Magma}{\Pkg{Magma}}
\newcommand\Defn[1]{\emph{\color{blue}#1}}
\newcommand\algoref[1]{\hyperref[#1]{\FuncSty{#1}}}
\title{Constructive Recognition of Special Linear Groups}
\author{
  Max Horn\footnote{RPTU Kaiserslautern-Landau, Germany.
    email: mhorn@rptu.de}
  \and
  Alice C. Niemeyer\footnote{RWTH Aachen University, Germany.
    email: alice.niemeyer@mathb.rwth-aachen.de}
  \and
  Cheryl E. Praeger\footnote{University of Western Australia, Australia.
    email: cheryl.praeger@uwa.edu.au}
  \and
  Daniel Rademacher\footnote{RWTH Aachen University, Germany.
    email: rademacher@art.rwth-aachen.de}
}
\date{April 2023}
\begin{document}

\maketitle

\begin{abstract}
We introduce a new constructive recognition  algorithm
for finite special linear groups in their natural representation. Given
a group $G$ generated by a set of $d\times d$ matrices
over a finite field $\F_q$,  known to be isomorphic
to the special linear group $\SL(d,q)$, the algorithm
computes a special generating set $S$ for $G$. These generators
enable efficient computations with the input group, including 
solving the word problem. Implemented in the computer algebra system 
\GAP, our algorithm outperforms existing state-of-the-art 
algorithms by a significant margin. A detailed complexity analysis 
of the algorithm will be presented in an upcoming publication.
\end{abstract}

\section{Introduction}

During the  inaugural international conference  ``Computational Group
Theory''  held  in  Ober\-wolf\-ach   in  1988,  Neub\"user  highlighted  the
necessity for  efficient algorithms for working with matrix groups
over finite fields.  He challenged the  audience to design an 
algorithm to determine whether a given finite
set  of $d\times d$  matrices over  a finite
field with $q$ elements generates a group containing the special linear
group $\SL(d,q)$.

Neub\"user's initial  query was answered  in 1992 by Peter  Neumann and
the third  author \cite{NeuPrae}.  During  the three decades  since,
a research effort called \emph{the Matrix
Group  Recognition Project} to design algorithms for matrix group computation
has  seen  substantial progress, \cite{AMGR}.  Numerous
algorithms  have been  developed and  implemented in  computer algebra
systems  such as  \GAP~\cite{GAP} and  \Magma~\cite{Magma}, enabling  efficient manipulation  of
matrix  groups of  dimensions as  large as  several thousand.

The  development   of  algorithms   for  matrix  groups   has followed
two main approaches, often referred to as the soluble radical approach and the
Composition Tree approach, the latter being the focus here. Both approaches
have yielded
significant  breakthroughs,  notably  Babai, Beals and Seress  \cite{BBS}
show that the soluble radical approach computes a data
structure which allows to compute with
a matrix group given abstractly as a black box group in 
 randomized polynomial time, using oracles for factoring and discrete log.
 The Composition Tree method is described in \cite{BHGO,GAPCompTree}
and computes a data structure known  as a \emph{Composition Tree} for a given
finite  matrix  group  $G$.  This   composition  tree  serves  as  the
fundamental  tool for  further computations  with $G$,  and exhibits  a
composition series of $G$. The complexity of this method
has been shown to be 
polynomial the degree of the matrix group
by Holt et al \cite{MainCompositionTree} if the  composition tree  avoids
certain  finite  simple  groups.

Given this remarkable progress the reader might wonder why 
we revisit a topic
for which efficient algorithms already exist.  Having efficient
algorithms available is particularly crucial for groups closely
related to the composition factors. As the classical groups yield
infinite families of finite simple groups, improvements in algorithms
for these groups  cascade throughout the entire composition tree, amplifying their impact 
across the structure.  Algorithms to compute a composition tree face two challenges
when confronted with a matrix group: firstly, resolving the \emph{naming
problem}, which involves determining the finite simple group to which
the given group is isomorphic, and secondly, tackling the \emph{constructive
recognition problem}. The constructive recognition problem takes as
input a subgroup $G$ of $\GL(d,q)$ known to be isomorphic to a
particular almost simple group and aims to find an isomorphism to a
standard copy of the almost simple group.  Solutions to constructive
recognition problems facilitate computations with the input group.

The  constructive recognition problem we treat in this paper
takes as input a subgroup $G$ of $\GL(d,q)$ known to contain $\SL(d,q)$
and aims to find an isomorphism to a  standard copy of $\SL(d,q)$. More
precisely, we aim to compute a particular set of matrices, 
called \emph{standard generators}
in the standard copy of $\SL(d,q)$ as words in the generators of $G$.

Several algorithms have  been developed to address  the naming problem
for  classical groups,  including \cite{NeuPrae,NCR}.  The
initial  algorithms  devised  to solve  the  constructive  recognition
problem  for   classical  groups  date  back   to  1995 \cite{CRCELLER}.
Subsequently,  numerous algorithms  have  been proposed  \cite{CRODD, SL2, 
BlackBoxInvolutions, KS},
with the current state-of-the-art algorithm being the one developed by
Dietrich,  Leedham-Green, L\"ubeck  and  O'Brien \cite{CREVEN}.   This
algorithm descends recursively along a  chain of subgroups, seeking at
each  step to  construct  a  classical subgroup  of  roughly half  the
dimension.

Over a decade  ago, our colleagues \'Akos Seress  and Max Neunh\"offer
proposed a novel approach to  the constructive recognition problem for
classical  groups,  based   on  the  concept  of   elements  that  act
irreducibly on a subspace of dimension roughly $\log(d)$ and fix
pointwise a complementary subspace.
We call such elements \emph{stingray elements}.
\'Akos Seress asked
the second and third authors to estimate the proportion of such elements in finite
classical groups, see \cite{SR1}. Moreover, he worked with Yal\c{c}\i{}nkaya and the third author
on estimating the probability that a stingray element and its conjugate would generate a classical group
of smaller dimension \cite{PSY}.

Despite their  promising  ideas,  further development  was
halted  following Seress's  passing  and  Neunh\"offer's departure  from
academia. We believed their ideas to be very promising and in a series
of papers \cite{SR1,SR2,SR3,C1,C9,GEN} have paved the way for a complexity analysis of
the algorithms presented here.

We  present in  this paper  a new  constructive recognition  algorithm
based on the theory of stingray elements
tailored  for the  special linear  groups in degree $d\geq 4$. Our algorithm
demonstrates an unexpected  speed and practical
efficiency, outperforming the current  state-of-the-art algorithm by a
significant margin. In particular, we prove the following theorem.

\begin{theorem}\label{MainResultsSummary}
Let $\langle X \rangle = G = \SL(d,q)$ and $\epsilon \in (0,1)$.
\algoref{StandardGenerators} is a one-sided Monte Carlo algorithm which
given input $G$ and $\epsilon$ outputs with probability at least $1-
\epsilon$ an MSLP $\slp$ and base change matrix $\BC$ such that $\slp$
evaluates from $X^\BC$ to the standard generators of $G^\BC$.
\end{theorem}

A detailed complexity analysis of our new algorithm is almost complete.
We have also successfully adapted our method to the other families of
classical groups. All of this will be presented in follow-up publications.
In particular, we hope to establish the following result.
\begin{conjecture}
The complexity of \algoref{StandardGenerators} as stated in
\cref{MainResultsSummary} is 
\[
\mathcal{O}(d^3 \log(d) + d^2 \log(d) \log(\log(d)) \log(q) + \frac{\log(d)}{\log(\log(d))} \xi + \zeta(q))
\]
 where $\xi$ denotes an upper
 bound on the number 
of field operations for computing a random element in $\SL(d,q)$ and $\zeta(q)$ denotes an 
upper bound on the number of field operations for constructively recognising $\SL(4,q)$.
\end{conjecture}

\section*{Acknowledgements}

The first, second and fourth authors acknowledge support by the German Research Foundation (DFG) -- Project-ID 286237555 -- within the SFB-TRR 195 ``Symbolic Tools in Mathematics
and their Applications''.
The second and third author acknowledge the support of
Australian Research Council Discovery program grant DP190100450 
and the third author acknowledges the support of
Australian Research Council Discovery program grant DP230101268.

Note that the content of this publication is a condensed version
of some of the chapters in the fourth author's Doctoral dissertation.

The fourth author thanks Eamonn O'Brien for his hospitality during a research stay at the University of Auckland, and many relevant discussions.

\section{Preliminaries}

Throughout this paper, we use the following notations:
\begin{itemize}
\item $q = p^f$ denotes a prime power.
\item $\F_p$ and $\F_q$ are finite fields of order $p$ resp.\ $q$.
\item $(\prel_1, \dotsc, \prel_f)$ is an $\F_p$-basis of $\F_q$.
\item $n$ and $d$ are natural numbers satisfying $n\leq d$.
\item By $E_{i,j}(\lambda)$ we denote an elementary matrix with entry $(i,j)$ equal to $\lambda$.
\end{itemize}

\begin{definition}
Let $V$ be a $d$-dimensional vector space with basis $\mathcal{B} = \{
v_1, \dotsc , v_d \}$. 
\begin{itemize}
\item 
$\langle \cdot \mid \cdot \rangle$ denotes the
\Defn{standard scalar product} on $V$ with respect to $\mathcal{B}$,
i.e., 
$\langle \sum \lambda_i v_i, \sum \mu_i v_i \rangle = \sum \lambda_i \mu_i$.

\item \label{transvection}
For $v,w \in V$ linearly independent, $T_{v,w}$ denotes the \Defn{transvection} $x \mapsto
x + \langle x \mid v \rangle w$. 
\end{itemize}
\end{definition}
Note that the elementary matrix
$E_{i,j}(\lambda)$ is equal to the matrix of the transvection $T_{\lambda v_i,
v_j}$ with respect to $\mathcal{B}$.

\begin{definition}\label{IteratedLogarithm}
Let $k$ be a non-negative integer. The \Defn{iterated logarithm} $\log*(k)$ is defined recursively as follows
\[
\log^*(k) \coloneqq
\begin{cases}
0, & \text{ if } k \leq 1 \text{ and} \\
1 + \log^*( \log(k) ), & \text{ if } k> 1.
\end{cases}
\]
\end{definition}

\begin{definition}\label{StandardGeneratorsSL}
Let $S \subset \SL(d,q)$. Then $S$ is a set of \Defn{standard
generators} for $\SL(d, q)$ if $S$ is conjugate to the following set
consisting of $2f + 2$ elements:
\begin{itemize}
\item $E_{1,2}(\prel_i)$ for $1 \leq i \leq f$,
\item $E_{2,1}(\prel_i)$ for $1 \leq i \leq f$,
\item a permutation matrix $z_1$ corresponding to the permutation $(1, d, d - 1, \dotsc, 2)$ with the entry $(z_1)_{1, d}$ changed to $-1$ if $d$ is even and
\item a permutation matrix $z_2$ corresponding to the permutation $(2, d, d - 1, \dotsc, 3)$ with the entry $(z_2)_{2, d}$ changed to $-1$ if $d$ is odd.
\end{itemize}
\end{definition}

The $(-1)$-entries are introduced to ensure that the elements are contained in $\SL(n, q)$.

A notation we use frequently throughout is the following. 

\begin{definition}\label{StandardEmbedded}
Let $G \leq \GL(d,q)$ and $U \leq G$. Then $U$ is \Defn{stingray embedded
of degree $n$ in $G$} for $n \leq d$ if there exists a group
$H \leq \GL(n,q)$, a matrix $\BC \in \GL(d,q)$ and an
isomorphism $\sigma_\BC$ from $H$ into $U^\BC$ such that
\[
\sigma_{\BC} \colon H \to U^\BC,\ a \mapsto \diag(a, I_{d-n}) =
\begin{pmatrix}
a & 0 \\
0 &  I_{d-n} \\
\end{pmatrix}.
\]
We denote a stingray embedding by
\[
U^\BC =
\begin{pmatrix}
H & 0 \\
0 &  I_{d-n} \\
\end{pmatrix} \leq G^\BC
\]
where $G^\BC$ is the conjugation by $\BC$, i.e.\ $G^\BC = \{ \BC^{-1} g \BC \mid g \in G \}$.
\end{definition}

\begin{remark} \label{Naming}
A naming algorithm for a family of group (e.g. the family of all classical
groups) takes as input a subset $X\subset \GL(d,q)$ and decides whether the
group $G=\langle X \rangle$ generated by $X$ is isomorphic to a member of that
family. This is also sometimes referred to as ``non-constructive
recognition'', as the group $G$ is ``recognized'' to have a specific
isomorphism type, but without actually constructing an isomorphism.

Some of the algorithms presented in this paper 
need to decide if $G$ is equal (and not just isomorphic) to $\SL(d,q)$. This
can certainly be achieved using a naming algorithm for classical groups such
as the ones presented in \cite{NeuPrae,NCR}.
\end{remark}

Finally, we remark on some conventions we employ to enhance the readability
of the algorithms we present.

\begin{remark}\label{RandomisedAlgos}
  The algorithms we present are randomised algorithms. They rely on
  a procedure to select elements from a group $G$ which are nearly
  uniformly distributed and independent. Such an algorithm
  has been introduced by \cite{RandomisedAlgorithms}. A practical version
  is presented in \cite{RandomElements}.
\end{remark}

\begin{remark}\label{GlobalCounter}
Some randomised algorithms described in this publication call other
randomised algorithms. In order to bound the overall number of computed
random elements, we use a ``global'' variable $\MN$. That means if a
randomised algorithm $A$ uses the variable $\MN$ and calls a randomised
algorithm $B$ with input $\MN$, then $A$ and $B$ ``share'' the identical
$\MN$, i.e.\ if $\MN$ is decreased by $B$ and $B$ returns $\MN$, then
the $\MN$ of $A$ is ``updated'' and thus also decreased. The variable
$\MN$ is very useful in this context to bound the maximal number of
random elements computed by the input $\MN$.
\end{remark}

\begin{remark}\label{OutputCheck}
Some algorithms of this publication call other randomised algorithms
which can either return a valid output or $\InLineCode{fail}$. Whether
an algorithm returns valid output or $\InLineCode{fail}$ needs to be
checked before the output is given as an argument to the next function.
If a randomised algorithm $A$ calls a randomised algorithm $B$ and $B$
returns $\InLineCode{fail}$, then $A$ also terminates and returns
$\InLineCode{fail}$. The check whether $B$ returns $\InLineCode{fail}$
requires an if statement in an implementation. To provide better
readability of algorithms we sometimes omit this if-statement and refer
to this remark instead.
\end{remark}

Finally, we  present proofs for  the correctness of the  algorithms we
present.  However, the  complexity analysis of our  algorithms will be
published in a forthcoming paper. The  reason for this is that many of
the  algorithms  we present  rely  on  various  other results  in  the
literature and  drawing all  of these together  to present  a complete
complexity analysis would increase the length of this paper too much.
\section{MSLP}

Many of the algorithms in the  literature store elements of a group as
words in a given generating set in an efficient way, called a straight
line program, namely as a word  in certain subwords.  In this section,
we describe a modification to straight line programs, which also records the
memory usage of a straight line program.

\begin{definition} \label{MSLP}
  Let $G=\langle X \rangle$ be a group,
  $\ssize \in \mathbb{N}_0$ and let $\slist =
[\me_1,\dotsc,\me_\ssize]$ be an ordered list of $\ssize$ elements of
$X$ such that $\langle \me_1,\dotsc,\me_\ssize\rangle = G$. A
   \Defn{straight-line program with memory (MSLP)} is a sequence
$\slp = [\ins_1,\dotsc,\ins_\slength]$ of \Defn{instructions}, where for
$1
\leq r \leq \slength$ and $i,j,k \in \{1,\dotsc,\ssize\}$
\begin{enumerate}[label={(\roman*)}]
\item $\me_k \leftarrow \me_i$. This instruction stores $\me_i$ in the list $\slist$ in slot $k$.
\item $\me_k \leftarrow \me_i \cdot \me_j$. This instruction stores $\me_i \cdot \me_j$ in the list $\slist$ in slot $k$.
\item $\me_k \leftarrow \me_i^{-1}$. This instruction stores $\me_i^{-1}$ in the list $\slist$ in slot $k$.
\item Show($A$) where $A \subseteq \{1,\dotsc,\ssize \}$. The slots specified by $A$ are displayed.
\end{enumerate}

The number $\ssize \in \mathbb{N}_0$ is the \Defn{memory quota} of
$\slp$ and $\slp$ is a $\ssize$-MSLP. The number $\slength \in
\mathbb{N}_0$ is the length of $\slp$. The empty sequence is permitted
with length 0.
\end{definition}

The \Defn{evaluation of an MSLP}
$\slp$ amounts to executing its instructions recursively. 
Suppose $G$ is a group and $X$ is a sequence of elements of $G$. If 
the memory $\slist$ is initialised with the sequence $X$ when
$\slp$ is evaluated, then
we say that $\slp$ is \Defn{evaluated in $X$}.
The instruction
Show($A$) can be employed to output multiple elements of $G$ stored in
one of the $\ssize$ memory slots specified by $A$. For more details,
see \cite[Section~2]{MSLP2024}.

In the evaluation process, $\slist$ is used as memory for the elements
that are needed. The instruction $(i)$ of \cref{MSLP} can be used
to overwrite a slot and minimize the memory quota. The idea of an MSLP
is to reduce the memory required for evaluating a particular word in a
group by repeatedly overwriting memory slots which are no longer used.
The length $\slength$ of an MSLP describes the number of operations
during the entire evaluation.

\begin{remark}
The instructions of \cref{MSLP} use elements of
$\{1,\dotsc,\ssize \}$. The memory $\slist$ is secondary in the
description. This implies that an MSLP is independent of the group,
i.e.\ $\Eval_{\slist}(\slp)$ can be computed for every $\slist \in
G^\ssize$ and every group $G$. Hence, it is possible to encode an
element as a word in one group and to evaluate the constructed SLP in
another group.
\end{remark}

\section{Outline of the algorithm} \label{OutlineOfTheAlgorithm}
The goal of this section is to present an overview of the algorithm to
recognise a special linear group constructively. The overarching algorithm is
Algorithm \StandardGenerators.
This algorithm in turn relies on several subprocedures whose purpose is
discussed here.

The input of Algorithm \StandardGenerators\ is a set $X$
with $X \subseteq \GL(d,q)$
 with $d > 4$ such that $G :=
\langle X \rangle = \SL(d,q)$ is a special linear group in its natural
representation.
The output is a
matrix $\BC \in \GL(d,q)$ and an MSLP $\slp$ such that
when $\slp$ is evaluated in $X^\BC$,
the output of $\slp$ is a particular generating set of $\SL(d,q)^\BC$,
called the \Defn{standard generators} of $\SL(d,q)^\BC$.
The definition of standard generators is given in \cref{StandardGeneratorsSL}.

Algorithm \StandardGenerators\ calls
three basic subalgorithms, namely:
\begin{enumerate}[label={(\arabic*)}]
\item Algorithm \GoingDown\ is a one-sided Monte Carlo algorithm to construct a stingray embedded subgroup $\SL(4,q)$ of $G$.
\item Algorithm \BaseCase\ is a one-sided Monte Carlo algorithm to recognise $\SL(4,q)$ constructively.
\item Algorithm \GoingUp\ is a one-sided Monte Carlo algorithm to construct standard generators of $G$ using the constructed standard generators of a stingray embedded subgroup $\SL(4,q)$.
\end{enumerate}

\subsection*{\GoingDown\ algorithm}
The input of Algorithm \GoingDown\ is the set $X$ described above.
The output
 is a matrix $\BC\in \GL(d,q)$, an MSLP $\slp$ and a subgroup
$\langle X_U \rangle = U \leq G$ with $U \cong \SL(4,q)$ which is stingray embedded, that is
\[
U^\BC =
\begin{pmatrix}
    \SL(4,q) & 0 \\
    0 & I_{d-4}
\end{pmatrix} .
\]
Moreover, evaluating the MSLP $\slp$ in $X$ yields the generators $X_U$ of $U$.

Algorithm \GoingDown\ is a randomised algorithm and
relies on finding so-called \Defn{stingray elements}
(see \cref{StingrayElement}) by random selection of
elements of $G$. Therefore there is an additional input $\MN \in \N$
which is used as an upper bound for the maximal number of
random selections. If the limit $N$ is exceeded, then Algorithm \GoingDown\
terminates returning $\InLineCode{fail}$. Moreover,
Algorithm \GoingDown\ is a one-sided Monte-Carlo algorithm, therefore we
also discuss how to choose $\MN \in \N$ for a given $0 < \epsilon < 1$
such that the \GoingDown\ algorithm succeeds with probability $1 -
\epsilon$.

Algorithm \GoingDown\ starts by setting $U_1:=G$, $d_1:=d$, $\BC_1:=I_d$.
then repeatedly calls the subroutine \algoref{GoingDownBasicStep} via
\[
U_{i+1}, d_{i+1}, \BC_{i+1}, \slp_{i+1}, \MN' := \algoref{GoingDownBasicStep}(U_i, d_i, \BC_i, \MN),\]
where for $i>1$ we assume that $U_i$ is stringray embedded of degree $d_i$ in $U_{i-1}$. The output satisfies that
$U_{i+1}$ is stingray embedded of degree
$d_{i+1}$ (and hence in $G$). Moreover, 
$d_{i+1} \leq 4 \lceil \log(d_i) \rceil$. Next, $\BC_{i+1} \in \GL(d,q)$ is a
base change matrix, and $\slp_{i+1}$ is an MSLP such that
when $\slp_{i+1}$ is evaluated in the generators of $U_i$
it returns generators of $U_{i+1}$. Finally, $\MN'$ is equal to
$\MN$ minus the number of random selection performed by the subroutine, and is used as $\MN$ for the next step.
In summary, we have
\[
U_{i+1}^{\BC_{i+1}} = \begin{pmatrix}
    \SL(d_{i+1},q) & 0 \\
    0 & I_{d-d_{i+1}}
\end{pmatrix} \leq G.
\]
The subroutine \algoref{GoingDownBasicStep} 
is repeated until a group isomorphic to $\SL(4,q)$ is reached.
At this point we set $k := i+1$ and return the descending chain
\[
\SL(4,q)\cong U_k \leq U_{k-1} \leq \dotsc \leq U_2 \leq U_1 = G = \SL(d,q).
\]
Since $d_{i+1} \leq 4 \lceil \log(d_i) \rceil$ holds,
$k$ is roughly $\log^*(d)$, where $log^*$ denotes
the iterated logarithm, see \cref{IteratedLogarithm}.
The descending chain thus computed is a
\Defn{descending recognition chain} of $G$:
\begin{definition}\label{DescendingRecognitionChain}
Let $\SL(d,q)$ be a special linear group in its natural representation.
Then a \Defn{descending recognition chain} of $\SL(d,q)$
is a descending chain of subgroups
\[ U_k \leq U_{k-1} \leq \dotsc \leq U_2 \leq U_1 = \SL(d,q), \]
where $U_i \cong \SL(d_i,q)$ is stingray embedded $U_{i-1}$ (and thus in $\SL(d,q)$) for $i>1$, we have $d_{i+1}
\leq 4 \lceil \log(d_{i}) \rceil$ for all $1 \leq i < k$, and $d_k=4$. 
\end{definition} 

The basic step uses stingray elements (defined in
\cref{sec:StingrayElements}) as follows: In $U_i$ we seek two
stingray elements $s_1, s_2 \in U_i$, using
\algoref{FindStingrayElement} described in \cref{sec:StingrayElements}.
The elements are chosen such that $\langle s_1, s_2 \rangle \cong
\SL(d_{i+1},q)$ holds with high probability. Then
$\langle s_1, s_2 \rangle$ is automatically
stingray embedded $U_i$ (and thus in $\SL(d,q)$),
and hence we can define  $U_{i+1} := \langle s_1, s_2\rangle$.
The base change matrix to achieve this block structure is also computed
by the algorithms of \cref{sec:StingrayElements}. 

Since the basic
step relies on finding stingray elements by randomised procedure,
the \GoingDown\ algorithm is a randomised algorithm. In fact, the
\GoingDown\ algorithm returns $\InLineCode{fail}$ if it does not succeed after $\MN$
random element selections of elements from $U_i$.
The probability that the \GoingDown\
algorithm succeeds relies completely on $\MN$. We analyse this
dependency in a forthcoming paper.

\begin{remark}
  It is not possible to reach $\SL(2,q)$ using
  \algoref{GoingDownBasicStep} 
because the degrees $d_i$ of $U_{i} \cong \SL(d_i,q)$ are too
small to find stingray elements generating special linear
groups. Therefore, we terminate   \algoref{GoingDownBasicStep} 
when we
encounter $\SL(4,q)$. See \cref{StingrayNotToDimension2} for details.
\end{remark}

\begin{remark}
Note that $\SL(3,q)$ in its natural representation is also not handled
by our algorithm, but for this case one can e.g. use \cite{CRSL3}.
\end{remark}

\subsection*{\BaseCase\ algorithm}

The \BaseCase\ algorithm is the second subalgorithm of the
\StandardGenerators\ algorithm. Let $G := \SL(d,q)$ be the input of
\StandardGenerators. Then the input of the \BaseCase\ algorithm is
a generating set $X$ of a group $\SL(4,q) \leq G$. Moreover, a
base change matrix $\BC
\in \GL(d,q)$ is known such that $\langle X \rangle^\BC$ is
stingray embedded with degree $4$ in $G^\BC$. Since the \BaseCase\
algorithm is also randomised we again use $\MN$ as additional input to
restrict the maximal number of random element selections to $\MN$. The
output of the \BaseCase\ algorithm is an MSLP $\slp$ and a base
change matrix $\BC' \in \GL(d,q)$ such that if $\slp$ is evaluated in
$X^{\BC'}$, the output are the standard generators of
$\SL(2,q)^{\BC'}$ as in \cref{StandardGeneratorsSL}.

To do this,  the \BaseCase\ algorithm uses a simplified version of the
\GoingDown\ algorithm of \cite{CREVEN,CRODD} to find a suitable $\SL(2,q)$
subgroup. It then calls an efficient constructive recognition 
algorithm for $\SL(2,q)$,
e.g.\ the algorithm described in \cite{CRSL2}.

Properties of the \BaseCase\ algorithms e.g.\ whether it is 
randomised or  requires a
discrete logarithm oracle depend on the choice of
the constructive recognition algorithm for the base case group. Note
that a constructive recognition algorithm employed for base case groups
can readily be exchanged by another should a more efficient 
algorithm 
 become available.

The state of the art algorithm for constructive recognition of
$\SL(2,q)$ is given in \cite{CRSL2} which is a randomised algorithm and
uses the discrete logarithm oracle. So far, it is unknown whether it is
possible to recognise $\SL(2,q)$ constructively without the discrete
logarithm oracle.

As for the \GoingDown\ algorithm the success of the \BaseCase\
algorithm depends on the choice of $\MN$ and for a given $\epsilon \in
(0,1)$ we can select $\MN$ such that the \BaseCase\ algorithm succeeds
with probability at least $1-\epsilon$. The choice of $\MN$ of the
\BaseCase\ algorithm is not discussed in this publication, instead we
refer to the literature, see \cite{CRSL2}.

\subsection*{\GoingUp\ algorithm}
The last subalgorithm of \StandardGenerators\ is the \GoingUp\
algorithm. The input of this algorithm is $X$ with $\langle X \rangle =
G = \SL(d,q)$ in its natural representation and the standard generating
set $S$ of a stingray embedded subgroup $\langle S \rangle = H
\leq G^\BC$, where $H$ is a base case group 
for
a known base change matrix $\BC \in \GL(d,q)$. Moreover, 
MSLP are known to express the standard
generators $S$ of $H$ 
as words in $X^\BC$, i.e.\ a
constructive recognition algorithm has been used on $H$. The \GoingUp\
algorithm is also randomised and, therefore, we use $\MN$ as input to
allow at most $\MN$ random element selections. The outputs of the
\GoingUp\ algorithm are an MSLP $\slp$ and a base change matrix $\BC'$
such that, if $\slp$ is evaluated in $(S \cup X)^{\BC \BC'}$, the
output are the standard generators of $G^{\BC \BC'} = \SL(d,q)$.

Similar to the \GoingDown\ algorithm, the \GoingUp\ algorithm uses a
basic step repeatedly until the standard generators of $G$ have been
constructed. The input of the \GoingUp\ basic step are $X$ and
$\widetilde{S}$ with $\langle \widetilde{S} \rangle = \widetilde{H} \leq
G^{\widetilde{\BC}}$ where $\widetilde{H} \cong \SL(n,q)$ is stingray
embedded 
into $\GL(d,q)$ 
for a known base change matrix $\widetilde{\BC} \in \GL(d,q)$.
The standard generators $\widetilde{S}$ of $\widetilde{H}$ can be
written as words in $X^{\widetilde{\BC}}$ and are encoded in an MSLP
$\widetilde{\slp}$. Moreover, the base change matrix $\widetilde{\BC}$
as well as $\MN$ to control the maximal number of random element
selections are additional inputs of the \GoingUp\ basic step. The output
of the \GoingUp\ basic step is a base change matrix $\BC \in \GL(d,q)$
and MSLP which evaluates in $\widetilde{S} \cup X$ to the standard
generators of another stingray embedded special linear subgroup $K$ of
$G^\BC$. Since the output of the \GoingUp\ basic step is an MSLP
evaluating to the standard generators of a subgroup of $G$ we denote
this by saying that the input of the $i$-th basic step is given by $X$,
$S_i, \BC_i$ and $\MN$ and the output group is given by $S_{i+1}$ and
$\BC_{i+1}$ with $\langle S_{i+1} \rangle= H_{\GUI{i+1}}$ resulting in
\[
S_{i+1},\BC_{i+1} := \algoref{GoingUpStep}(X,S_i,\BC_i, \MN).
\]
Note that this is not completely accurate as the output is not directly
$S_{i+1}$ but rather is only an MSLP which evaluates to the standard
generators $S_{i+1}$ of $H_{\GUI{i+1}}$. We ignore this small inaccuracy
since we know that the MSLP evaluates to $S_{i+1}$. So far we only noted
that $H_{\GUI{i+1}} \leq G^{\BC_{i+1}}$ holds but we know more about the
output of the \GoingUp\ basic step which is $H_{\GUI{i}}^{\BC_i^{-1}}
\leq H_{\GUI{i+1}}^{\BC_{i+1}^{-1}} \leq G$ and $H_{\GUI{i+1}} \cong
\SL(n_{i+1},q)$. By setting $H_{\GUI{0}} = H$ this yields a chain of
subgroups called an \Defn{ascending recognition chain}
\[
H = H_{\GUI{0}}^{\BC_0^{-1}}
\leq H_{\GUI{1}}^{\BC_1^{-1}}
\leq \dotsc
\leq H_{\GUI{\ell-1}}^{\BC_{\ell-1}^{-1}}
\leq H_{\GUI{\ell}}^{\BC_{\ell}^{-1}}
= G^{\BC_{\ell}^{-1}}.
\]
In most cases the length of an ascending recognition chain is larger
than the length of the descending recognition chain. Since the standard
generators of each $H_{\GUI{i}}$ are known and $H_{\GUI{\ell}} = G$, the
standard generators of $G$ are constructed by the \GoingUp\ algorithm.

The \GoingUp\ basic step is more complicated than the \GoingDown\
basic step and consists of seven $\steps$. Therefore, we are not
discussing the details of the \GoingUp\ basic step in this section and
refer to a detailed description in \cref{GoingUpSL}. Instead we continue
this section by displaying the input and output of the \GoingUp\ basic
step in matrix form.

Each of the $H_{\GUI{i}}$ is a stingray embedded special linear group, i.e.\
\[
H_{\GUI{i}} = \begin{pmatrix}
    \SL(n_i,q) & 0 \\
    0 & I_{d-n}
\end{pmatrix} \leq G^{\BC_i}
\]
and the standard generators of $H_{\GUI{i}}$ are known. The \GoingUp\
basic step then proceeds roughly as follows. We identify an element $c
\in G^{\BC_i}$ such that
\[
\langle H, H^c \rangle := \begin{pmatrix}
    \SL(\mathcal{N}(d,n),q) & 0 \\
    0 & I_{d-\mathcal{N}(d,n)}
\end{pmatrix} \leq G^{\BC_{i+1}}
\]
for a base change matrix $\BC_{i+1} \in \GL(d,q)$ such that $\langle H,
H^c \rangle$ is stingray embedded as displayed above. The function
$\mathcal{N}(d,n)$ is given by $\min\{ 2n-1, d \}$.

Clearly, $H \leq \langle H, H^c \rangle$ and the standard generators for
$H$ are known. Using this knowledge and $c$ we can very carefully
construct standard generators for $\langle H, H^c \rangle \cong
\SL(\mathcal{N}(d,n),q)$. Then we set $H_{\GUI{i+1}} := \langle H, H^c
\rangle$ and the standard generators for $H_{\GUI{i+1}}$ can be derived.
Since we want to avoid as many matrix operations as possible the
standard generators of $H_{\GUI{i+1}}$ are only encoded in an MSLP
instead of performing the actual matrix computations. As for the
\GoingDown\ algorithm we introduce a notion for a ascending recognition
chain in the next definition.

\begin{definition}\label{AscendingRecognitionChain}
Let $\langle X \rangle = G = \SL(d,q)$. Then a ascending chain of subgroups of $G$ with
\[
H = H_{\GUI{0}} \leq H_{\GUI{1}} \dotsc \leq H_{\GUI{\ell-1}} \leq H_{\GUI{\ell}} = G,
\]
where $H_{\GUI{i+1}} \cong \SL(n_{i+1},q)$ of the same type as $G$ is
stingray embedded in $G$ and $H$ is a base case group, is a
\Defn{ascending recognition chain}.
\end{definition}

\subsection*{\StandardGenerators\ algorithm}
In this section we describe the fundamental structure of the
constructive recognition algorithm of this publication in a single
algorithm \StandardGenerators. The \StandardGenerators\ algorithm
uses the three subalgorithms \GoingDown, \BaseCase\ and \GoingUp\
outlined in the previous sections. The input and output of the
\StandardGenerators\ algorithm are as described in the introduction of
\cref{OutlineOfTheAlgorithm}.

The \StandardGenerators\ algorithm deals as follows with the input
$\langle X \rangle = G \coloneqq \SL(d,q)$:
\begin{enumerate}[label={(\arabic*)}]
    \item If $G$ is a base case group, then call the \BaseCase\ algorithm on $G$ and return the output.
    \item Call the \GoingDown\ algorithm with input $G$ to construct a base case group $U$ with $U \leq G$.
    \item Call the \BaseCase\ algorithm with input $U$ to recognise $U$ constructively.
    \item Call the \GoingUp\ algorithm with input $U$ to express the standard generators of $G$ as words in $X$ in an MSLP.
\end{enumerate}

The \StandardGenerators\ algorithm can be displayed in pseudo code as follows.

\begin{algorithm}[H]
\caption{StandardGenerators}
\label{StandardGenerators}
\small{
{\nlnonumber
\IOKw{
\begin{tabular}{@{}ll}
        \KwIn{}& {\InputT} {\normalfont $\MyIn{\langle X \rangle = G} = \SL(d,q)$ in its natural representation, with $d\geq 4$} \\
        & {\InputT} {\normalfont $(\MyIn{\MN_1}, \MyIn{\MN_2}, \MyIn{\MN_3}) \in \N$} \\
        \KwOut{}& \MyFail{fail} OR $\MyOut{(\BC, \slp)}$ where \\
        & {\OutputT} {\normalfont $\MyOut{\BC} \in \GL(d,q)$ is a base change matrix and} \\
        & {\OutputT} {\normalfont $\MyOut{\slp}$ is an MSLP such that if $\MyOut{\slp}$ is evaluated on $\MyIn{X}^{\MyOut{\BC}}$, then the} \\
        & {\normalfont standard generators of $\MyIn{G}^{\MyOut{\BC}}$ are computed} \\
\end{tabular}
} \\
}
\nlnonumber
\LinesNumbered
\setcounter{AlgoLine}{0}
\Begin($\FuncSty{StandardGenerators} {(} \MyInT{G, (\MN_1, \MN_2, \MN_3)} {)}$)
{
	$U, \BC_1, \slp_1 := \FuncSty{GoingDown}(G,\MN_1)$ \;
	$\slp_2, \BC_2 := \algoref{BaseCase}(U,\MN_2)$ \;
	$\slp_3, \BC_3 := \algoref{GoingUp}(G^{\BC_1 \BC_2},U^{\BC_2},\MN_3)$ \;
	Combine $\slp_1$, $\slp_2$ and $\slp_3$ into an single MSLP $\slp$ \;
	$\BC := \BC_1 \BC_2 \BC_3$ \;
	\Return $\MyOutT{(\slp,\BC)}$ \;
}
}
\end{algorithm}

The algorithm \StandardGenerators\ yields the structure for the
constructive recognition algorithms of this publication. In the next
three sections we present the details for the subalgorithms
\GoingDown, \BaseCase\ and \GoingUp.

We finish this section by stating the main theorem of this publication.

\begin{theorem}\label{MainResultsSummary}
Let $\langle X \rangle = G = \SL(d,q)$ and $\epsilon \in (0,1)$.
\algoref{StandardGenerators} is a one-sided Monte Carlo algorithm which
given input $G$ and $\epsilon$ outputs with probability at least $1-
\epsilon$ an MSLP $\slp$ and base change matrix $\BC$ such that $\slp$
evaluates from $X^\BC$ to the standard generators of $G^\BC$.
\end{theorem}

We plan to analyse the overall complexity of this algorithm in a follow-up
paper, hopefully establishing the following:
\begin{conjecture}
The complexity of \algoref{StandardGenerators} as stated in
\cref{MainResultsSummary} is 
\[
\mathcal{O}(d^3 \log(d) + d^2 \log(d) \log(\log(d)) \log(q) + \frac{\log(d)}{\log(\log(d))} \xi + \zeta(q))
\]
 where $\xi$ denotes an upper
 bound on the number 
of field operations for computing a random element in $\SL(d,q)$ and $\zeta(q)$ denotes an 
upper bound on the number of field operations for constructively recognising $\SL(4,q)$.
\end{conjecture}

\section{Stingray elements} \label{sec:StingrayElements}
In this section stingray elements are defined and general properties of
stingray elements are summarised. We start with the definition of
stingray elements and ppd stingray elements.

\begin{definition}\label{PrimitivePrimeDivisor}
Let $\mathfrak{a},\mathfrak{b} \in \N$ with $\mathfrak{a},\mathfrak{b} 
> 1$. A prime $r$ which divides $(\mathfrak{a}^\mathfrak{b}  -1)$ is a
\Defn{primitive prime divisor} (or \Defn{ppd} for short) of
$\mathfrak{a}^\mathfrak{b} -1$ if $r \nmid \mathfrak{a}^i -1$ for all $i
\in \N$ with $i < \mathfrak{b}$.

Let $q=p^f$ be a prime power and $d,m \in \N$. An element $g \in \GL(d,
q)$ is a \Defn{$\ppd(d, q; m)$-element} if there is a primitive prime
divisor $r$ of $q^m - 1$ such that $r$ divides the order of $g$.
Define $\Phi(m,q)$ to be the product of all primitive prime divisors
of $q^m-1$, counting their multiplicities.
\end{definition}

\begin{definition}\label{StingrayElement}
  An element $s \in \GL(d,q)$
	is an \Defn{$m$-stingray element}
  for $1< m \le d$, if $s$ has a $(d-m)$-dimensional
$1$-eigenspace $\stingt_s=\ker(s-1)$ and $s$ acts irreducibly on a complementary
invariant subspace $\stingb_s=\text{\rm im}(s-1) \leq \F_q^d$ of dimension $m$. The space
$W_s$ is the \Defn{stingray body} and $E_s$ is the \Defn{stingray tail}.
A stingray element $s\in \GL(d,q)$ is a \Defn{ppd $m$-stingray element}
if $s$ a $\ppd(d,q;m)$-element.
\end{definition}

\begin{example}\label{ex:StingrayElement}
	Consider the following matrix in $\GL(10,5)$:
\[
\scriptsize\arraycolsep=2pt
\begin{pmatrix}
4 & 1 & 3 & 3 & 2 & 1 & 2 & 4 & 0 & 4 \\
3 & 3 & 4 & 4 & 3 & 4 & 1 & 0 & 3 & 2\\
2 & 1 & 0 & 4 & 3 & 1 & 1 & 2 & 0 & 3\\
3 & 1 & 3 & 4 & 4 & 0 & 2 & 0 & 1 & 3\\
3 & 4 & 1 & 1 & 0 & 3 & 4 & 4 & 1 & 1 \\
1 & 0 & 4 & 4 & 2 & 2 & 1 & 1 & 4 & 2\\
2 & 4 & 2 & 2 & 0 & 3 & 4 & 2 & 1 & 0\\
0 & 2 & 2 & 2 & 4 & 0 & 3 & 4 & 2 & 0\\
3 & 4 & 1 & 1 & 4 & 3 & 4 & 4 & 2 & 1\\
0 & 2 & 2 & 2 & 2 & 1 & 3 & 2 & 1 & 2
\end{pmatrix}
\xrightarrow{\text{change of basis}}
\begin{pmatrix}
0 & 0 & 0 & 4 & \rvline & \MZero & \MZero & \MZero & \MZero & \MZero & \MZero \\
1 & 0 & 0 & 0 & \rvline & \MZero & \MZero & \MZero & \MZero & \MZero & \MZero \\
0& 1 & 0 & 3 & \rvline & \MZero & \MZero & \MZero & \MZero & \MZero & \MZero \\
0 & 0 & 1 & 4 & \rvline & \MZero & \MZero & \MZero & \MZero & \MZero & \MZero \\
\hline
\MZero & \MZero & \MZero & \MZero & \rvline & 1 & \MZero & \MZero & \MZero & \MZero & \MZero\\
\MZero & \MZero & \MZero & \MZero & \rvline &  \MZero & 1 & \MZero & \MZero & \MZero & \MZero\\
\MZero & \MZero & \MZero & \MZero & \rvline &  \MZero & \MZero & 1 & \MZero & \MZero & \MZero\\
\MZero & \MZero & \MZero & \MZero & \rvline &  \MZero & \MZero & \MZero & 1 & \MZero & \MZero\\
\MZero & \MZero & \MZero & \MZero & \rvline &  \MZero & \MZero & \MZero & \MZero & 1 & \MZero\\
\MZero & \MZero & \MZero & \MZero & \rvline &  \MZero & \MZero & \MZero & \MZero & \MZero & 1
\end{pmatrix}
\]
	This element acts irreducibly on a $4$-dimensional subspace of $\F_5^{10}$ and fixes a
6-dimensional subspace pointwise and is therefore a stingray element.
\end{example}

The algorithms presented here require ppd stingray elements.
As the proportion of ppd stingray elements in $\SL(d,q)$ is small, 
we seek elements, called
\Defn{ppd pre-stingray candidates}, from which 
ppd stingray elements can easily be constructed. 

\begin{definition}\label{PrestingrayElement}
  An element $\ps \in \GL(d,q)$ is an \Defn{$m$-pre-stingray candidate} for
  $m\le d$ if the
characteristic polynomial $\chi_{\ps}(x)$ has an irreducible factor
$P(x) \in \F_q[x]$ of degree $m$ over $\F_q$ and no other irreducible
factors of degree divisible by $m$. The irreducible factor $P(x)$ is a
\Defn{stingray factor}. Moreover, if 
$\ps$ is a ppd$(d, q; m)$-element, we say that
$\ps$ is a  \Defn{ppd $m$-stingray element}.
\end{definition}

Note that the degree of a stingray factor of a pre-stingray candidate is
equal to the dimension of the stingray body of a corresponding stingray
element.

The following result is the key for computing ppd-stingray 
elements from ppd pre-stingray candidates.

\begin{theorem}\label{StingrayOfPrestingray}
Let $G \leq \GL(d,q)$ and let $\ps \in G$ be an $m$-pre-stingray candidate,
i.e.\ we can write $\chi_{\ps}(x) = P_1(x) P_2^{c_2}(x) \dotsc
	P_k^{c_k}(x),$ where $P_1,\dotsc,P_k \in \F_q[x]$ are irreducible, and
$m=\deg(P_1)$ does not divide $\deg(P_i)$ for $i \geq 2$.
Let   $B = p^\beta \prod_{i=2}^k
(q^{\deg(P_i)}-1)$ and $\beta = \lceil \log_p(\max\{c_2,\dotsc,c_k\})
 \rceil$.
Then the following hold:
\begin{enumerate}[label={(\arabic*)}]
\item $\ps^{B}$ has a $(d-m)$-dimensional $1$-eigenspace. 
\item If $x^{(q^m-1)/ \Phi(m,d)} \neq 1$ in $\F_q[x] / \langle P_1(x) \rangle$
	then $\ps$ is a $\ppd(d,q;m)$-element.
\item
If $\ps$ is a $\ppd(d,q;m)$-element, then
$\ps^B$ is a ppd $m$-stingray element, whence 
it acts irreducibly on $W_s$.
\end{enumerate}
\end{theorem}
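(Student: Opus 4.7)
The plan is to analyse $\ps$ via the primary decomposition of $V = \F_q^d$ as an $\F_q[\ps]$-module, writing $V = V_1 \oplus V_2 \oplus \dotsc \oplus V_k$ where $V_i$ is the $P_i$-primary component (so $\dim V_i = c_i \deg P_i$, with $c_1 = 1$). Since $P_1$ occurs in $\chi_\ps$ with multiplicity one, $V_1 \cong \F_q[x]/\langle P_1(x)\rangle \cong \F_{q^m}$ as an $\F_q[\ps]$-module, and $\ps|_{V_1}$ acts as multiplication by $\alpha$, the image of $x$. Each of the three items will then reduce to a statement about $\ps$ on one of the blocks.

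For part (1), I would show $\ps^B$ fixes every $V_i$ pointwise for $i \geq 2$ by a Frobenius computation in $\F_q[x]/\langle P_i(x)^{c_i}\rangle$. Since $x$ is invertible modulo $P_i$, we have $x^{q^{\deg P_i}-1} = 1 + P_i \cdot h$ for some $h \in \F_q[x]$. Raising to the $p^\beta$-th power and using that $y \mapsto y^{p^\beta}$ is a ring homomorphism in characteristic $p$ yields $x^{p^\beta(q^{\deg P_i}-1)} \equiv 1 \pmod{P_i^{p^\beta}}$, which by $p^\beta \geq c_i$ gives the desired congruence modulo $P_i^{c_i}$. Since $B$ is a multiple of $p^\beta(q^{\deg P_i}-1)$ for every $i \geq 2$, $\ps^B$ acts as the identity on $V_2 \oplus \dotsc \oplus V_k$, producing a $1$-eigenspace of dimension at least $d-m$.

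For part (2), I would observe that $(q^m-1)/\Phi(m,q)$ is the largest divisor of $q^m-1$ whose prime factorisation contains no primitive prime divisor. Hence the hypothesis $x^{(q^m-1)/\Phi(m,q)} \neq 1$ in $\F_q[x]/\langle P_1\rangle$ forces some ppd $r$ of $q^m-1$ to divide the order of $\alpha$ in $\F_{q^m}^\times$, and in particular to divide the order of $\ps$. For part (3), given a ppd $r$ of $q^m-1$ with $r \mid \mathrm{ord}(\ps)$, I would exploit the standard fact that $r \mid q^j-1$ iff $m \mid j$ and $r \neq p$. Since $m \nmid \deg P_i$ for $i \geq 2$, $r$ cannot divide $p^\beta(q^{\deg P_i}-1)$, hence cannot divide the order of $\ps|_{V_i}$, and so $r \mid \mathrm{ord}(\alpha)$. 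Because $\gcd(B,r)=1$, we also have $r \mid \mathrm{ord}(\alpha^B)$, so $\alpha^B$ is nontrivial and lies in no proper subfield of $\F_{q^m}$; thus $\F_q[\alpha^B]=\F_{q^m}$, and $\ps^B|_{V_1}$ acts irreducibly on the $m$-dimensional $V_1$. Combined with the result of (1), this identifies the stingray tail as $\bigoplus_{i \geq 2} V_i$ and the stingray body as $V_1$, and $\ps^B$ is itself ppd because $r \mid \mathrm{ord}(\ps^B)$.

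The main obstacle is the Frobenius bookkeeping in part (1): one must promote the multiplicative order of $x$ modulo $P_i$ to a statement modulo $P_i^{c_i}$, which is why $\beta$ is defined via $\lceil \log_p \max c_i \rceil$. The remainder is essentially an exercise in ppd arithmetic inside the cyclic group $\F_{q^m}^\times$ and the elementary subfield lattice of $\F_{q^m}$.
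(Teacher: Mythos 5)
Your argument is correct and complete, but it is worth noting that the paper does not actually prove this theorem: its ``proof'' consists entirely of citations, attributing (1) and (2) to Niemeyer--Praeger \cite[Remark 3.2]{SR1} and (3) to \cite[p.578]{NeuPrae} and \cite[Sec.~4.2]{NCR2}. What you have written is a self-contained reconstruction along the standard lines of those references: the $P_i$-primary decomposition $V=V_1\oplus\dotsb\oplus V_k$, the Frobenius trick $(1+P_ih)^{p^\beta}=1+(P_ih)^{p^\beta}$ to lift the order of $x$ from modulo $P_i$ to modulo $P_i^{c_i}$ (this is exactly the role of $\beta$), and the order-of-$q$-mod-$r$ arithmetic showing a ppd $r$ of $q^m-1$ survives in $\operatorname{ord}(\alpha^B)$ and forces $\F_q[\alpha^B]=\F_{q^m}$. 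All of these steps check out, including the observations that $r\nmid B$ and that $\operatorname{im}(\ps^B-1)=V_1$ because $\alpha^B-1$ is invertible in the field $\F_{q^m}$. The one discrepancy to flag: for part (1) you correctly prove that the $1$-eigenspace of $\ps^B$ has dimension \emph{at least} $d-m$, whereas the theorem asserts it equals $d-m$. Equality requires $\alpha^B\neq 1$, which is not guaranteed by the hypotheses of (1) alone (e.g.\ $\operatorname{ord}(\alpha)$ could divide $\prod_{i\geq 2}(q^{\deg P_i}-1)$ when $q^m-1$ has no primitive prime divisors); it does follow under the ppd hypothesis of part (3). This is an imprecision in the statement rather than a gap in your proof, and your ``at least'' formulation is the version actually needed by the algorithms.
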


\begin{proof}
	(1) and (2) are 
	\cite[Remark 3.2]{SR1}. Assertion (3) follows from \cite[p.578]{NeuPrae}
	and \cite[Sec. 4.2]{NCR2}.
\end{proof}

\begin{remark}
\label{FindStingrayElement}
The previous theorem allows us to construct a ppd stingray 
element given a ppd pre-stingray candidate. The 
	ppd pre-stingray candidates we require satisfy $m \in O(\log(d))$.
The probability of finding such ppd
	pre-stingray candidates by random selection has been determined
	Niemeyer and Praeger \cite[Theorem 3.3]{SR1}. 
They show that the proportion of pre-stingray candidates
	in $\GL(d,q)$ which are $\ppd(d,q; m)$ elements
	for $\log(d) \le m \le d/2$ is at least $1-\frac{1}{m}$.
All in all this can be turned into a polynomial-time randomized algorithm \FuncSty{FindStingrayElement} which takes as input a generating set for our group
and a bound $N$ on the maximal number of random elements it may generate before
giving up, and which returns either $\InLineCode{fail}$, or a stringray candidate $g$
together with an MSLP describing it and the new value for $N$, i.e., the input $N$ minus the number of random selections that were used to find $g$.
\end{remark}

\section{\GoingDown\ algorithm} \label{GoingDownSL}
Stingray elements are key to the \GoingDown\ algorithm, as they are
used in the \GoingDown\ basic step to find a subgroup of $\SL(d,q)$
isomorphic to $\SL(d',q)$ quickly, where $d' \leq 4 \lceil \log(d)
\rceil$. Later in this section we study how the \GoingDown\ basic step
can be implemented.
Repeated calls to the \GoingDown\ basic step
yield a descending recognition chain as in
\cref{DescendingRecognitionChain}
\[\SL(4,q) \cong U_k
\leq U_{k-1} \cong \SL(d_{k-1},q)
\leq \dotsc
\leq U_1 \cong \SL(d_1,q)
\leq U_0 = G,\]
where $d_i \leq 4 \lceil \log(d_{i-1}) \rceil$ for $2 \leq i \leq k$.

\subsection*{\GoingDown\ basic step}
Let $G = \SL(d,q)$. In this section we describe and prove the
correctness of one \GoingDown\ basic step. Given a stingray embedded
subgroup $H$ of $G$ with $H \cong \SL(d',q)$  for $d'\le d$ as in
\cref{StandardEmbedded}, the aim of the basic step is to compute a
stingray embedded subgroup $U$ of $G$ with $U \cong \SL(d'',q)$ and $d''
\leq 4 \lceil \log(d') \rceil$. The algorithm to compute the subgroup
$U$ is a randomised algorithm using stingray elements. Therefore this
section relies heavily on \cref{sec:StingrayElements} and the
algorithms presented there.

We start with a sketch of the idea. Let $s_1, s_2 \in \GL(d,q)$ be two
stingray elements, let $\stingb_{s_i} \leq \F_q^d$ be the stingray
bodies, $\dim(\stingb_{s_i}) = n_i$ and $\stingt_{s_i}$ the stingray
tails for $i \in \{1,2\}$, 
where $n_1+n_2 \leq  4 \lceil \log(d) \rceil$. 
As in \cref{StingrayElement}, there are base
change matrices $\BC_i$ such that the stingray element $s_i^{\BC_i}$ is
a matrix with a non-trivial $n_i \times n_i$ block in the upper left
hand corner and 1s on the remaining diagonal. The base change matrix
$\BC_i$ can be computed by appending a basis for the stingray tail
$\stingt_{s_i}$ to a basis for the stingray body $\stingb_{s_i}$ for $i
\in \{1,2\}$.
With high probability the base change matrices
$\BC_1$ and $\BC_2$ are different. However, since $n_1$ and $n_2$ are
relatively small compared to $d$, the elements $s_1$ and $s_2$ must have
a large common $1$-eigenspace, namely $\stingt_{s_1} \cap \stingt_{s_2}$.
In fact we have $\F_q^d = (\stingb_{s_1} + \stingb_{s_2}) \oplus
(\stingt_{s_1} \cap \stingt_{s_2})$ and the two summands are
both invariant under $\langle s_1, s_2 \rangle$.
We consider a base change matrix $\BC_{s_1,s_2} \in
\GL(d,q)$ that arises if we choose a basis for $\stingb_{s_1} +
\stingb_{s_2}$ and append a basis for the large common 1-eigenspace
$\stingt_{s_1} \cap \stingt_{s_2}$ of $s_1$ and $s_2$. Let $n :=
\dim(\stingb_{s_1} + \stingb_{s_2}) = d - \dim(\stingt_{s_1} \cap
\stingt_{s_2})$.
Then the stingray elements $s_1^{\BC_{s_1,s_2}}$ and
$s_2^{\BC_{s_1,s_2}}$ act on $\stingb_{s_1} + \stingb_{s_2}$.
%
With high probability $\stingb_{s_1} \cap
\stingb_{s_2} = \{ 0 \}$, $\langle s_1, s_2 \rangle$ acts irreducibly on
$\stingb_{s_1} + \stingb_{s_2}$ and $\langle s_1, s_2 \rangle \cong
\SL(n,q)$. 

\begin{example}\label{TwoStingrayOneBaseChange}
Let $G = \SL(6,5)$. Then
\[
\scriptsize\arraycolsep=2pt
s_1 := \begin{pmatrix}
1 & 1& 2& 4& 2&3 \\
2 & 4& 0& 4& 4&1 \\
3 & 3& 3& 0& 3&2 \\
4 & 3& 4& 2& 2&3 \\
1 & 4& 0& 2& 3&3 \\
1 & 0& 2& 1& 4&2 \\
\end{pmatrix},
\qquad
s_2 := \begin{pmatrix}
3 &2 &1 &1 &3 &3 \\
3 &3 &0 &4 &0 &3 \\
0 &3 &3 &0 &1 &2 \\
3 &1 &1 &0 &3 &4 \\
0 &2 &3 &0 &0 &3 \\
1 &0 &4 &3 &2 &1 \\
\end{pmatrix}
\]
are two stingray elements with $s_1,s_2 \in G = \SL(6,5)$ and
additionally $\langle s_1, s_2 \rangle \cong \SL(4,5)$. Here
$\stingb_{s_1} = \langle (2,1,1,1,0,0), (4,4,1,0,4,1) \rangle$ and
$\stingb_{s_2} = \langle (3,0,2,4,1,0), (1,4,0,3,0,1) \rangle$, i.e.\
$n_i := \dim(\stingb_{s_i}) = 2$ and $\dim(\stingt_{s_i})=4$.
Moreover, $\stingb_{s_1} \cap \stingb_{s_2} = \{ 0 \}$ thus
$\dim(\stingb_{s_1} + \stingb_{s_2}) = 4$ and so $\dim(\stingt_{s_1}
\cap \stingt_{s_2}) = 2$. Computing a basis for
the common $1$-eigenspace of $s_1$ and $s_2$ yields
$\stingt_{s_1} \cap \stingt_{s_2} = \langle (1,4,0,2,4,0), (0,1,3,0,2,2)
\rangle$ and hence we overall obtain the basis
\begin{align*}
\MB := ( & (2,1,1,1,0,0), (4,4,1,0,4,1),(3,0,2,4,1,0), (1,4,0,3,0,1), \\
         & (1,4,0,2,4,0), (0,1,3,0,2,2))
\end{align*}
for $\F_5^6$. Representing $s_1$ and $s_2$ with respect to this basis $\MB$ gives
\[
\scriptsize\arraycolsep=2pt
{}^{\mathcal{B}} s_1^{\mathcal{B}} = \begin{pmatrix}
 4& 2& \MZero& \MZero& \rvline &\MZero &\MZero \\
 1& 2& \MZero& \MZero&\rvline &\MZero &\MZero \\
 3& 3& 1& 0&\rvline &\MZero &\MZero \\
 4& 2& 0& 1&\rvline &\MZero &\MZero \\
 \hline
 \MZero& \MZero& \MZero& \MZero&\rvline & 1&0 \\
 \MZero& \MZero& \MZero& \MZero&\rvline & 0&1 \\
\end{pmatrix},
\qquad
{}^{\mathcal{B}} s_2^{\mathcal{B}} = \begin{pmatrix}
 1& 0& 0& 0&\rvline &\MZero &\MZero \\
 0& 1& 1& 3&\rvline &\MZero &\MZero \\
 \MZero& \MZero& 3& 2&\rvline &\MZero &\MZero \\
 \MZero& \MZero& 4& 3&\rvline &\MZero &\MZero \\
 \hline
 \MZero& \MZero& \MZero& \MZero&\rvline & 1&0 \\
 \MZero& \MZero& \MZero& \MZero&\rvline & 0&1 \\
\end{pmatrix}.
\]
In order to improve the run-time, it is reasonable to represent the
matrices $s_1$ and $s_2$ as elements of a special linear group
of degree $n_1 + n_2$.
Note that in this example $\SL(4,q) \cong
\langle s_1,s_2 \rangle$ and that $\langle s_1,s_2 \rangle$ is stingray
embedded of degree $4$ in $\SL(6,5)$.
\end{example}

The next algorithm implements the \GoingDown\ basic step, i.e. computes
the next subgroup of a descending recognition chain
\[ \SL(4,q) \cong U_k \leq U_{k-1} \cong \SL(d_{k-1},q)
\leq \dotsc \leq U_1 \cong \SL(d_1,q) \leq U_0 = G, \]
where $d_i \leq 4 \lceil \log(d_{i-1}) \rceil$ for $2 \leq i \leq k$.

\begin{algorithm}[H]
\caption{GoingDownBasicStep}
\label{GoingDownBasicStep}
\small{
{\nlnonumber
\IOKw{
\begin{tabular}{@{}ll}
        \KwIn{}& {\InputT} {\normalfont $\MyIn{d_1} \in \N$ with $\MyIn{d_1}  > 4$}\\
        &{\InputT} {\normalfont $\MyIn{\langle X \rangle = G} \leq \GL(d,q)$ with $\MyIn{G} \cong \SL(\MyIn{d_1},q)$} \\
        &{\InputT} {\normalfont $\MyIn{\BC} \in \GL(d,q)$ a base change matrix such that $\MyIn{G}^{\MyIn{\BC}}$ is stingray embedded} \\
        &{\normalfont in $\MyIn{\GL(d,q)}^{\MyIn{\BC}}$} \\
        &{\InputT} {\normalfont $\MyIn{\MN} \in \N$} \\
        \KwOut{}& \MyFail{fail} OR $\MyOut{(d_2,U,\BC',\slp,\MN')}$ where \\
        &{\OutputT} {\normalfont $\MyOut{d_2} \in \N$ with $4 \leq \MyOut{d_2} \leq 4 \lceil \log(\MyIn{d_1}) \rceil$,} \\
        &{\OutputT} {\normalfont $\MyOut{U} \leq \MyIn{G}$ with $\MyOut{U} \cong \SL(\MyOut{d_2},q)$,} \\
        & {\OutputT} {\normalfont $\MyOut{\BC'}$ is a base change matrix such that $\MyOut{U}^{\MyOut{\BC'}}$ is stingray embedded in $\MyIn{G}^{\MyOut{\BC}}$,} \\
        &{\OutputT} {\normalfont $\MyOut{\slp}$ is an MSLP from $\MyIn{X}$ to generators of $\MyOut{U}$ and} \\
        &{\OutputT} {\normalfont $\MyOut{\MN'} \in \N$ where $\MyIn{\MN}-\MyOut{\MN'}$ is the number of random selections that were used} \\
\end{tabular}
} \\
}

\nlnonumber
\LinesNumbered
\setcounter{AlgoLine}{0}
\Begin($\FuncSty{GoingDownBasicStep} {(} \MyInT{G,\ d_1,\ \BC,\ \MN} {)}$)
{
	\While(\tcp*[f]{\scriptsize{\cref{GlobalCounter}}}){$\MN > 0$}{
		$(s_1,\slp_1,\MN) \asgn$ \algoref{FindStingrayElement}($G^\BC$, $d_1$, $\MN$) \tcp*[r]{\scriptsize{\cref{OutputCheck}}}
		$\stingb_{s_1} \asgn$ \FuncSty{Image}($s_1 - I_{d_1}$) \tcp*[r]{\scriptsize{compute stingray body}}
		\Repeat{$\stingb_{s_1}\cap\stingb_{s_2}=\{0\}$}{
      		$(s_2,\slp_2,\MN) \asgn$ \algoref{FindStingrayElement}($G^\BC$, $d_1$, $\MN$) \tcp*[r]{\scriptsize{\cref{OutputCheck}}}
      		$\stingb_{s_2} \asgn$ \FuncSty{Image}($s_2 - I_{d_1}$) \tcp*[r]{\scriptsize{compute stingray body}}
    		}
    		$d_2 \asgn \dim(\stingb_{s_1})+\dim(\stingb_{s_2})$ \;
    		\uIf(\tcp*[f]{\scriptsize{Using a naming algorithm, see \cref{Naming}}}){$\langle s_1, s_2 \rangle \cong \SL(d_2,\ q)$}{
    			  $\slp \asgn$ an MSLP from $X$ to $(s_1,s_2)$ using $\slp_1$ and $\slp_2$ \;
    			  $\BC_n$ $\asgn$ base change to concatenation of $\FuncSty{Basis}(\stingb_{s_1} + \stingb_{s_2})$ and $\FuncSty{Basis}(\stingt_{s_1} \cap \stingt_{s_2})$\;
			  \Return $\MyOutT{( \langle \operatorname{diag}(s_1,I_{d-d_1}), \operatorname{diag}(s_2,I_{d-d_1}) \rangle,d_2,\operatorname{diag}(\BC_n,I_{d-d_1})\BC,\slp,\MN)}$ \;
    		}
	}
	\Return $\MyOutTT{\fail}$ \;
}
}
\end{algorithm}

\begin{remark}
\begin{enumerate}[label={(\arabic*)}]
\item The input parameter $\MN$ of \algoref{GoingDownBasicStep} is used
   to control the maximal number of random elements chosen. Even though
   \algoref{GoingDownBasicStep} does not compute random elements itself,
   \algoref{FindStingrayElement} is used which is a randomised algorithm to
   compute stingray elements, see \cref{sec:StingrayElements}.
   \algoref{GoingDownBasicStep} is a one-sided Monte Carlo algorithm and
   its success depends on the control parameter $\MN$. Thus if $\MN$ is
   too small, then the algorithm can fail. How $\MN$ has to be chosen in
   order to get a specific success probability $\epsilon \in (0,1)$
   will be subject of a future publication.
\item The condition of the $\InLineCode{if}$ statement in line 9 of
    \algoref{GoingDownBasicStep}, which is $\langle s_1, s_2 \rangle \cong
    \SL(d_2,\ q)$, can be verified using a naming algorithm as in \cref{Naming}.

\item
There is an immediate refinement of this algorithm: If a pair $s_1,s_2$
generating a group $U$ isomorphic to $\SL(d_2,q)$ has been found, then this
isomorphism can be made effective using the action of $U$ on
$W_{s_1}+W_{s_2}$: since $U$ is stingray embedded we can just project onto the
stingray body. Then the next \GoingDown\ step can work with much smaller
matrices of degree $d_2$ instead of degree $d$.
This improves the running time asymptotically while the length and
memory quota of the output MSLPs remain the same.

\end{enumerate}
\end{remark}

We now establish the correctness of \algoref{GoingDownBasicStep} and that
it terminates using at most $\MN$ random elements.

\begin{theorem}\label{GoingDownToDim4StepProof}
\algoref{GoingDownBasicStep} terminates using at most $\MN$ random
selections and works correctly.
\end{theorem}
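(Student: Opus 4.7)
The plan is to split the verification into a termination analysis and a correctness check for each component of a non-$\fail$ return. For termination, the only operations inside \algoref{GoingDownBasicStep} that consume random selections are the two calls to \algoref{FindStingrayElement}, which decrement and return the shared counter $\MN$ per \cref{GlobalCounter}; the outer while-loop exits once $\MN$ reaches $0$, and by \cref{OutputCheck} any inner $\fail$ propagates out immediately. This yields the bound of at most $\MN$ random selections, with the returned $\MN'$ recording the residual budget.

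For the correctness of the scalar and subgroup outputs, suppose the algorithm reaches the return in line~12. By correctness of \algoref{FindStingrayElement} (see \cref{FindStingrayElement}), $s_1$ and $s_2$ are ppd stingray elements of $G^\BC$ with body dimensions $n_1, n_2$ each bounded by $2\lceil \log(d_1) \rceil$, and the MSLPs $\slp_1, \slp_2$ evaluate in $X$ to $s_1, s_2$; combining them yields $\slp$. The repeat-until check ensures $\stingb_{s_1} \cap \stingb_{s_2} = \{0\}$, so $d_2 = n_1 + n_2 = \dim(\stingb_{s_1} + \stingb_{s_2})$ and $4 \leq d_2 \leq 4\lceil\log(d_1)\rceil$. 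The \InLineCode{if}-test, implemented via a naming algorithm as in \cref{Naming}, certifies $U := \langle s_1, s_2 \rangle \cong \SL(d_2, q)$.

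The geometric heart of the proof is showing that $\BC_n$ exhibits $U$ as stingray embedded of degree $d_2$ in $G^\BC$, which reduces to establishing $\F_q^{d_1} = V \oplus T$ for $V := \stingb_{s_1} + \stingb_{s_2}$ and $T := \stingt_{s_1} \cap \stingt_{s_2}$, with $V$ being $U$-invariant and $T$ pointwise $U$-fixed. Invariance of $V$ follows from $s_1 v = v + (s_1 - 1)v \in \stingb_{s_2} + \stingb_{s_1} = V$ for $v \in \stingb_{s_2}$, and symmetrically for $s_2$; fixedness of $T$ is immediate from its definition as the common $1$-eigenspace of $s_1$ and $s_2$. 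Inclusion-exclusion on the $\stingt$-spaces yields $\dim T \geq d_1 - d_2$, so the direct-sum decomposition reduces to $V \cap T = \{0\}$; once that is known, concatenating bases of $V$ and $T$ into $\BC_n$ places each $s_i^{\BC_n}$, and thus $U^{\BC_n}$, into the block form $\diag(\cdot,\ I_{d_1 - d_2})$ required by \cref{StandardEmbedded}.

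The hard part will be verifying $V \cap T = \{0\}$, which amounts to ruling out a nonzero $U$-fixed vector in the $d_2$-dimensional module $V$. Since \algoref{FindStingrayElement} enforces $n_i > 1$, we have $d_2 \geq 4$, so exceptional small-dimensional representations of $\SL(d_2, q)$ do not appear. The isomorphism $U \cong \SL(d_2, q)$, combined with the explicit stingray structure of the generators on $V$, forces the $U$-action on $V$ to be the natural module, which has no nonzero fixed vector. Together with $\dim T \geq d_1 - d_2$ and $\dim V = d_2$, this gives $\dim T = d_1 - d_2$ and $V \oplus T = \F_q^{d_1}$, completing the proof.
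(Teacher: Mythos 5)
Your proposal is correct, and it actually proves more than the paper's own proof does. The paper's argument consists essentially of your first two paragraphs: termination because every call to \algoref{FindStingrayElement} consumes at least one random selection from the shared counter $\MN$, and the bound $4\leq d_2\leq 4\lceil\log(d_1)\rceil$ obtained by summing the bounds $2\leq n_i\leq 2\lceil\log(d_1)\rceil$ on the two stingray body dimensions; the rest of the correctness is declared clear because line 9 certifies $\langle s_1,s_2\rangle\cong\SL(d_2,q)$ via a naming algorithm. The decomposition $\F_q^{d_1}=(\stingb_{s_1}+\stingb_{s_2})\oplus(\stingt_{s_1}\cap\stingt_{s_2})$ --- which is what makes $\BC_n$ a genuine base change matrix and $U^{\BC_n}$ stingray embedded, as the output specification demands --- is only asserted in the informal discussion preceding \algoref{GoingDownBasicStep} and is not re-examined in the proof. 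Your third and fourth paragraphs supply exactly this verification, and your reduction to $V\cap T=\{0\}$ is the right one. The only soft spot is that the decisive claim --- that $U\cong\SL(d_2,q)$ acting on the $d_2$-dimensional space $V$ must act as the natural module (or its dual/twist), hence without nonzero fixed vectors --- is asserted rather than proved. It is true, but closing it requires two inputs you should make explicit: first, faithfulness of the action of $U$ on $V$ (an element of the kernel fixes $V$ pointwise and acts trivially on $\F_q^{d_1}/V$, so it is unipotent; a nontrivial normal subgroup of the quasisimple group $U$ consisting of such elements would lie in $Z(\SL(d_2,q))$, whose order is coprime to $p$); second, the minimal faithful module degree of $\SL(d_2,q)$ in defining characteristic, which forces the $d_2$-dimensional faithful module $V$ to be irreducible and hence fixed-point-free. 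With those two sentences added, your argument is a complete and more rigorous version of the paper's proof.
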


\begin{proof}
We start by proving that the algorithm terminates.
\algoref{GoingDownBasicStep} contains two loops which start in line 1 and
in line 4. The loop starting in line 1 terminates if $\MN \leq 0$. For every
computation of a random element, $\MN$ is decreased and every call of
\algoref{FindStingrayElement} requires at least one computation of a random
element. Therefore, the statement $\MN \leq 0$ becomes true after at
most $\MN$ executions of \algoref{FindStingrayElement}. In this case
the subroutine ends by returning $\InLineCode{fail}$. By \cref{OutputCheck}
the loop starting in line 4 also returns $\InLineCode{fail}$ if $\MN \leq 0$ by
\algoref{FindStingrayElement} in line 5.

The correctness is clear since the algorithm either returns
$\InLineCode{fail}$ or a subgroup which is isomorphic to $\SL(d_2,q)$ as
this is verified in line 9. We only have to prove that $4 \leq d_2 \leq
4 \lceil \log(d_1) \rceil$. The non-trivial and irreducible subspace of
the stingray element returned by \algoref{GoingDownBasicStep} has
dimension bounded by $2$ and $2 \lceil \log(d_1) \rceil$. Since $d_2$ is
the sum of two such integers it is bounded by $4$ and $4 \lceil
\log(d_1) \rceil$.
\end{proof}

\begin{remark}\label{StingrayNotToDimension2}
Algorithm \GoingDown\ terminates in a group isomorphic to $\SL(4,q)$.
	We now explain, why the method cannot be extended to descend to $\SL(2,q)$.
Note that stingray elements of special linear groups with a
1-dimensional stingray body are equal to the identity matrix. To see
this, apply a base change to obtain a block matrix structure as in
\cref{ex:StingrayElement} where one block visualizes the stingray tail
which is an identity matrix of size $(d-1) \times (d-1)$ and the other
block visualizes the stingray body which has to be $(1) \in \F_q^{1
\times 1}$ since the stingray element is contained in $\SL(d,q)$.

Let $G = \SL(4,q)$ and suppose we search for two stingray elements $s_1,
s_2 \in G$ with $H = \langle s_1, s_2 \rangle \cong \SL(2,q)$. One idea
to overcome this problem
could be to use the \GoingDown\ basic step, i.e.\ to search for two
stingray elements with 1-dimensional stingray bodies which also
intersect trivially and verify $\langle s_1, s_2 \rangle \cong
\SL(2,q)$. As we have noted, stingray elements with 1-dimensional
stingray body are equal to the identity matrix. Therefore, it is not
possible that $\langle s_1, s_2 \rangle \cong \SL(2,q)$.

Another idea is to compute two stingray elements $s_1, s_2 \in G$ which
have the same 2-dimensional stingray body. Note that there are
\[
\binom{4}{2}_q
= \frac{(q^4-1)(q^4-q)}{(q^2-1)(q^2-q)}
= (q^2+1)(q^2+q+1)
> q^4
\]
different 2-dimensional subspaces of $\F_q^4$. Therefore, the
probability that $s_1$ and $s_2$ have the same 2-dimensional stingray
body is less than $1/q^4$. For example, if $q = 121$, then the
probability that $s_1$ and $s_2$ have the same 2-dimensional stingray
body is $0.0000000046$ which makes finding such elements impractical.
\end{remark}

\subsection*{Combining \GoingDown\ basic steps}
Let $G = \SL(d,q)$. By repeatedly calling \algoref{GoingDownBasicStep},
we now have all the methods needed to construct a descending recognition
chain as in \cref{DescendingRecognitionChain} of special linear groups
as
\[ \SL(4,q) \cong U_k \leq U_{k-1} \cong \SL(d_{k-1},q)
\leq \dotsc \leq U_1 \cong \SL(d_1,q) \leq U_0 = G, \]
where $d_i \leq 4 \lceil \log(d_{i-1}) \rceil$ for $1 \leq i \leq k$.
Observe that the chain reaches a $\SL(4,q)$ very rapidly as the basic
step  reduces the dimension logarithmically. The overall number of
\GoingDown\ basic steps required is given by the iterated logarithm
$\log^*(d)$.

\begin{remark}
Note that all state-of-the-art algorithms require a lot more steps. The
currently best algorithm is an algorithm by Dietrich, Lübeck, Leedham-Green and O'Brien
\cite{CREVEN,CRODD}, in the following referred to as DLLO algorithm. The
DLLO algorithm has two variations from which one is based on a
``splitting'' step (called ``One'' in \cite{CRODD}) and the other on a
``conjugating'' step (called ``Two'' in \cite{CRODD}). In practice the
variation of the DLLO algorithm with the ``splitting'' step is used. In
each call to the ``splitting'' step of the DLLO algorithm two subgroups
of half of the input dimension are computed in the best case. Moreover, the
DLLO algorithm has to be applied to both computed subgroups. The
\GoingDown\ algorithm in this publication avoids ``splitting'' by
constructing only one subgroup which is isomorphic to $\SL(4,q)$.
It should be mentioned that the DLLO algorithm is also applicable for
non-natural irreducible matrix representations and for black box
settings and, hence, does not exploit properties of the natural
representation as we do in our \GoingUp\ algorithms which in turn has
an enormous impact on the design of the overall algorithm.
\end{remark}

\begin{remark}
Recall the descending recognition chain
\[ \SL(4,q) \cong U_k \leq U_{k-1} \cong \SL(d_{k-1},q)
\leq \dotsc \leq U_1 \cong \SL(d_1,q) \leq U_0 = G, \]
which is computed by the \GoingDown\ algorithm. For that it uses
\algoref{GoingDownBasicStep} which verifies that a group generated by two
stingray elements is isomorphic to a special linear group using naming
algorithms. Note that the number of calls to naming algorithms can be
reduced by slight variations of the algorithms, e.g.\
\begin{itemize}
\item Always use a naming algorithm, i.e.\ verify that $U_i \cong
    \SL(d_i,q)$ for all $i \in \{1,\dotsc,k \}$,
\item Only use a naming algorithm until the first subgroup is computed,
    i.e.\ only verify that $U_1 \cong \SL(d_1,q)$,
\item Never use a naming algorithm and restart from the input group if a
    maximal number of random selection is reached, i.e.\ restart from
    $U_0 = G$,
\item Never use a naming algorithm but instead of restarting from the
    input group backtrack one step, i.e.\ if a maximal number of tries is
    reached in the process of computing $U_{i+1}$ as a subgroup of $U_i$,
    then restart from $U_{i-1}$ and compute another subgroup $U_i$.
\end{itemize}
Practical tests of implementations could be used to compare the impact
on the running time which is not done in this publication. In the current
implementation of the $\GoingDown$ algorithm in \GAP\ we are using
the third strategy, i.e.\ we never use a naming algorithm and restart 
from the input group if a maximal number of random selection is reached 
which works fine in practice.
\end{remark}

\section{\GoingUp\ algorithm} \label{GoingUpSL}
In this section we assume that we have found a stingray embedded
subgroup $H$ of $G^\BC$ for a known base change matrix $\BC \in
\GL(d,q)$, i.e.\ $H \leq \langle X^\BC \rangle = G^\BC = \SL(d,q)$,
with $H \cong \SL(2,q)$ and an MSLP from $X$ to the standard generators
of $H$. Starting from this setting, we describe an algorithm to compute
standard generators of $G$. Similar to the \GoingDown\ algorithm we
present a \GoingUp\ step and afterwards use the \GoingUp\ step
repeatedly. In this publication we present two versions of a \GoingUp\
step, both of which are randomised algorithms.

The \GoingUp\ step of this section uses linear algebra and relies on
completely new ideas. On the one hand, every computation of this
solution can be performed extremely fast in the natural representations
of special linear groups which results in a very efficient \GoingUp\
step. On the other hand, the computations require that the given
representation of a special linear group is the natural representation
and the \GoingUp\ step of this section cannot easily be modified for
non-natural representations. Using the \GoingUp\ step repeatedly yields
an ascending recognition chain
\[
H = H_{\GUI{0}} \leq H_{\GUI{1}} \dotsc \leq H_{\GUI{\ell-1}} \leq H_{\GUI{\ell}} = G.
\]
as described in \cref{AscendingRecognitionChain} of \cref{OutlineOfTheAlgorithm}.

An alternative solution for \GoingUp\ compared to the one presented in this
publication is based on ideas of \cite{CRODD} and makes use of involution
centralizers.
The advantage of the algorithm we present here is that it is much faster when
applied to a special linear group in its natural representation However, this
comes at the price that the MSLPs for the standard generators of $G$ are
longer and that it cannot be used in non-natural settings. In contrast the
alternative solution is applicable to all classical groups in odd
characteristic. We will present it in a forthcoming publication.

\subsection{Overview of the \GoingUp\ step}
We start this section by stating the main theorem and giving a
description of the \GoingUp\ step in detail. The theorem is proved by
the correctness of the presented \GoingUp\ step of this section. The
algorithm consists of seven $\steps$. One of these $\steps$ is
randomised while the others are deterministic. Since one $\step$ is
randomised, the \GoingUp\ step is also randomised. The \GoingUp\ step
algorithm is presented as \algoref{GoingUpStep} in \cref{ssec:GoingUpStep}.
In \cref{sec:GoingUp} we call the \GoingUp\ step repeatedly to
construct standard generators of the input group $G$ which results in
the final \algoref{GoingUp}.

We start with our hypothesis on the setting of this section and then
state the main theorem. Afterwards, the theorem is divided into seven
$\steps$ which we investigate in more detail and prove their correctness
in the course of this section.

\begin{hypothesis}\label{VariablesGoingUpLinearAlgebra}
For the remainder of this section we assume $\langle X \rangle = G =
\SL(d,q)$ containing a stingray embedded subgroup $H \leq \langle X^\BC
\rangle = G^\BC$ with $H \cong \SL(n,q)$ for $n < d$ and for a known
base change matrix $\BC \in \GL(d,q)$. Moreover, standard generators
$Y_n$ of $H$ are given as words in $X$. Let $V = \F_q^d$ and suppose
that $\MB = (v_1,\dotsc,v_d)$ is a basis of $V$ and let $V_n = \langle
v_1,\dotsc ,v_n \rangle$ and $F_{d-n} = \langle v_{n+1}, \dotsc, v_d
\rangle$. We assume that $H$ acts on $V_n$ as
$\SL(n,q)$ and that $H$ fixes $F_{d-n}$ pointwise. Recall that
$(\prel_1,\dotsc,\prel_f)$ is an $\F_p$-basis of $\F_q$.
\end{hypothesis}

The main theorem of the \GoingUp\ step is the following.

\begin{theorem}\label{DoublingTheDimension}
Let $X \subseteq \SL(d,q)$ such that $\langle X \rangle= G = \SL(d,q)$
and let $2 \leq n < d$ with $n = 2$ or $n$ odd, let $\BC \in \GL(d,q)$
be a base change matrix. Let $Y_n^\BC$ be a set of standard generators
for the subgroup $\SL(n,q)$ stingray embedded into $G^\BC$. Furthermore,
let $\slp$ be a straight-line program from $X$ to $Y_n$ and let $n' :=
\min\{2n - 1, d \}$.

Then there is an algorithm that computes a base change matrix $\BC' \in
\GL(d,q)$ together with a straight-line program $\slp'$ from $X$ to a
set $Y_{n'}$, which is a set of standard generators for $\SL(n',q)$ and
$Y_{n'}^{\BC'}$ is stingray embedded in $G^{\BC'}$.
\end{theorem}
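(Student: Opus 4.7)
The plan is to carry out an inductive doubling strategy: starting from the stingray embedded subgroup $H \cong \SL(n,q)$ of $G^\BC$ acting on $V_n$ and fixing $F_{d-n}$ pointwise, I would produce a conjugate $H^c = c^{-1}Hc$ whose stingray body $W^c$ meets $V_n$ in exactly a one-dimensional subspace (when $n' = 2n - 1 < d$) or meets $V_n$ in such a way that together they span all of $V$ (when $n' = d$). In either case $V_n + W^c$ has dimension $n'$, both $H$ and $H^c$ act as full copies of $\SL(n,q)$ on this space, and a standard generation result for special linear groups implies $\langle H, H^c \rangle \cong \SL(n',q)$ acting naturally on $V_n + W^c$. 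Moreover this group fixes pointwise the complementary $(d - n')$-dimensional subspace, so after a suitable base change it is stingray embedded of degree $n'$ in $G$.

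I would structure the construction in seven phases as announced in the paper. Phase~1, the single randomised phase, searches for $c \in G$ using the nearly uniform random procedure of \cref{RandomisedAlgos}, subject to the global random-selection budget $\MN$ as in \cref{GlobalCounter}; the test is that $H$ and $H^c$ have the required one-dimensional intersection of stingray bodies, which can be verified by linear algebra on the matrix representing $c$ restricted to the relevant invariant subspaces. Phase~2 builds a new base change matrix $\BC'$ that sends $V_n + W^c$ to the span of the first $n'$ basis vectors and the common pointwise-fixed subspace to the remaining $d - n'$ basis vectors. Phases~3--5 use the standard generators $Y_n$ of $H$ together with appropriate conjugates by $c$ to express the transvection generators $E_{1,2}(\prel_i)$ and $E_{2,1}(\prel_i)$ of $\SL(n',q)^{\BC'}$ as MSLPs, exploiting the stingray embedded structure so that all matrix arithmetic happens in degree at most $n'$ rather than $d$. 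Phase~6 constructs the cyclic permutation generator $z_1$ or $z_2$ of $\SL(n',q)$ (choosing between them according to the parity of $n'$, with the sign correction from \cref{StandardGeneratorsSL}) from the corresponding element of $H$ combined with a power of $c$ that cycles the newly added basis vectors into place. Phase~7 concatenates the partial MSLPs together with the input MSLP $\slp$ for $Y_n$ into a single MSLP $\slp'$ from $X$ to $Y_{n'}$, and returns $(\slp', \BC')$.

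The main obstacle will be phase~1: proving that a nearly uniform random element of $G$ produces the required intersection configuration with controllable probability, so that the budget $\MN$ suffices to meet any prescribed success probability; the remaining six phases are deterministic, and their correctness follows from linear algebra once phase~2 has exposed the stingray embedded structure. The parity restriction $n = 2$ or $n$ odd in the hypothesis is used to ensure $n' = 2n - 1$ is itself odd whenever $n \ge 3$, so that iterating the \GoingUp\ step keeps $n$ odd throughout and the construction of the permutation generator in phase~6 via $z_2$ proceeds uniformly, the single exception being the last step which may produce an even $d$ and then switches from $z_2$ to $z_1$. Finally, the convention of \cref{OutputCheck} propagates a failure in phase~1 upward, while a successful return is verified to yield standard generators of the stingray embedded subgroup $\SL(n',q)$ of $G^{\BC'}$ by comparing the constructed matrices to \cref{StandardGeneratorsSL} in the new basis.
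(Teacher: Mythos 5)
Your Phase~1 is the fatal gap. You propose to draw a (nearly) uniformly random $c\in G$ and test whether the stingray body $V_nc$ of $H^c$ meets $V_n$ in exactly a line. But for $d$ substantially larger than $2n$, two ``independent'' $n$-dimensional subspaces of $\F_q^d$ almost never intersect nontrivially: the proportion of $c\in G$ with $V_n\cap V_nc\neq\{0\}$ is of order $q^{-(d-2n+1)}$, so this rejection sampling needs exponentially many draws and no polynomial budget $\MN$ can make it succeed with fixed probability (think $n=4$, $d=1000$). The paper's proof avoids exactly this trap: the conjugating element is not a generic $c$ but a conjugate $\cT=\GUE^{\cE}$ of a \emph{fixed} element $\GUE\in H$ whose fixed space has dimension $d-n+1$ (a transvection for $n=2$, a cycle otherwise). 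Since $\dim(V_n)+\dim(\Fix(\cT))>d$, one gets $V_n\cap\Fix(\cT)\neq\{0\}$ and hence $\dim(V_n\cap V_n\cT)\geq 1$ \emph{for free}, for every choice of $\cE$; the randomness is only needed for the generic conditions that the intersection is not larger than forced and that $F_{d-n}+\Fix(\cT)=V$. A further conjugation inside $H$ then moves a fixed vector to $v_n$, producing the doubling element. Without this device your algorithm is not a correct proof of the theorem as an effective statement.

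Two secondary points. First, you invoke ``a standard generation result'' to conclude $\langle H,H^c\rangle\cong\SL(n',q)$ from the intersection configuration alone. The paper does not do this and in fact cannot: it needs an additional spanning condition (C3), checkable only after the new base change, and loops back to the random phase when it fails; the isomorphism is then obtained \emph{constructively}, by exhibiting all transvections $E_{j,n}(\prel_i)$ and $E_{n,j}(\prel_i)$ for $n<j\leq n'$ via row and column operations with known MSLPs, rather than cited from a generation theorem whose hypotheses you have not verified. Second, your Phase~6 builds the cycles $z_1,z_2$ from ``a power of $c$ that cycles the newly added basis vectors''; a doubling element is not a permutation matrix and has no reason to do this. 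The paper instead assembles the transpositions $(n,i)$ from the new transvections via $E_{i,n}^{-1}(1)E_{n,i}(1)E_{i,n}^{-1}(1)$ and multiplies them with the old cycles.
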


We prove \cref{DoublingTheDimension} by stating an algorithm. The
algorithm consists of seven $\steps$, called \ref{SL1} to \ref{SL7}, which
are discussed and proven in the remainder of this section.

\begin{remark}\label{IdeaOfGoingUpBasicStepSL}
The general idea of the algorithm is the following. Let $H = \SL(V_n)
\cong \SL(n,q)$ stingray embedded in $G^{\BC}$ and $G = \SL(d,q)$. Given
standard generators $Y_n$ for $H$, we construct an element $\cM \in
G^{\BC}$ with the following properties:
\begin{enumerate}[label={(C\arabic*)},leftmargin=*]
\item\label{C1} $\dim(V_n + V_n \cM) = n'$,
\item\label{C2} if $n' < d$, then $\dim(\Fix(H) + \Fix(\cM))
= \dim(F_{d-n} + \Fix(\cM)) = d$.
\end{enumerate}
Note that $H$ acts on $V_n$ as $\SL(V_n) \cong \SL(n,q)$ and similarly
$H^{\cM}$ acts on $V_n \cM$ as $\SL(V_n \cM) \cong \SL(n,q)$. Then we
have
\[ \dim(V_n \cap V_n \cM)
= \dim(V_n) + \dim(V_n \cM) - \dim(V_n + V_n \cM)
= 2n - n'
= \max \{ 1, 2n - d \}. \]
In \cref{ssec:GoingUpPhase2} we formulate an additional property \ref{C3}
which cannot be defined at this point. If $\cM$ does not satisfy \ref{C3},
then we construct a new element $\cM \in G^{\BC}$. In the remainder of
this section we prove that this setup allows us to choose a basis of the
$n'$-dimensional subspace $V_{n'} =V_n+V_n \cM$ of $V$ carefully such
that we can use $\cM$-conjugates of certain transvections in $H$ to
assemble standard generators for $\SL(n',q)$ with respect to the new
basis. As a result this allows us to conclude that $\langle H,H^{\cM}
\rangle$ is indeed isomorphic to $\SL(n', q)$. When $n' \leq d$ we have
roughly doubled the degree from $n$ to $2n-1$ using $\cM$.
\end{remark}

\begin{definition}\label{DoublingElement}
Assume the setting as described in \cref{VariablesGoingUpLinearAlgebra} and let $\cM \in G^{\BC}$.
\begin{enumerate}[label={(\arabic*)}]
\item If $\cM$ satisfies \ref{C1} and \ref{C2} of
\cref{IdeaOfGoingUpBasicStepSL}, then $\cM$ is a \Defn{weak doubling
element} with respect to $H$.
\item If $\cM$ is a weak doubling element with respect to $H$ and
additionally fixes $v_n$, then $\cM$ is a \Defn{doubling element} with
respect to $H$.
\end{enumerate}
When the context is clear, an weak doubling element and doubling element
with respect to $H$ are only denoted by a weak doubling element and
doubling element.
\end{definition}

\begin{remark}\label{TasksOfPhases}
The next sections can roughly be summarized as follows.
\begin{enumerate}[label={(\arabic*)}]
\item \textbf{Construction of a doubling element:} Construct an element
$\cM \in G^{\BC}$ fulfilling the properties \ref{C1} and \ref{C2} and fixing
$v_n$, i.e.\ a doubling element as in \cref{DoublingElement}. This is
achieved by random selection of elements of $G^{\BC}$ and discussed in
\cref{ssec:GoingUpPhase1}.
\item \textbf{Construction of a new base change matrix:} Construct a new
base change matrix $\BC'$ such that $\langle H,H^{\cM} \rangle^{\BC'}$
is stingray embedded in $G^{\BC \BC'}$ which is possible because of the
properties \ref{C1} and \ref{C2}. The computation of $\BC'$ is deterministic
and discussed in \cref{ssec:GoingUpPhase2}. In this section we also
formulate an additional property \ref{C3} which can only be given after
constructing $\BC'$. If $\cM$ does not satisfy \ref{C3}, then we start
over from (1).
\item \textbf{Construction of transvections and standard generators:} If
(2) is successful, then we can conclude that $\langle H,H^{\cM}
\rangle^{\BC'} \cong \SL(n',q)$. Hence, we construct transvections and
standard generators for $\langle H,H^{\cM} \rangle^{\BC'}$. Note that we
only need to compute permutation matrices corresponding to the $n'$ and
$(n'-1)$ cycle of \cref{StandardGeneratorsSL} as the transvections of
the standard generators of $H$ are also the transvections of the
standard generators of $\langle H,H^{\cM} \rangle^{\BC'}$. The
computations performed in (3) are deterministic and discussed in
\cref{ssec:GoingUpPhase3}.
\end{enumerate}
\end{remark}

In each section the task described in \cref{TasksOfPhases} is divided
into more $\steps$ which are discussed in detail in the corresponding
section and labelled as \ref{SL1} to \ref{SL7}. Lastly, in
\cref{ssec:GoingUpStep} the $\steps$ \ref{SL1} to \ref{SL7} are combined
into a single algorithm for the \GoingUp\ step. Proving its correctness
yields the proof for \cref{DoublingTheDimension}.

\subsection{Construction of a dimension doubling element} \label{ssec:GoingUpPhase1}

The goal in this section is the construction of a doubling element,
i.e.\ an element $\cM \in G^{\BC}$ fulfilling the properties \ref{C1} and
\ref{C2} and fixing $v_n$, leading to \algoref{ComputeDoublingElement}.
Recall from \cref{VariablesGoingUpLinearAlgebra} the setting
for this section. We give a condensed version of what is achieved.

\begin{remark}\label{StepOneToThreeOfSL}\ 
\begin{enumerate}[label={(SL\arabic*)},leftmargin=*]
\item[\SLFi] Construct an element $ \GUE \in H $ which has a fixed space
of dimension $ d - n + 1 $. This can be done easily as we have standard
generators $Y_n$ for $H$.
\item[\SLSe] Choose random elements $ \cE \in G^{\BC} $ until $ \cT :=
\GUE^{\cE} $ 
is a weak doubling element.
\item[\SLTh] Find a conjugate $\cM \in G^{\BC}$ of $\cT$ which 
is a doubling element.
\end{enumerate}
\end{remark}

Note that the \GoingUp\ algorithm is randomised as in \ref{SL2} we search
for random $\cE \in G^{\BC}$ such that $\cT := \GUE^{\cE}$ fulfils the
two properties \ref{C1} and \ref{C2} of \cref{IdeaOfGoingUpBasicStepSL}. We
do not analyse the proportion of usable elements in this section,
only prove the correctness. 

The following lemma shows how \ref{SL1} is carried out in the solution of
this publication. Note that there are multiple other choices for $\GUE
\in H$ having a fixed space of dimension $d-n+1$. Recall the permutation
matrices $\PME$ and $\PMZ$ from \cref{StandardGeneratorsSL} which are
permutation matrices corresponding to $n$ and $n-1$ cycles respectively.

\begin{lemma}\label{Step1}
An element of $H$ which has a fixed space of dimension $d-n + 1$ is given by
\[
\GUE :=
\begin{cases}
E_{1,2}(1), &\text{ if } n = 2, \\
\PME, &\text{ if } n > 2 \text{ and } p \text{ is even}, \\
\PMZ, &\text{ if } n > 2 \text{ and } p \text{ is odd}. \\
\end{cases}
\]
\end{lemma}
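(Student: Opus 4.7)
The plan is to verify each of the three candidate elements lies in $H$ and to compute its fixed space directly, exploiting the stingray embedding of $H$. Since $H$ acts on $V_n = \langle v_1, \dots, v_n\rangle$ as $\SL(n,q)$ and fixes $F_{d-n} = \langle v_{n+1}, \dots, v_d\rangle$ pointwise (by \cref{VariablesGoingUpLinearAlgebra}), every element of $H$ contains $F_{d-n}$ in its fixed space. It therefore suffices to show, in each of the three cases, that the chosen element restricted to $V_n$ has a one-dimensional fixed space; adding the $(d-n)$-dimensional contribution from $F_{d-n}$ then gives the required total $d-n+1$. Membership in $H$ is immediate because each candidate appears in the standard generating set $Y_n$ of $H$ (\cref{StandardGeneratorsSL}).

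First I would handle $n = 2$: the element $E_{1,2}(1)$ is the transvection $T_{v_1, v_2}$, whose fixed space on $V_2$ is the hyperplane $\{x \in V_2 : \langle x \mid v_1\rangle = 0\} = \langle v_2\rangle$, of dimension one. Next, for $n > 2$ and $p$ even, the ambient hypothesis of \cref{DoublingTheDimension} forces $n$ to be odd, so the definition of $\PME$ prescribes no sign modification (and in any case $-1 = 1$ in characteristic two). Consequently $\PME$ restricted to $V_n$ is the pure permutation matrix of the $n$-cycle $(1, n, n-1, \dots, 2)$, whose fixed space is spanned by $v_1 + \dots + v_n$, hence one-dimensional.

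The last case $n > 2$ and $p$ odd is the most delicate, and is where I expect the main bookkeeping effort to lie. Again $n$ is odd by the ambient hypothesis, so $n - 1$ is even and $(\PMZ)_{2,n}$ is changed to $-1$. The element $\PMZ$ fixes $v_1$, as $1$ does not occur in the cycle $(2, n, n-1, \dots, 3)$, and on $\langle v_2, \dots, v_n\rangle$ a direct read-off of the matrix entries shows that $\sum_{i=2}^n w_i v_i$ is fixed if and only if $w_3 = w_2,\ w_4 = w_3,\ \dots,\ w_n = w_{n-1}$ together with $w_2 = -w_n$; these relations force $2 w_n = 0$, so since $p$ is odd all $w_i$ vanish. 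Thus the fixed space of $\PMZ$ on $V_n$ is exactly $\langle v_1\rangle$, and the three cases together yield a fixed space of dimension $d-n+1$ in $V$.
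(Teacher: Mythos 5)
Your proof is correct and follows the same route as the paper's (which simply asserts the fixed spaces $\langle v_2,\dotsc,v_d\rangle$, $\langle v_1+\dotsb+v_n,v_{n+1},\dotsc,v_d\rangle$ and $\langle v_1,v_{n+1},\dotsc,v_d\rangle$ without the coordinate computations you supply): split off the pointwise-fixed $F_{d-n}$ and verify that the fixed space on $V_n$ is one-dimensional in each case, using that $n$ is odd and, in the $\PMZ$ case, that $2w_n=0$ forces $w_n=0$ when $p$ is odd. The only nitpick is that $E_{1,2}(1)$ need not literally appear in $Y_n$ unless $\prel_1=1$; it is of course a word in the generators $E_{1,2}(\prel_i)$, so membership in $H$ is unaffected.
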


\begin{proof}
If $n = 2$, then we take $\GUE = E_{1,2}(1)$. Its fixed space is
$\langle v_2,\dotsc,v_d \rangle$ and thus has dimension $d-1 = d -2 +1 =
d - n + 1$. If $n > 2$, then by assumption in
\cref{DoublingTheDimension} $n$ is odd. For $\GUE$ we take $\PMZ$ a $(n
- 1)$-cycle if the characteristic $p$ is odd and $\PME$ a $n$-cycle if
$p = 2$. The fixed space is $\langle v_1,v_{n+1},v_{n+2},\dotsc,v_d
\rangle$ in the first case and $\langle v_1 + v_2 + \dotsc
+v_n,v_{n+1},\dotsc,v_d \rangle$ in the second case.
\end{proof}

The element $\GUE$ is “constructed” as an MSLP in the given standard
generators of $\SL(n, q)$, but can also be written explicitly as a
matrix of $\langle X \rangle^\BC$ at a cost of $O(d^2)$.

Now that $\GUE$ has a fixed space of dimension $d-n+1$, we choose a
random $\cE \in G^\BC$ and check whether $\cT = \GUE^{\cE}$ fulfils
the properties \ref{C1} and \ref{C2} of \cref{IdeaOfGoingUpBasicStepSL}.
We repeat this until $\cT$ has the required properties.

\begin{remark}
Verifying \ref{C1} and \ref{C2} of \cref{IdeaOfGoingUpBasicStepSL} is
one of the few places in \GoingUp\ step where we must multiply $(d
\times d)$-matrices, since $\cE$ is created as a matrix in the same
input basis as $X$, and we must construct the matrix of $\cE$ and $\cT$
as an element of $\langle X \rangle^\BC$. This is necessary to check
\ref{C1} and \ref{C2}.
\end{remark}

\begin{lemma}\label{SomeResultsOnVnc}
Let $\GUE$ be as in \cref{Step1}.
\begin{enumerate}[label={(\arabic*)}]
\item If $\cT := \GUE^{\cE}$ is a weak doubling element, then $\dim(V_n
\cap \Fix(\cT))\geq 1$ and $\dim(V_n \cap \Fix(\cT)) = 1$ if $n' < d$.
\item Then there is a $\cE \in G^\BC$ such that $\cT := \GUE^{\cE}$ is a
weak doubling element, i.e.\ $\cT$ satisfies \ref{C1} and \ref{C2}.
\item If $\cT := \GUE^{\cE}$ is a weak doubling element, then $V_{n'}$
is invariant under the action of $\cT$.
\item If $\cT := \GUE^{\cE}$ is a weak doubling element and $n' < d$,
then $\dim(F_{d-n} \cap \Fix(\cT)) = d-n'$.
\end{enumerate}
\end{lemma}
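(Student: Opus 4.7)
My plan is to prove parts (1), (3), (4) by dimension counting, using the fact that $\dim\Fix(\cT) = \dim\Fix(\GUE) = d-n+1$ (preserved by conjugation), and to tackle the existence statement (2) by exhibiting an explicit $\cE$.

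For (1), the subspace dimension formula gives $\dim(V_n \cap \Fix(\cT)) \geq n + (d-n+1) - d = 1$. When $n' < d$ we have $n' = 2n-1$; moreover every $v \in V_n \cap \Fix(\cT)$ satisfies $v = v\cT \in V_n\cT$, giving the inclusion $V_n \cap \Fix(\cT) \subseteq V_n \cap V_n\cT$, and \ref{C1} forces the right-hand side to have dimension $2n - n' = 1$, yielding equality. Part (4) is a direct application of \ref{C2}: $\dim(F_{d-n} \cap \Fix(\cT)) = (d-n) + (d-n+1) - d = d - (2n-1) = d - n'$. For (3), I would establish that $V_n + V_n\cT$ coincides with $V_n + V(\cT - I)$, where $V(\cT-I)$ denotes the image of $\cT - I$. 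The latter subspace is $\cT$-invariant, since for any $w = u(\cT-I) \in V(\cT-I)$ we have $w\cT = u(\cT-I)\cT = u\cT(\cT-I) \in V(\cT-I)$; and $V_n\cT \subseteq V_n + V(\cT-I)$ because $v\cT = v + v(\cT-I)$ for $v \in V_n$. So $V_n + V(\cT-I)$ is $\cT$-invariant, and the inclusion $V_n + V_n\cT \subseteq V_n + V(\cT-I)$ is immediate. Equality then follows from $\dim V(\cT - I) = n - 1$, which bounds the larger space by $2n - 1 = n'$ when $n' < d$, and makes both sides equal to $V$ when $n' = d$.

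The main obstacle is (2), because a single $\cE$ must realize both \ref{C1} and \ref{C2} simultaneously. I would construct $\cE$ explicitly as a product of elementary matrices in $\SL(d,q)$. A first attempt $\cE_0 = \prod_{i=1}^{n-1} E_{i, n+i}(1)$ perturbs $V_n$ enough to secure \ref{C1}, but fails \ref{C2} because $\Fix(\GUE)$ is preserved. This is corrected by composing $\cE_0$ with further elementary matrices that rotate the $1$-dimensional $\GUE$-fixed direction inside $V_n$ out of $F_{d-n}$, without breaking \ref{C1}. A direct computation of $V_n\cT$ and $\Fix(\cT) = \Fix(\GUE)\cE$ in the resulting basis then verifies both conditions. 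Since the lemma demands only existence (not a proportion estimate), exhibiting one such $\cE$ completes the proof.
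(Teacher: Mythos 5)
Your treatments of (1) and (4) coincide with the paper's: both are the same dimension counts from $\dim\Fix(\cT)=\dim\Fix(\GUE)=d-n+1$, together with the inclusion $V_n\cap\Fix(\cT)\subseteq V_n\cap V_n\cT$ and \ref{C1} for the upper bound in (1). For (3) you take a genuinely different and arguably cleaner route: the paper proves invariance of $V_{n'}$ by explicitly building a basis (a vector of $V_n\cap\Fix(\cT)$, extended to a basis of $V_n$, extended by vectors of $V_{n'}\cap\Fix(\cT)$), whereas you identify $V_{n'}=V_n+V_n\cT$ with $V_n+\operatorname{im}(\cT-I)$ and observe that the latter is manifestly $\cT$-invariant. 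The identification is correct: $\operatorname{im}(\cT-I)$ has dimension $n-1$ by rank--nullity, the containment $V_n+V_n\cT\subseteq V_n+\operatorname{im}(\cT-I)$ is immediate from $v\cT=v+v(\cT-I)$, and the dimension count (resp.\ the case $n'=d$) forces equality. Your argument avoids the chain of intersection computations in the paper and handles both cases $n'=2n-1$ and $n'=d$ uniformly; the paper's basis, on the other hand, is reused later in the construction of $\BC'$, so it is not wasted work there.

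The gap is in (2). The paper discharges the existence claim by writing down one concrete element, $\cE=\operatorname{diag}(\tilde a, I_{d-n'})$ with $\tilde a$ an explicit $n'\times n'$ matrix, and then exhibits the witness vector $e_1+\dotsb+e_n\in V_n\cap\Fix(\cT)$ to verify the conditions. You instead start from $\cE_0=\prod_{i=1}^{n-1}E_{i,n+i}(1)$, correctly diagnose that it fails \ref{C2} because $\Fix(\GUE)\cE_0=\Fix(\GUE)$, and then assert that this ``is corrected by composing $\cE_0$ with further elementary matrices that rotate the $1$-dimensional $\GUE$-fixed direction inside $V_n$ out of $F_{d-n}$, without breaking \ref{C1}.'' That correction is the entire content of the claim and is never specified: you do not say which elementary matrices, you do not verify that the composite still satisfies \ref{C1} (conjugation acts on $V_n\cT=V_n\cE^{-1}\GUE\cE$ in a way that the extra factors can disturb), and the promised ``direct computation'' is not performed. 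Note also that $\cE_0$ itself is only defined when $2n-1\le d$; the case $n'=d<2n-1$, where fewer than $n-1$ directions are available outside $V_n$, is not addressed. As it stands, (2) is a plan rather than a proof; to close it you must exhibit one fully specified $\cE$ (for both cases of $n'$) and check \ref{C1} and \ref{C2} for it, exactly as the paper does with its explicit $\tilde a$.
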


\begin{proof}\
\begin{enumerate}[label={(\arabic*)}]
\item Since $\cT$ and $\GUE$ are conjugate we know that $\dim(\Fix(\cT))
    = \dim(\Fix(\GUE)) = d-n+1$. Note that $\dim(V_n + \Fix(\cT)) \leq
    \dim(V) = d$. Hence,
    \begin{align*}
    \dim(V_n \cap \Fix(\cT))
    &= \dim(V_n) + \dim(\Fix(\cT)) - \dim(V_n + \Fix(\cT)) \\
    &= n+(d-n+1)-\dim(V_n + \Fix(\cT)) \\
    &= d+1 -\dim(V_n + \Fix(\cT)) \geq 1.
    \end{align*}
    Now let $n' < d$, i.e. $n' = 2n - 1$. Then using \ref{C1} of
    \cref{IdeaOfGoingUpBasicStepSL} it follows that
    \begin{align*}
    \dim(V_n \cap V_n \cT)
    &= \dim(V_n) + \dim(V_n \cT) - \dim(V_n + V_n \cT) \\
    &= n+n - n'\\
    &= n + n - (2n -1) = 1.
    \end{align*}
    Notice that $V_n \cap \Fix(\cT) \subseteq V_n \cT$ and thus
    \[ \dim(V_n \cap \Fix(\cT)) \leq \dim(V_n \cap V_n \cT) = 1. \]
    Since $\dim(V_n \cap \Fix(\cT)) \geq 1$, the result follows.
\item Note that an element fulfilling the properties \ref{C1} and \ref{C2} exists as 
for example matrices of the form $\cE =\operatorname{diag}(\tilde{a},I_{d-n'})$ with 
\[
\begin{tikzpicture}[line cap=round,line
join=round,x=1cm,y=1cm,baseline={-0cm-0.5*height("$=$")},mymatrixenv]
\matrix[mymatrix] (m)  {
  0 & {\hdots} & 1 & 0& 0 & {\hdots} & 1\\
  \vdots & {\ddots} & {\vdots} & {\vdots} & {\vdots} & {\ddots} & {\vdots} \\
  1 & \hdots & 0 & 0 & 1 & \hdots & 0 \\
  0& {\hdots} & 0 & 1 & 0 & {\hdots} & 0 \\
  0 & {\hdots} & 0 & 0 & 1 & {\hdots} &0 \\
 {\vdots} &{\ddots} & {\vdots} & {\vdots} & {\vdots} & {\ddots} & {\vdots} \\
  0 & {\hdots} & 0 & 0 & 0 & {\hdots} & 1 \\
};
\mymatrixbraceleft{1}{3}{$n-1$}
\mymatrixbraceleft{4}{4}{$1$}
\mymatrixbraceleft{5}{7}{$n'-n$}
\mymatrixbracetop{1}{3}{$n-1$}
\mymatrixbracetop{4}{4}{$1$}
\mymatrixbracetop{5}{7}{$n'-n$}
\end{tikzpicture}
\eqqcolon \tilde{a}
\]
can be chosen. It is clear that $\cT = \GUE^{\cE}$ fulfils \ref{C1}. Note 
that $e_1 + e_2 + \dotsc + e_n \in V_n \cap \Fix(\cT)$ and thus \ref{C2}
also holds as $\dim(F_{d-n} + \Fix(\cM)) = d$ is equivalent to 
$\dim(F_{d-n} \cap \Fix(\cM)) \geq 1$ by 1).
\item If $n' = d$, then the statement is trivial. Let us assume that $n'
    = 2n-1 < d$. We construct a basis of $V_{n'}$ which shows that $V_{n'}$
    is invariant under the action of $\cT$. A basis of $V_{n'}$ consists of
    $n'$ elements since $\dim(V_{n'}) = n'$ by \ref{C1}. First notice that
    using (2) it follows that
    \begin{align*}
    \dim(V_n + \Fix(\cT))
    &= \dim(V_n) + \dim(\Fix(\cT)) - \dim(V_n \cap \Fix(\cT)) \\
    &= n + (d-n+1) - 1 = d.
    \end{align*}
    Therefore, $V_n + \Fix(\cT) = V$. Since $V_n \leq V_{n'}$, it follows
    that $V_{n'} + \Fix(\cT) = V$ which implies that
    \begin{align*}
    \dim(V_{n'} \cap \Fix(\cT))
    &= \dim(V_{n'}) + \dim(\Fix(\cT)) - \dim(V_{n'} + \Fix(\cT)) \\
    &= (2n-1) + (d-n+1) -d = n.
    \end{align*}
    Since $\dim(V_n \cap (V_{n'} \cap \Fix(\cT)) = \dim(V_n \cap \Fix(\cT))
    = 1$, it follows that
    \begin{align*}
    \dim(V_n + (V_{n'} \cap \Fix(\cT)) )
    &= \dim(V_n) + \dim(V_{n'} \cap \Fix(\cT)) - \dim(V_n \cap (V_{n'} \cap \Fix(\cT))) \\
    &= n + n - 1 = 2n -1 = n'.
    \end{align*}
    Since $V_n + (V_{n'} \cap \Fix(\cT)) \leq V_{n'}$, it follows that $V_n
    + (V_{n'} \cap \Fix(\cT)) = V_{n'}$. Therefore, we can choose a basis of
    $V_{n'}$ as follows:
    \begin{itemize}
    \item Choose a non-zero vector $v_1 \in V_n \cap \Fix(\cT)$.
    \item Select $n-1$ vectors $v_2, \dotsc, v_n \in V_n$ to extend this to
    a basis of $V_n$.
    \item Choose $n-1$ vectors from $V_{n'} \cap \Fix(\cT)$ to extend this
    to a basis of $V_{n'}$.
    \end{itemize}
    Clearly this basis is invariant under the action of $\cT$ as either $v_i
    \cT = v_i$ or $v_i \cT \in V_n \cT \leq V_{n'}$.
\item If $n' < d$, then $\dim(F_{d-n} + \Fix(\cT)) = d$ by \ref{C2}.
Hence, as claimed,
\begin{align*}
\dim(F_{d-n} \cap \Fix(\cT))
&= \dim(F_{d-n}) + \dim(\Fix(\cT)) - \dim(F_{d-n} + \Fix(\cT)) \\
&= (d-n) + (d-n+1) - d \\
&= d-2n+1 = d-n'. \qedhere
\end{align*}
\end{enumerate}
\end{proof}

\begin{remark}
If $n' = d < 2n-1$, then it is possible that $\dim(V_n \cap \Fix(\cT)) >
1$ which is important in \cref{Step4SL}.
\end{remark}

\begin{corollary}
$V_n + V_n \cT = V_n + V_n \cT^{-1}$.
\end{corollary}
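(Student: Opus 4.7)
The plan is to show that both $V_n + V_n \cT$ and $V_n + V_n \cT^{-1}$ coincide with the subspace $V_{n'}$. The case $n' = d$ is immediate, since property \ref{C1} forces both sides to equal $V$ (the reverse direction also relies on the dimension computation below). So I would focus on the case $n' < d$.

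First I would invoke part (3) of \cref{SomeResultsOnVnc}, which states that $V_{n'} = V_n + V_n \cT$ is invariant under $\cT$. Since $V_{n'}$ is finite-dimensional, this yields $V_{n'} \cT^{-1} = V_{n'}$, so in particular $V_n \cT^{-1} \subseteq V_{n'}$ and therefore $V_n + V_n \cT^{-1} \subseteq V_{n'}$.

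For the reverse inclusion I would use a dimension argument. The linear map $v \mapsto v\cT$ restricts to a bijection from $V_n \cap V_n \cT^{-1}$ onto $V_n \cT \cap V_n$: if $v \in V_n$ and $v = w \cT^{-1}$ with $w \in V_n$, then $v\cT = w \in V_n$ and $v\cT \in V_n\cT$; conversely if $u \in V_n \cap V_n\cT$ then $u\cT^{-1} \in V_n\cT^{-1}$ and $u\cT^{-1} \in V_n$ since $u \in V_n\cT$. Hence
\[
\dim(V_n \cap V_n \cT^{-1}) = \dim(V_n \cap V_n \cT) = 2n - n',
\]
where the last equality comes from property \ref{C1}. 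Consequently
\[
\dim(V_n + V_n \cT^{-1}) = 2n - \dim(V_n \cap V_n \cT^{-1}) = n'.
\]
Thus $V_n + V_n \cT^{-1}$ is an $n'$-dimensional subspace of the $n'$-dimensional space $V_{n'}$, so the two must be equal.

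There is no real obstacle here: everything follows from the $\cT$-invariance of $V_{n'}$ already proved in \cref{SomeResultsOnVnc}(3), combined with the standard bijection-of-intersections dimension count. The only mild point requiring care is the degenerate case $n' = d$, but property \ref{C1} disposes of it at once.
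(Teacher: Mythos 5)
Your proof is correct and rests on the same key fact as the paper's, namely the $\cT$-invariance of $V_{n'}$ from \cref{SomeResultsOnVnc}(3). The paper gets the conclusion in one line by applying $\cT^{-1}$ to the whole equality $V_{n'} = V_{n'}\cT^{-1} = (V_n + V_n\cT)\cT^{-1} = V_n\cT^{-1} + V_n$, which makes your dimension count (and the case split on $n'=d$) unnecessary.
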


\begin{proof}
Note that $V_n + V_n \cT = V_{n'}
\stackrel{\ref{SomeResultsOnVnc}~(3)}{=} V_{n'} \cT^{-1}
= (V_n + V_n \cT) \cT^{-1} = 
V_n \cT^{-1} + V_n$.
\end{proof}

\begin{remark}
Note that there is one more condition \ref{C3} which we can only be
formulate in \cref{ssec:GoingUpPhase2}. If that condition is not
satisfied, then we restart from \ref{SL2} and try another element $\cE$.
\end{remark}

Our next goal is to construct a conjugate $\cM$ of $\cT$ such that $\cM$
is a doubling element. We seek an element $\GUFE \in H$ with $v_n \GUFE
= v$, where $0 \neq v \in V_n \cap \Fix(\cT)$, and write $\GUFE$ as a
word in the standard generators $Y_n$ and finally compute $\cM := \GUFE
\cT \GUFE^{-1}$. It is well-known that $\SL(d,q)$ acts doubly transitive
on the 1-dimensional subspaces of $\F_q^d$ (see e.g.~\cite[Theorem 4.1]{Taylor}).
This implies the existence of
an element $\GUFE \in H$ with the described properties, \cref{GoingUpStepThree} shows that $\cM$ is a doubling element and lastly
we describe how $\GUFE$ can be found. We start by proving that $\GUFE
\in H \cong \SL(n,q)$ exists.

Next we show that $\cM := \GUFE \cT \GUFE^{-1}$ is a doubling element.

\begin{lemma}\label{GoingUpStepThree}
Let $\GUE$ and $G$ and $H$ be as in \cref{Step1} and $\cT :=
\GUE^{\cE}$ a weak doubling element for a random element $\cE \in
G^\BC$. Let $\GUFE \in H$ such that $v_n \GUFE = v$ where $0 \neq v \in
V_n \cap \Fix(\cT)$. Then
\begin{enumerate}[label={(\arabic*)}]
\item $\GUFE \cT \GUFE^{-1}$ fulfills \ref{C1} and \ref{C2}.
\item $v_n \in V_n \cap \Fix(\GUFE \cT \GUFE^{-1})$.
\end{enumerate}
\end{lemma}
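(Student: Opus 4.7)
The plan is to exploit two structural properties of $\GUFE$ that come for free from its membership in $H$: because $H$ is stingray embedded in $G^{\BC}$ (see \cref{VariablesGoingUpLinearAlgebra}), every element of $H$, and in particular $\GUFE$, preserves $V_n$ setwise and fixes $F_{d-n}$ pointwise. These two facts are the entire content of the lemma; everything else will be a short formal computation with the conjugation formula $\cM = \GUFE\,\cT\,\GUFE^{-1}$.

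For part~(2) I would just unwind the definition. Since $v := v_n\GUFE$ lies in $V_n \cap \Fix(\cT)$ by the choice of $\GUFE$, applying $\cM$ to $v_n$ takes $v_n \mapsto v \mapsto v \mapsto v\GUFE^{-1} = v_n$, where the middle step uses $v \in \Fix(\cT)$ and the last uses the definition of $v$. Thus $v_n \in \Fix(\cM)$, and since trivially $v_n \in V_n$ this yields part~(2).

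For part~(1) I would reduce the two conditions \ref{C1} and \ref{C2} for $\cM$ to the ones already known for $\cT$. Working in the row-vector convention, a short check gives $\Fix(\cM) = \Fix(\cT)\,\GUFE^{-1}$ (from $x\cM=x \iff x\GUFE \in \Fix(\cT)$) and $V_n\cM = V_n\GUFE\,\cT\,\GUFE^{-1} = V_n\,\cT\,\GUFE^{-1}$, using $V_n\GUFE = V_n$. Combining these with $V_n\GUFE^{-1}=V_n$ and $F_{d-n}\GUFE^{-1}=F_{d-n}$ lets me factor $\GUFE^{-1}$ out of both sums, so that
\[
V_n + V_n\cM \;=\; (V_n + V_n\cT)\,\GUFE^{-1}, \qquad F_{d-n} + \Fix(\cM) \;=\; (F_{d-n} + \Fix(\cT))\,\GUFE^{-1}.
\]
Since $\GUFE^{-1}$ is a linear isomorphism, taking dimensions and invoking \ref{C1} and \ref{C2} for the weak doubling element $\cT$ yields $\dim(V_n + V_n\cM) = n'$ and, when $n' < d$, $\dim(F_{d-n} + \Fix(\cM)) = d$, establishing part~(1).

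The only mildly delicate point, and what I would call the ``main obstacle'' if anything, is keeping the right-action conjugation convention consistent so that $\Fix(\GUFE\,\cT\,\GUFE^{-1})$ comes out as $\Fix(\cT)\,\GUFE^{-1}$ rather than $\GUFE\,\Fix(\cT)$. Once that is pinned down, both statements collapse to one-line computations built on $\GUFE$ preserving $V_n$ and acting trivially on $F_{d-n}$, so I do not anticipate a genuine difficulty.
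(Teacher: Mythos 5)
Your proposal is correct and follows essentially the same route as the paper: both parts reduce to the facts that $\GUFE \in H$ preserves $V_n$ and fixes $F_{d-n}$, so that applying the linear isomorphism $\GUFE^{-1}$ to $V_n + V_n\cT$ and to $F_{d-n} + \Fix(\cT)$ preserves dimensions, while part~(2) is the same direct computation $v_n\GUFE\cT\GUFE^{-1} = v\cT\GUFE^{-1} = v\GUFE^{-1} = v_n$. Your explicit identification $\Fix(\GUFE\cT\GUFE^{-1}) = \Fix(\cT)\GUFE^{-1}$ is just a slightly more careful phrasing of the paper's argument.
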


\begin{proof}
\begin{enumerate}[label={(\arabic*)}]
\item Since $\GUFE \in H$, it follows that $V_n \GUFE = V_n$ and so
\[ \dim(V_n + V_n \GUFE \cT \GUFE^{-1})
= \dim(V_n \GUFE + V_n \GUFE \cT)
= \dim(V_n + V_n \cT) = n' \]
and \ref{C1} holds. Moreover, $\GUFE F_{d-n} \GUFE^{-1} = F_{d-n}$ (notice
$\GUFE \in H$) and so
\[ \dim(\Fix(\GUFE \cT \GUFE^{-1}) + F_{d-n})
= \dim(\Fix(\GUFE \cT \GUFE^{-1}) + \GUFE F_{d-n} \GUFE^{-1})
= \dim(\Fix(\cT) + F_{d-n}) = d \]
and \ref{C2} holds.
\item $v_n \in V_n$ and $v_n \GUFE \cT \GUFE^{-1} = v \cT \GUFE^{-1} = v
\GUFE^{-1} = v_n$ since $0 \neq v \in V_n \cap \Fix(\cT)$.
\qedhere
\end{enumerate}
\end{proof}

\begin{remark}\label{Step3SL}
We proceed as follows to find $\GUFE$ fulfilling the hypothesis of \cref{GoingUpStepThree}:
\begin{itemize}
\item Let $0 \neq v \in V_n \cap \Fix(\cT)$ and $v = \sum_{j=1}^{n}
\lambda_j v_j$ for $\lambda_j \in \F$. This is possible since $v \in
V_n$ and $(v_1,\dotsc,v_n)$ is a basis of $V_n$.
\item If $\lambda_n \neq 0$, then we multiply $v$ by $\lambda_n^{-1}$
such that without loss of generality we may assume $\lambda_n = 1$. This
allows us to write $\GUFE$ as a product of elements of the form
$E_{n,j}(\prel_i)$ for $1 \leq i \leq f$ and $1 \leq j \leq n-1$ which
are easily expressed in terms of the standard generators $Y_n$ using a rewriting procedure \cite{Costi,MSLP2024}.
\item If $\lambda_n = 0$, then we choose $j$ with $\lambda_j \neq 0$ and
then a $\GUFE' \in H$ with $v_n \GUFE' = vE_{j,n}(1)$ as above
(replacing $v$ with a scalar multiple as needed). Finally we set $\GUFE
:= \GUFE' E_{j,n}(1)^{-1}$.
\end{itemize}
\end{remark}

\begin{remark}\label{NoteToStep3}
Notice that the matrices $E_{1,2}(\prel_i)$ and $E_{2,1}(\prel_i)$ are
given as elements of $\langle X \rangle^\BC$ which contains the stingray
embedded subgroup $H$ isomorphic to $\SL(n,q)$ and standard generators
$Y_n$ can be written as words in $X^\BC$ as described in \cref{VariablesGoingUpLinearAlgebra}.
\end{remark}

\begin{algorithm}[H]
\caption{ComputeDoublingElement}
\label{ComputeDoublingElement}
\small{
{\nlnonumber
\IOKw{
\begin{tabular}{@{}ll}
		\KwIn{}& {\InputT} {\normalfont $\MyIn{\langle X \rangle = G} \leq \SL(d,q)$}\\
        &{\InputT} {\normalfont A base change matrix $\MyIn{\BC} \in \GL(d,q)$ }\\
        &{\InputT} {\normalfont $\SL(n,q) \cong \MyIn{ \langle Y_n \rangle = H} \leq \MyIn{G}^{\MyIn{\BC}}$ stingray embedded and constructively recognised}\\
        &{\InputT} {\normalfont $\MyIn{\MN} \in \N$}\\
        \KwOut{}& \MyFail{fail} OR $\MyOut{(\cM,\slp,\MN')}$ where \\
        & {\OutputT} {\normalfont $\MyOut{\cM} \in \MyIn{G}^{\MyIn{\BC}}$ is a doubling element,} \\
        & {\OutputT} {\normalfont $\MyOut{\slp}$ is an MSLP from $\MyIn{X \cup Y_n}$ to $\MyOut{\cM}$ and} \\
        & {\OutputT} {\normalfont $\MyOut{\MN'} \in \N$ where $\MyIn{\MN}-\MyOut{\MN'}$ is the number of random selections that were used} \\
\end{tabular}
} \\
}

\nlnonumber
\LinesNumbered
\setcounter{AlgoLine}{0}
\Begin($\FuncSty{ComputeDoublingElement} {(} \MyInT{G, H, \BC, \MN} {)}$)
{
  Choose $\GUE \in H$ as described in \cref{Step1} \tcp*[r]{\scriptsize{\ref{SL1}}}
  \Repeat(\tcp*[f]{\scriptsize{Start of \ref{SL2}}}){$\cT$ satisfies \ref{C1} and \ref{C2}}
  {
  	$\MN \asgn \MN - 1$ \;
  	\uIf{$\MN < 0$}
  	{
		\Return $\MyOutTT{\fail}$ \;
  	}
  	$\cT \asgn \GUE^{\cE}$ for random $\cE \in G^{\BC}$ \;
  }
    $\slp_1 \asgn$ MSLP from $X \cup Y_n$ to $\cT$ \;
  $\GUFE \asgn$ as described in \cref{Step3SL} \; $\slp_2 \asgn$ MSLP from $Y_n$ to $\GUFE$ \tcp*[r]{\scriptsize{\ref{SL3}}}
  $\cM \asgn \GUFE \cT \GUFE^{-1}$ \; $\slp \asgn $ Compose $\slp_1$ and $\slp_2$ \;
   \Return $\MyOutT{(\cM,\slp,\MN)}$ \;
}
}
\end{algorithm}

\subsection{Construction of a new base change matrix} \label{ssec:GoingUpPhase2}
The goal of this section is the construction of a new base change matrix
$\BC'$ such that $\langle H,H^{\cM} \rangle^{\BC'}$ is stingray embedded
in $G^{\BC \BC'}$. Everything done in this section is deterministic.
Recall from \cref{VariablesGoingUpLinearAlgebra} the setting
for this section and that $\cM \in G^{\BC}$ is a doubling element, i.e.\
satisfies \ref{C1} and \ref{C2} and fixes $v_n$ as described in
\cref{ssec:GoingUpPhase1}. This section only covers one $\step$ as
described in the next remark.

\begin{remark}\label{StepFourOfSL}\ 
\begin{itemize}
\item[\SLFo] Compute a base change matrix $\BC'$ such that $\langle
H,H^{\cM} \rangle^{\BC'}$ is stingray embedded in $G^{\BC \BC'}$.
\end{itemize}
\end{remark}

The goal of \ref{SL4} is to compute a base change matrix $\BC' \in
\GL(d,q)$ such that $\langle H, H^{\cM} \rangle^{\BC'}$ is stingray
embedded in $G^{\BC \BC'}$ and if $\langle H, H^{\cM} \rangle \cong
\SL(n',q)$, then standard generators of $\langle H, H^{\cM}
\rangle^{\BC'}$ can be constructed using the steps \ref{SL5} to \ref{SL7}
described in \cref{ssec:GoingUpPhase3}. Note that we formulate an
additional condition \ref{C3} in this section which can only be defined
after the construction of $\BC'$. If $\cM$ does not satisfy \ref{C3}, then
we have to chose a new random element $\cE \in G^\BC$ and restart from
$\step$ \ref{SL2}. If $\cM$ satisfies \ref{C3}, then we can conclude in
\cref{ssec:GoingUpPhase3} that $\langle H, H^{\cM} \rangle \cong
\SL(n',q)$ and construct standard generators of $\langle H, H^{\cM}
\rangle$.

\begin{remark}\label{Step4SL}
The base change matrix $\BC'$ is computed by constructing a specific
basis of $\F_q^d$. Let $\pi \colon V \to F_{d-n}$ be the projection map
to the second summand of $V = V_n \oplus F_{d-n}$.
\begin{itemize}
\item The first $n$ vectors of the new basis are equal to the vectors in
the old basis, i.e. $v_i' := v_i$ for $1\leq i\leq n$.
\item The vectors $v_{n +1}', \dotsc , v_{n'}'$ of the new basis are
chosen as a linearly independent subset of the vectors $\pi(v_1
\cM),\dotsc , \pi(v_{n-1} \cM)$. Notice that this is possible since
$\dim(V_n + V_n \cM) = n'$ and $v_{n}' =v_n \in V_n$ is fixed by $\cM$.
If $n' < d$, then we take all the vectors $\pi(v_1 \cM),\dotsc ,
\pi(v_{n-1} \cM)$ and otherwise we choose a linearly independent subset.
\item Finally, in the case $n' < d$, that is, $n' = 2n-1$, we extend
$(v_1' ,\dotsc,v_{n'} ')$ to a basis $\MB' = (v_1' ,\dotsc,v_d' )$ of
$V$ by choosing a basis of $F_{d-n} \cap \Fix(\cM)$ which is possible by
condition \ref{C2}.
\end{itemize}
\end{remark}

The matrix $\BC' \in \GL(d,q)$ is now chosen to be the base change
matrix between $\MB$ and $\MB'$. Note that \ref{C1} and \ref{C2} ensure that
$\langle H,H^{\cM} \rangle$ can be stingray embedded in $G^{\BC}$ using
\cref{Step4SL}.

\begin{remark}
\begin{enumerate}[label={(\arabic*)}]
\item $V_n = \langle v_1',\dotsc,v_n' \rangle$.
\item $V_n + V_n \cM = \langle v_1',\dotsc,v_{n'}' \rangle$.
\item $\langle v_{n+1}',\dotsc,v_d' \rangle \subseteq \Fix(H)$
\item Note that (1) and (3) ensure that for our stingray embedded
subgroup $H \cong \SL(n, q)$ in $G^\BC$ also $H^{\BC'}$ is a stingray
embedded subgroup in $G^{\BC \BC'}$.
\end{enumerate}
\end{remark}

From this point on, we assume that all of our matrices are expressed as
elements of $G^{\BC \BC'}$. For the standard generators of $\SL(n,q)$,
there is nothing to do, while the matrix of $\cM$ must be conjugated by
the base change matrix $\BC'$ which we denote by $\cH := \cM^{\BC'}$.

At this point we formulate the last condition \ref{C3}. If \ref{C3} is
satisfied by $\cH$, then we can conclude that $\langle H, H^{\cM}
\rangle \cong \SL(n',q)$ in \cref{ssec:GoingUpPhase3}. The necessity for
condition \ref{C3} is discussed in more detail in
\cref{ssec:GoingUpPhase3}.

\begin{remark}\label{LastConditionOnCForSL}
The final condition on $\cH$ is the following.
\begin{enumerate}[label={(C\arabic*)},leftmargin=*,start=3]
\item\label{C3} The vectors $\prel_i v_j'$ and $\prel_i v_j' \cH^{-\tr}$
for $1 \leq i \leq f$ and $1 \leq j \leq n-1$ span the subspace $\langle
v_1' ,\dotsc ,v_{n-1}', v_{n+1}' ,\dotsc ,v_{n'}'\rangle$.
\end{enumerate}
\end{remark}

\begin{definition}
Assume the setting as described in \cref{VariablesGoingUpLinearAlgebra}. Let $\cM \in G^{\BC}$ be a doubling
element and $\cH := \cM^{\BC'}$, where $\BC'$ is chosen as in
\cref{Step4SL}. If $\cH$ additionally satisfies \ref{C3}, then $\cH$ is a
\Defn{strong doubling element}.
\end{definition}

Note that \ref{C3} can only be verified after computing the base change
matrix $\BC'$ which is also the reason why this condition is introduced
at this point. If \ref{C3} is not satisfied by $\cH$, then we return to
\ref{SL2} and try another random element $\cE \in G^{\BC}$. Note that
$\langle H, H^{\cM} \rangle^{\BC'} = \langle H, H^{\cH} \rangle $. We
give an algorithm to compute a strong doubling element $\cH \in G^{\BC
\BC'}$ satisfying \ref{C1}, \ref{C2} and \ref{C3} in pseudo code as
\algoref{ComputeStrongDoublingElement} using \algoref{ComputeDoublingElement}
of \cref{ssec:GoingUpPhase3}.

\begin{algorithm}[H]
\caption{ComputeStrongDoublingElement}
\label{ComputeStrongDoublingElement}
\small{
{\nlnonumber
\IOKw{
\begin{tabular}{@{}ll}
		\KwIn{}& {\InputT} {\normalfont $\MyIn{\langle X \rangle = G} \leq \SL(d,q)$}\\
        &{\InputT} {\normalfont A base change matrix $\MyIn{\BC} \in \GL(d,q)$ }\\
        &{\InputT} {\normalfont $\SL(n,q) \cong \MyIn{ \langle Y_n \rangle = H} \leq \MyIn{G}^{\MyIn{\BC}}$ stingray embedded and constructively recognised}\\
        &{\InputT} {\normalfont $\MyIn{\MN} \in \N$}\\
        \KwOut{}& \MyFail{fail} OR $\MyOut{(\cH,\BC',\slp,\MN')}$ where \\
        & {\OutputT} {\normalfont $\MyOut{\cH} \in \MyIn{G}^{\MyIn{\BC}\MyOut{\BC'}}$ is a strong doubling element,} \\
        & {\OutputT} {\normalfont $\MyOut{\BC'} \in \GL(d,q)$ is a base change matrix as in \cref{Step4SL},} \\
        & {\OutputT} {\normalfont $\MyOut{\slp}$ is an MSLP from $\MyIn{X \cup Y_n}$ to $\MyOut{\cH}$ and} \\
        & {\OutputT} {\normalfont $\MyOut{\MN'} \in \N$ where $\MyIn{\MN}-\MyOut{\MN'}$ is the number of random selections that were used} \\
\end{tabular}
} \\
}

\nlnonumber
\LinesNumbered
\setcounter{AlgoLine}{0}
\Begin($\FuncSty{ComputeStrongDoublingElement} {(} \MyInT{G,\BC, H, \MN} {)}$)
{
  \While{$\MN > 0$}
  {
   $(\cM,\slp,\MN) \asgn \algoref{ComputeDoublingElement}(G,\BC,H,\MN)$ \tcp*[r]{\scriptsize{\cref{GlobalCounter,OutputCheck}}}
   $\BC' \asgn $ as described in \cref{Step4SL} \;
   $\cH \asgn \cM^{\BC'}$ \tcp*[r]{\scriptsize{\ref{SL4}}}
   \uIf{the $(n'-n)\times(n-1)$-submatrix $(\cH ^{-1})_{i,j}$ for $n+1 \leq i \leq n'$ and $1 \leq j \leq n-1$ has full rank}
   {
       \Return $\MyOutT{(\cH,\BC',\slp,\MN)}$ \;
   }
   }
   \Return $\MyOutTT{\fail}$ \;
}
}
\end{algorithm}

\subsection{Construction of transvections and standard generators} \label{ssec:GoingUpPhase3}
Given a strong doubling element $\cH \in G^{\BC \BC'}$ as described in
\cref{ssec:GoingUpPhase2} the goal of this section is to conclude that
$\langle H, H^{\cH} \rangle \cong \SL(n',q)$ and the construction of
transvections and standard generators for $\langle H, H^{\cH} \rangle$.
Everything done in this section is deterministic. Recall from \cref{VariablesGoingUpLinearAlgebra} the setting of this section and that
$\cH \in G^{\BC \BC'}$ satisfies \ref{C1}, \ref{C2} and \ref{C3}. Moreover,
note that $\langle H, H^{\cH} \rangle$ is stingray embedded in $G^{\BC
\BC'}$ by the construction of $\BC' \in \GL(d,q)$ in
\cref{ssec:GoingUpPhase2}. In this section the last $\steps$ \ref{SL5} to
\ref{SL7} are described in the next remark.

\begin{remark}\label{StepFiveToSevenOfSL}\ 
\begin{itemize}
\item[\SLFiv] Using $\cH$, construct elementary matrices $E_{j,n}(\prel_i)$
for $n < j \leq n'$.
\item[\SLSi] Using $\cH$, construct elementary matrices $E_{n,j}(\prel_i)$ for
$n < j \leq n'$.
\item[\SLSev] Using the elementary matrices of $\steps$ \ref{SL5} and \ref{SL6}
construct standard generators for $\langle H, H^{\cH} \rangle \cong \SL(n',q)$ by assembling permutation matrices corresponding to $n'$- and
$(n' - 1)$-cycles as in \cref{StandardGeneratorsSL}.
\end{itemize}
\end{remark}

In \ref{SL5} and \ref{SL6}, the elementary matrices $E_{j,n}(\prel_i)$ and
$E_{n,j}(\prel_i)$ are conjugated by $\cM$ and transformed by matrix
multiplications into transvections of $\SL(n',q) \cong \langle H,
H^{\cH} \rangle$ as stingray embedded elements of $G^{\BC \BC'}$. This
is necessary to subsequently compute permutation matrices of $\SL(n',q)$
in \ref{SL7}. For \ref{SL6} we need the transvections computed in \ref{SL5}.
Recall from \cref{transvection} the notation $T_{v,w}$ for
transvections.

The purpose of \ref{SL5} is to construct the elements $E_{j,n}(\prel_i)$
for $1 \leq i \leq f$ and $n+1 \leq j \leq n'$ as elements of $G^{\BC
\BC'}$. For this consider the transvections $T_{\prel_i
v_j',v_n'}^{\cH}$ for $1\leq i \leq f$ and $1 \leq j \leq n-1$.
One readily verifies that the
conjugate $T_{\prel_i v_j' , v_n' }^{\cH}$ is equal to
$T_{\prel_i v_j' \cH^{-\tr} , v_n' }$, since $v_n' \cH = v_n'$. Note
that the transvections $T_{\prel_i v_j' ,v_n' }^{\cH}$ fix $v_n'$ ,
since $\cH$ and the transvections $T_{\prel_i v_j' ,v_n' }$ fix $v_n'$.
Together this implies that $\langle v_n' \mid v_j' \cH^{-\tr} \rangle = 0$
holds for $1 \leq j \leq n - 1$.
Thus, all vectors in
\[ A := \{\prel_i v_j'
\mid 1 \leq i \leq f, 1 \leq j \leq n-1 \} \cup \{\prel_i v_j'
\cH^{-\tr} \mid 1 \leq i \leq f, 1 \leq j \leq n-1 \} \]
are orthogonal to $v_n'$ with respect to the standard scalar product, and we
know an MSLP for $T_{w,v_n' }$ for all vectors $w \in A$. By \ref{C3} we
know that the vectors in $A$ span the subspace $\langle v_1' ,\dotsc
,v_{n-1}', v_{n+1}' ,\dotsc ,v_{n'}'\rangle$. 
We can thus express any $\prel_i v_j'$ with $1 \leq i \leq f$ and $n+1 \leq j \leq n'$
as an $\F_p$-linear combination $\sum_{a\in A}\mu_a a$,
and from this get $T_{\prel_i v_j',v_n'} = \prod_{a\in A} T_{a,v_n'}^{\mu_a}$.
Hence we can now
construct the elements $E_{j,n}(\prel_i)$ as elements of $G^{\BC\BC'}$.

\begin{remark}\ 
\begin{enumerate}
\item Note that we have to perform linear algebra computations over
the prime field $\F_p$ here, but once again we use the results of the
computations only to write an MSLP for these transvections.
\item In an implementation one would check \ref{C3} already in
\algoref{ComputeStrongDoublingElement} to avoid unnecessary computations at
this point.
\end{enumerate}
\end{remark}

The idea of \ref{SL5} is compute $E_{j,n}(\prel_i)^{\cH}$ for $1 \leq i
\leq f$ and $1 \leq j \leq n-1$ and somehow to construct
$E_{j,n}(\prel_i)$ for $1 \leq i \leq f$ and $n+1 \leq j \leq n'$ using
the elements $\{E_{j,n}(\prel_i)^{\cH} \mid 1 \leq i \leq f, 1 \leq j
\leq n-1 \}$. To obtain a usable algorithm we start by computing
$T_{\prel_i v_j',v_n'}^{\cH}$ as elements of $G^{\BC \BC'}$, i.e.\
$E_{j,n}(\prel_i)^{\cH}$, which is given in \cref{SLStep5Representation}.

\begin{lemma}\label{SLStep5Representation}
Let $n'= \min\{2n - 1, d \}$, and let $\cH \in G^{\BC \BC'}$ be as
constructed in the previous $\steps$. Let
\[ a := \begin{pmatrix}
   1 & 0 & \hdots & 0 & \prel_i (\cH^{-1})_{1,j}&0 &  \hdots & 0\\
   0 & 1 & \hdots & 0 &\prel_i (\cH^{-1})_{2,j}  &0 &  \hdots & 0\\
   \vdots & \vdots & \ddots & \vdots & \vdots & \vdots & \ddots & \vdots \\
   0 & 0 & \hdots &1 & \prel_i (\cH^{-1})_{n-1,j} &0 &  \hdots & 0\\
   0 & 0 & \hdots & 0 & 1 & 0 & \hdots & 0\\
   0 & 0 & \hdots & 0 & \prel_i (\cH^{-1})_{n+1,j}  & 1 & \hdots & 0\\
   \vdots & \vdots & \ddots & \vdots & \vdots & \vdots & \ddots & \vdots \\
   0 & 0 & \hdots &0 & \prel_i (\cH^{-1})_{n',j}  &0 &  \hdots & 1\\
 \end{pmatrix} \in \SL(n',q).
\]
Then $E_{j,n}(\prel_i)^{\cH} = \diag(a,I_{d-n'}) \in G^{\BC \BC'}$.
\end{lemma}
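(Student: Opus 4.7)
The plan is to interpret $E_{j,n}(\prel_i)$ as the transvection $T_{\prel_i v_j', v_n'}$ (using the identification recorded right after \cref{transvection}), conjugate it using a standard transvection identity, and then read off the matrix entries.

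The workhorse will be the conjugation formula
\[
T_{v,w}^{\cH} \;=\; T_{v\cH^{-\tr},\,w\cH},
\]
which follows by applying the right action of $\cH^{-1} T_{v,w} \cH$ to a row vector $x$ and using $\langle x\cH^{-1}\mid v\rangle = \langle x\mid v\cH^{-\tr}\rangle$, an immediate consequence of the standard scalar product being the ordinary dot product. Applied to the present situation this yields $E_{j,n}(\prel_i)^{\cH} = T_{\prel_i v_j' \cH^{-\tr},\,v_n'\cH}$. I would then invoke that $\cM$ is a doubling element and hence fixes $v_n=v_n'$, together with the fact that the base change $\BC'$ from \cref{Step4SL} also fixes $v_n$, to conclude $v_n'\cH = v_n'$; the formula collapses to $T_{\prel_i v_j' \cH^{-\tr},\,v_n'}$.

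Unpacking this entry-wise, the $k$-th coordinate of $v_j'\cH^{-\tr}$ is $(\cH^{-1})_{k,j}$, so the transvection acts on basis vectors by $v_k'\mapsto v_k' + \prel_i(\cH^{-1})_{k,j}\,v_n'$. Writing this as a matrix with row action gives the identity with column $n$ replaced by the vector whose $k$-th entry is $\prel_i(\cH^{-1})_{k,j}$ for $k\ne n$ and $1$ in position $(n,n)$, which is precisely the leading $n'\times n'$ block $a$ displayed in the lemma.

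The main point requiring care is verifying that the full matrix is genuinely of the block form $\diag(a,I_{d-n'})$, i.e.\ that no extra entries appear outside the leading $n'\times n'$ block. For this I would exploit the construction of $\BC'$ in \cref{Step4SL}: the vectors $v_{n'+1}',\dots,v_d'$ were chosen from $F_{d-n}\cap \Fix(\cM)$, so $\cH=\cM^{\BC'}$ fixes each of them pointwise and preserves $\langle v_1',\dots,v_{n'}'\rangle$. Therefore $\cH$, and hence $\cH^{-1}$, is itself of the block form $\diag(B,I_{d-n'})$ for some $B\in\GL(n',q)$, which forces $(\cH^{-1})_{k,j}=0$ whenever $k>n'$ and $j\le n-1$. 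The transvection thus acts trivially on $v_{n'+1}',\dots,v_d'$, yielding the claimed global block form. Once this block-structure observation is in hand, the remaining verification is a direct bookkeeping check against the displayed matrix $a$.
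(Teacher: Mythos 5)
Your proof is correct and takes essentially the same route as the paper: the paper verifies the claim by computing $e_k \cH^{-1} E_{j,n}(\prel_i) \cH$ row by row, which is exactly the content of your transvection identity $T_{v,w}^{\cH} = T_{v\cH^{-\tr},\,w\cH}$ specialised via $v_n'\cH = v_n'$ and the block form of $\cH$ (and the paper itself states this identity in the discussion preceding the lemma). The one detail you leave implicit is why the $(n,n)$-entry remains $1$ rather than $1+\prel_i(\cH^{-1})_{n,j}$; this holds because $e_n\cH^{-1}=e_n$ and $j\le n-1$ force $(\cH^{-1})_{n,j}=0$, which the paper handles by treating the row $k=n$ separately.
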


\begin{proof}
If $k \in \{n'+1,\dotsc,d\}$, then
\[ e_k E_{j,n}(\prel_i)^{\cH}
= e_k \cH^{-1} E_{j,n}(\prel_i) \cH
= e_k E_{j,n}(\prel_i) \cH
= e_k \cH = e_k. \]
Moreover for $k \in \{1,\dotsc,n' \} - \{ n \}$
\begin{align*}
e_k \cH^{-1} E_{j,n}(\prel_i) \cH &= (\cH^{-1})_{k,-} E_{j,n}(\prel_i) \cH \\
&= ((\cH^{-1})_{k,-} + \prel_i (\cH^{-1})_{k,j} e_n) \cH \\
&= e_k \cH^{-1} \cH + \prel_i (\cH^{-1})_{k,j} e_n \cH
 = e_k + \prel_i (\cH^{-1})_{k,j} e_n
\end{align*}
and
\[ e_n \cH^{-1} E_{j,n}(\prel_i) \cH = e_n E_{j,n}(\prel_i) \cH = e_n \cH = e_n.
\qedhere \]
\end{proof}

\begin{remark}\label{Step5FurtherStepsSL}
Using the standard generators of $H \cong \SL(n,q)$, the matrix
$E_{j,n}(\prel_i)^{\cH}$ can be multiplied by transvections of $H$
resulting in row and column operations. Using row and column operations
another element of $\langle H, H^{\cH} \rangle$ can be constructed where
the entries in the $n$-th column of $E_{j,n}(\prel_i)^{\cH}$ above the
$(n,n)$ entry are zero such that matrices of the form
$\diag(a_{j,n,\prel_i}, I_{d-n'})$ are constructed where
\[
a_{j,n,\prel_i} := \begin{pmatrix}
   1 & 0 & \hdots & 0 & 0&0 &  \hdots & 0\\
   0 & 1 & \hdots & 0 &0 &0 &  \hdots & 0\\
   \vdots & \vdots & \ddots & \vdots & \vdots & \vdots & \ddots & \vdots \\
   0 & 0 & \hdots &1 & 0 &0 &  \hdots & 0\\
   0 & 0 & \hdots & 0 & 1 & 0 & \hdots & 0\\
   0 & 0 & \hdots & 0 & \prel_i (\cH^{-1})_{n+1,j}  & 1 & \hdots & 0\\
   \vdots & \vdots & \ddots & \vdots & \vdots & \vdots & \ddots & \vdots \\
   0 & 0 & \hdots &0 & \prel_i (\cH^{-1})_{n',j}  &0 &  \hdots & 1\\
 \end{pmatrix}.
\]
In the following we denote the process of using row and column
operations to construct another matrix with zeros at specific positions
as ``eliminating'' these entries. Using \ref{C3} the column vectors
$((\cH^{-1})_{n+1,i},\dotsc,(\cH^{-1})_{n',i})$ for $1 \leq i \leq n-1$ of
length $(n-1)$ below the $n$-row entry of $\cH^{-1}$ generate the full
$f(n-1)$ dimensional $\F_p$-vector space. Observe that the group $G_{n'}
:= \langle \{ a_{j,n,\prel_i} \mid 1 \leq j \leq n-1, 1 \leq i \leq f \}
\rangle$ is abelian and in fact isomorphic to $\F_q^{n-1}$ via the
isomorphism given by
\[
\varphi \colon \F_q^{n-1} \to G_{n'},\ (\lambda_{n+1}, \dotsc, \lambda_{n'}) \mapsto
T_{\sum_{n+1}^{n'} \lambda_i v_{i}, v_n} =
\begin{pmatrix}
   1 & 0 & \hdots & 0 & 0&0 &  \hdots & 0\\
   0 & 1 & \hdots & 0 &0 &0 &  \hdots & 0\\
   \vdots & \vdots & \ddots & \vdots & \vdots & \vdots & \ddots & \vdots \\
   0 & 0 & \hdots &1 & 0 &0 &  \hdots & 0\\
   0 & 0 & \hdots & 0 & 1 & 0 & \hdots & 0\\
   0 & 0 & \hdots & 0 & \lambda_{n+1} & 1 & \hdots & 0\\
   \vdots & \vdots & \ddots & \vdots & \vdots & \vdots & \ddots & \vdots \\
   0 & 0 & \hdots &0 & \lambda_{n'}  &0 &  \hdots & 1\\
 \end{pmatrix}.
\]
Note that our goal is the computation of the
transvections $E_{j,n}(\prel_i)$ for $1 \leq i \leq f$ and $n+1 \leq j
\leq n'$. 
By multiplying the matrices $a_{j,n,\prel_i}$ appropriately, the
standard basis of $\F_q^{n-1}$ as an $\F_p$-vector space, i.e.\ $\prel_1
e_1,\dotsc, \prel_f e_1, \dotsc,\prel_1 e_{n-1},\dotsc, \prel_f
e_{n-1}$, can be computed and thus the transvections $E_{j,n}(\prel_i)$
for $1 \leq i \leq f$ and $n+1 \leq j \leq n'$.
\end{remark}

We combine \cref{SLStep5Representation} and
\cref{Step5FurtherStepsSL} into \algoref{ComputeVerticalTransvections}.

\begin{algorithm}[H]
\caption{ComputeVerticalTransvections}
\label{ComputeVerticalTransvections}
\small{
{\nlnonumber
\IOKw{
\begin{tabular}{@{}ll}
        \KwIn{}& {\InputT} {\normalfont $\SL(n,q) \cong \MyIn{ \langle Y_n \rangle = H} \leq \SL(d,q)$ stingray embedded and constructively recognised}\\
        &{\InputT} {\normalfont $\MyIn{\cH} \in \SL(d,q)$ a strong doubling element}\\
        \KwOut{}& {\OutputT} {\normalfont $\MyOut{T_V} := \{ E_{j,n}(\prel_i) \mid 1 \leq i \leq f, n+1 \leq j \leq n' \} \subset \langle H, H^{\cH} \rangle$ transvections of $\SL(n',q)$} \\
        & {\OutputT} {\normalfont An MSLP $\MyOut{\slp}$ from $\MyIn{Y_n \cup \{ \cH \}}$ to $\MyOut{T_V}$} \\
\end{tabular}
} \\
}

\nlnonumber
\LinesNumbered
\setcounter{AlgoLine}{0}
\Begin($\FuncSty{ComputeVerticalTransvections} {(} \MyInT{H,\cH} {)}$)
{
   \tcp{\scriptsize{Function implements \ref{SL5} based on \cref{SLStep5Representation} and \cref{Step5FurtherStepsSL}}}
   $\tilde{T_V} \asgn [\,]$ \;
   \For{$\lambda \in \{\prel_1,\dotsc, \prel_f \}$ and $j \in \{2, \dotsc, n-1 \}$}
   {
       $T \asgn E_{j,n}(\lambda)^{\cH}$ \;
       \For{$k \in [1, \dotsc, n-1]$}
       {
        $T \asgn E_{k,n}(- \lambda (\cH^{-1})_{k,j}) T$ \tcp*[r]{\scriptsize{Note that $E_{k,n}(- \lambda (\cH^{-1})_{k,j}) \in H$}}
        }
        $\FuncSty{Add}(\tilde{T_V},T)$
   }
   $T_V \asgn [\,]$ \;
   \For{$\lambda \in \{\prel_1,\dotsc, \prel_f \}$ and $j \in \{n+1, \dotsc, n' \}$}
   {
   	$E_{j,n}(\lambda) \asgn $ Multiply the matrices of $\tilde{T_V}$ suitable \;
   	$\FuncSty{Add}(T_V,E_{j,n}(\lambda))$
   }
   $\slp \asgn$ MSLP for the computations of $T_V$ \;
   \Return $\MyOutT{(T_V,\slp)}$ \;
}
}
\end{algorithm}

In \ref{SL6}, the transvections $E_{n,j}(\prel_i)$ for $1 \leq i \leq f$
and $n+1 \leq j \leq n'$ as elements of $G^{\BC \BC'}$ should be
computed. This is more complicated than computing the transvections
$E_{j,n}(\prel_i)$ for $1 \leq i \leq f$ and $n+1 \leq j \leq n'$ in
$\step$ \ref{SL5} because $\cH^{-\tr}$ does not necessarily fix the
$n$-th basis vector anymore. The next lemma therefore carries out a
computation similar to the one described in \cref{SLStep5Representation} and has the desired consequences. Recall
that for a matrix $a \in \GL(d,q)$ the $i$-th row of $a$ is denoted by
$a_{i,-}$ and the $i$-th column of $a$ is denoted by $a_{-,i}$.

\begin{lemma}\label{SLStep6Representation}
Let $n'= \min\{2n - 1, d \}$ and let $\cH \in G^{\BC \BC'}$ be a strong
doubling element. Let $a \in \SL(n',q)$ with
\begin{gather*}
a := I_{n'} + \diag(\prel_i (\cH^{-1})_{1,n},\dotsc,\prel_i
(\cH^{-1})_{n-1,n},\prel_i,\prel_i (\cH^{-1})_{n+1,n},\dotsc,\prel_i
(\cH^{-1})_{n',n}) \\ \cdot (\cH_{j,-},\dotsc,\cH_{j,-})^{\tr}.
\end{gather*}
Then $E_{n,j}(\prel_i)^{\cH} = \diag(a,I_{d-n'}) \in G^{\BC \BC'}$.
\end{lemma}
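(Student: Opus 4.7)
The plan is to reduce the statement to a direct matrix computation using the block structure of $\cH$, exactly as in the proof of \cref{SLStep5Representation}, but now tracking one additional contribution because $\cH^{-\tr}$ no longer fixes $v_n'$.

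First, I would rewrite the conjugation additively. Since $E_{n,j}(\prel_i) = I_d + \prel_i \cdot \mathbf{e}_{n,j}$, where $\mathbf{e}_{n,j}$ denotes the matrix unit with a single $1$ in position $(n,j)$, we have
\[
E_{n,j}(\prel_i)^{\cH} = \cH^{-1}(I_d + \prel_i \mathbf{e}_{n,j})\cH = I_d + \prel_i\, \cH^{-1} \mathbf{e}_{n,j}\, \cH.
\]
A direct entry-wise computation gives $(\cH^{-1}\mathbf{e}_{n,j}\cH)_{k,l} = (\cH^{-1})_{k,n}\,\cH_{j,l}$, so the correction is the outer product of the $n$-th column of $\cH^{-1}$ with the $j$-th row of $\cH$, scaled by $\prel_i$. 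This already matches the claimed formula for $a$, once the notation $\diag(\prel_i(\cH^{-1})_{k,n}) \cdot (\cH_{j,-},\dots,\cH_{j,-})^{\tr}$ is read as ``scale row $k$ of the rank-one matrix whose rows are all $\cH_{j,-}$ by the corresponding diagonal entry''.

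Second, I would verify that this rank-one correction is supported in the top-left $n'\times n'$ block, so that the conjugate decomposes as $\diag(a, I_{d-n'})$. For this I use the setup of \cref{ssec:GoingUpPhase2}: the basis $\mathcal{B}'$ was constructed so that the first $n$ vectors span $V_n$, vectors $n+1,\dots,n'$ extend to a basis of $V_n+V_n\cM$, and (in the case $n'<d$) vectors $n'+1,\dots,d$ lie in $F_{d-n}\cap\Fix(\cM)$. Combined with \cref{SomeResultsOnVnc}~(3), which states that $V_{n'}$ is $\cM$-invariant, this shows that $\cM$ (and hence $\cH=\cM^{\BC'}$) preserves $\langle e_1,\dots,e_{n'}\rangle$ and fixes $e_{n'+1},\dots,e_d$ pointwise; equivalently $\cH=\diag(\tilde{\cH},I_{d-n'})$ in the new basis, and the same holds for $\cH^{-1}$. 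Consequently $(\cH^{-1})_{k,n}=0$ for $k>n'$ and $\cH_{j,l}=0$ for $l>n'$ (since $j\le n'$), so the outer product vanishes outside the top-left $n'\times n'$ block, and the bottom-right $(d-n')\times(d-n')$ block of $E_{n,j}(\prel_i)^{\cH}$ remains $I_{d-n'}$.

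The only mild subtlety, which I expect to be the main point to check carefully rather than a real obstacle, is the difference with \ref{SL5}: here the correction is not a simple transvection of the form $T_{w, v_n'}$, because the $j$-th row of $\cH$ generally has a nonzero entry in column $n$, so the rank-one matrix picks up a contribution along $e_n$. This is what forces the middle diagonal entry in the $\diag(\dots)$ to be $\prel_i$ rather than $0$; I would make this bookkeeping explicit when reading off the formula. Once this is noted, the identity $E_{n,j}(\prel_i)^{\cH}=\diag(a,I_{d-n'})$ follows by comparing entries, and $a\in\SL(n',q)$ is automatic since $E_{n,j}(\prel_i)^{\cH}\in G^{\BC\BC'}\le\SL(d,q)$ and the non-identity block is $a$.
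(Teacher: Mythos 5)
Your proposal is correct and is essentially the paper's argument: the paper performs the same direct computation row by row (showing $e_k\cH^{-1}E_{n,j}(\prel_i)\cH = e_k + \prel_i(\cH^{-1})_{k,n}\cH_{j,-}$ for $k\le n'$, $k\ne n$, with the $n$-th row picking up $\prel_i\cH_{j,-}$ and rows $k>n'$ fixed), which is exactly your ``identity plus rank-one outer product'' $I_d+\prel_i(\cH^{-1})_{-,n}\cH_{j,-}$ written out componentwise. Your explicit justification that the correction is supported in the top-left $n'\times n'$ block (via the block structure of $\cH$ in the basis $\MB'$) is a point the paper leaves implicit, but it is the same underlying fact.
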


\begin{proof}
If $k \in \{n'+1,\dotsc,d\}$, then
\[ e_k E_{n,j}(\prel_i)^{\cH} = e_k \cH^{-1} E_{n,j}(\prel_i) \cH = e_k
E_{n,j}(\prel_i) \cH = e_k \cH = e_k. \]
For $k \in \{1,\dotsc,n' \} - \{ n \}$
\begin{align*}
e_k \cH^{-1} E_{n,j}(\prel_i) \cH &= (\cH^{-1})_{k,-} E_{n,j}(\prel_i) \cH \\
&= ((\cH^{-1})_{k,-} + \prel_i (\cH^{-1})_{k,n} e_j) \cH \\
&= e_k \cH^{-1} \cH + \prel_i (\cH^{-1})_{k,n} e_j \cH
 = e_k + \prel_i (\cH^{-1})_{k,n} \cH_{j,-}
\end{align*}
and
\[ e_n \cH^{-1} E_{n,j}(\prel_i) \cH
= e_n E_{n,j}(\prel_i) \cH
= (\prel_i e_j + e_n) \cH
= \prel_i e_j \cH + e_n \cH
= \prel_i \cH_{j,-} + e_n.
\qedhere
\]
\end{proof}

After conjugating $E_{n,j}(\prel_i)$ by $\cH$ the result becomes
slightly more complicated as the conjugation of $E_{j,n}(\prel_i)$ by
$\cH$. \Cref{Step6Graphical} displays how the transvections $E_{n,j}(1)$
for $n+1 \leq j \leq n'$ can still be computed starting from
$E_{n,j}(\prel_i)^{\cH}$.

\begin{remark}\label{Step6Graphical}
Let $1 \leq j \leq n-1$, let $n'= \min\{2n - 1, d \}$ and let $\cH$ and
$\BC'$ be as constructed in the previous $\steps$. Moreover, we assume
that \ref{SL5} has already been performed, i.e.\ that MSLPs for the
transvections $E_{j,n}(\prel_i)$ for $1 \leq i \leq f$ and $n+1 \leq j
\leq n'$ have been computed.

As in \ref{SL5} the $n' \times n'$ top left block of $E_{n,j}(1)^{\cH}$
for $1 \leq j \leq n-1$ should be transformed into the transvection
$E_{n,n+j}(1)$. To illustrate the operations we are performing we
represent the $n' \times n'$ top left block of $E_{n,j}(1)^{\cH}$ as in
(\ref{Step6Graphical1}).
\begin{equation}
\label{Step6Graphical1}
\scriptsize\arraycolsep=2pt
\left(
  \begin{array}{ccc|c|ccc}
  \cR{\textbf{*}} & \cR{\hdots} & \cR{\textbf{*}} & \cGG{\textbf{*}} & \cO{\textbf{*}} & \cO{\hdots} & \cO{\textbf{*}} \\
  \cR{\vdots} & \cR{\ddots} & \cR{\vdots} & \cGG{\vdots} & \cO{\vdots} & \cO{\ddots} & \cO{\vdots} \\
  \cR{\textbf{*}} & \cR{\hdots} & \cR{\textbf{*}} & \cGG{\textbf{*}} & \cO{\textbf{*}} & \cO{\hdots} & \cO{\textbf{*}} \\
    \hline
  \cB{\textbf{*}} & \cB{\hdots} & \cB{\textbf{*}} & \textbf{*} & \cB{\textbf{*}} & \cB{\hdots} & \cB{\textbf{*}} \\
    \hline
  \cP{\textbf{*}} & \cP{\hdots} & \cP{\textbf{*}} & \cGG{\textbf{*}} & \cYY{\textbf{*}} & \cYY{\hdots} & \cYY{\textbf{*}} \\
  \cP{\vdots} & \cP{\ddots} & \cP{\vdots} & \cGG{\vdots} & \cYY{\vdots} & \cYY{\ddots} & \cYY{\vdots} \\
  \cP{\textbf{*}} & \cP{\hdots} & \cP{\textbf{*}} & \cGG{\textbf{*}} & \cYY{\textbf{*}} & \cYY{\hdots} & \cYY{\textbf{*}}
\end{array} \right).
\end{equation}
We are using $n' -n$ to deal with both cases for $n'$, i.e.\ $n' = 2n -
1 \leq d$ and $n' = d \neq 2n - 1$ simultaneously. If $n' = 2n - 1 \leq
d$, then $n' - n = 2n-1 - n = n-1$ and if $n' = d \neq 2n - 1$, then $n'
- n < n-1$. Remember that we have standard generators and, therefore,
all transvections of $\SL(n,q)$ for the top left $n \times n$ block
(consisting of the red, green, blue and black stars). We start by
eliminating the red and orange blocks by adding the $n$-th row using row
operations. Let $k \in \{1,\dotsc,n-1\}$ and $i$ as in
\cref{SLStep6Representation}. Then
\[
\begin{aligned}
(e_k + \prel_i (\cH^{-1})_{k,n} \cH_{j,-})
    + (-(\cH^{-1})_{k,n}) (\prel_i \cH_{j,-} + e_n)
&= e_k + \prel_i (\cH^{-1})_{k,n} \cH_{j,-}
       - (\cH^{-1})_{k,n} \prel_i \cH_{j,-}
       - (\cH^{-1})_{k,n} e_n \\
&= e_k - (\cH^{-1})_{k,n} e_n
\end{aligned}
\]
where $e_k + \prel_i (\cH^{-1})_{k,n} \cH_{j,-}$ is the $k$-th row of
$E_{n,j}(1)^{\cH}$ and $\prel_i \cH_{j,-} + e_n$ is the $n$-th row of
$E_{n,j}(1)^{\cH}$. Hence, the addition of these rows can be achieved by
multiplying with $E_{k,n}(- (\cH^{-1})_{k,n})$ from the left. Note that
$E_{k,n}(- (\cH^{-1})_{k,n}) \in \SL(n,q)$ and, therefore, we can write
$E_{k,n}(- (\cH^{-1})_{k,n})$ as a word in $Y_n$. After these $n-1$ row
operations, the matrix of (\ref{Step6Graphical1}) is transformed into
the matrix of (\ref{Step6Graphical2}):
\begin{equation}
\label{Step6Graphical2}
\scriptsize\arraycolsep=2pt
\left(
  \begin{array}{ccc|c|ccc}
 & &  & \cGG{\textbf{*}} & &  &  \\
   & I_{n-1} &  & \cGG{\vdots} &  & 0 &  \\
  & &  & \cGG{\textbf{*}} & &  &  \\
  \hline
  \cB{\textbf{*}} & \cB{\hdots} & \cB{\textbf{*}} & \textbf{*} & \cB{\textbf{*}} & \cB{\hdots} & \cB{\textbf{*}} \\
   \hline
  \cP{\textbf{*}} & \cP{\hdots} & \cP{\textbf{*}} & \cGG{\textbf{*}} & \cYY{\textbf{*}} & \cYY{\hdots} & \cYY{\textbf{*}} \\
  \cP{\vdots} & \cP{\ddots} & \cP{\vdots} & \cGG{\vdots} & \cYY{\vdots} & \cYY{\ddots} & \cYY{\vdots} \\
  \cP{\textbf{*}} & \cP{\hdots} & \cP{\textbf{*}} & \cGG{\textbf{*}} & \cYY{\textbf{*}} & \cYY{\hdots} & \cYY{\textbf{*}}
\end{array} \right).
\end{equation}
We proceed analogously with the rows below the $n$-th row using the same
argument for $k \in \{n+1,\dotsc,n'\}$. Notice that this is only
possible since we have already computed the transvections
$E_{j,n}(\prel_i)$ for $1 \leq i \leq f$ and $n+1 \leq j \leq n'$ in
\ref{SL5}. After these additional $n-1$ row operations we transform
(\ref{Step6Graphical2}) into (\ref{Step6Graphical3}).
\begin{equation}
\label{Step6Graphical3}
\scriptsize\arraycolsep=2pt
\left(
  \begin{array}{ccc|c|ccc}
 & &  & \cGG{\textbf{*}} & &  &  \\
   & I_{n-1} &  & \cGG{\vdots} &  & 0 &  \\
  & &  & \cGG{\textbf{*}} & &  &  \\
  \hline
  \cB{\textbf{*}} & \cB{\hdots} & \cB{\textbf{*}} & \textbf{*} & \cB{\textbf{*}} & \cB{\hdots} & \cB{\textbf{*}} \\
   \hline
 & & & \cGG{\textbf{*}} & &  & \\
   & 0& \ & \cGG{\vdots} &  & I_{n-1} & \\
   & &  & \cGG{\textbf{*}} &  &  &
\end{array} \right).
\end{equation}
Notice that the green entries are known, i.e.\ the entry at position
$(k,n)$ is $- (\cH^{-1})_{k,n}$, since $(e_k + \prel_i (\cH^{-1})_{k,n}
\cH_{j,-}) + (- (\cH^{-1})_{k,n}) (\prel_i \cH_{j,-} + e_n) = e_k -
(\cH^{-1})_{k,n} e_n$. By adding the columns 1 to $n-1$ multiplying by
the corresponding scalars to the $n$-th column, the green entries in the
$n$-th column above the $n$-th entry can be eliminated. This can be
performed by multiplying the matrices $E_{k,n}((\cH^{-1})_{k,n}) \in H$
from the right. The entry in position $(n,n)$ is changed to $1$ during
these operations if this was not the case which is shown at the end of
this remark. After these column operations (\ref{Step6Graphical3}) is
transformed into (\ref{Step6Graphical4}).
\begin{equation}
\label{Step6Graphical4}
\scriptsize\arraycolsep=2pt
\left(
  \begin{array}{ccc|c|ccc}
 & &  & 0& &  &  \\
   & I_{n-1} &  & \vdots &  & 0 &  \\
  & &  & 0 & &  &  \\
  \hline
  \cB{\textbf{*}} & \cB{\hdots} & \cB{\textbf{*}} & \textbf{*} & \cB{\textbf{*}} & \cB{\hdots} & \cB{\textbf{*}} \\
   \hline
 & & & \cGG{\textbf{*}} & &  & \\
   & 0& \ & \cGG{\vdots} &  & I_{n-1} & \\
   & &  & \cGG{\textbf{*}} &  &  &
\end{array} \right).
\end{equation}
The rest is now trivial. The rows 1 to $n-1$ are added to the $n$-th row
in order to eliminate the entries to the left of the $n$-th entry in the
$n$-th row. Moreover, the columns $n+1$ to $n'$ are added to the $n$-th
column to eliminate the entries below the $n$-th entry, resulting in the
final matrix given in (\ref{Step6Graphical5}).
\begin{equation}
\label{Step6Graphical5}
\scriptsize\arraycolsep=2pt
\left(
  \begin{array}{ccc|c|ccc}
 & &  & 0& &  &  \\
   & I_{n-1} &  & \vdots &  & 0 &  \\
  & &  & 0 & &  &  \\
  \hline
  0 & \hdots & 0 & 1 & \cB{\textbf{*}} & \cB{\hdots} & \cB{\textbf{*}} \\
   \hline
 & & & 0 & &  & \\
   & 0& \ & \vdots &  & I_{n-1} & \\
   & &  & 0 &  &  &
\end{array} \right).
\end{equation}
The entry in Position $(n,n)$ of (\ref{Step6Graphical5}) must be 1 since
this is an upper triangular matrix which is also contained in $\SL$.
Note that the entries in position $(n,n)$ in this matrix and matrix in
(\ref{Step6Graphical4}) are the same which is why it already was $1$
before these final row and column operations. Moreover, the entries in
blue are equal to $(\cH_{j,-})_{n+1,\dotsc,n'}$ which are equal to
$e_1,\dotsc,e_{n-1}$ by choosing the basis $\MB'$ of $\step$ \ref{SL4}
advisedly. Overall, $E_{n,j}(1)^{\cH}$ is transformed into
$E_{n,n+j}(1)$ using only transvections for which an MSLP is known.
Therefore, an MSLP evaluating to $E_{n,n+j}(1)$ is constructed.
\end{remark}

We also give a pseudo code performing \ref{SL6} as \algoref{ComputeHorizontalTransvections} in 
\cref{ComputeHorizontalTransvections}.

\begin{algorithm}[H]
\caption{ComputeHorizontalTransvections}
\label{ComputeHorizontalTransvections}
\small{
{\nlnonumber
\IOKw{
\begin{tabular}{@{}ll}
        \KwIn{}& {\InputT} {\normalfont $\SL(n,q) \cong \MyIn{ \langle Y_n \rangle = H} \leq \SL(d,q)$ stingray embedded and constructively recognised}\\
        &{\InputT} {\normalfont $\MyIn{\cH} \in \SL(d,q)$ a strong doubling element}\\
        \KwOut{}& {\OutputT} {\normalfont $\MyOut{T_H} := \{ E_{n,j}(1) \mid n+1 \leq j \leq n' \} \subset \langle H, H^{\cH} \rangle$ transvections of $\SL(n',q)$} \\
        & {\OutputT} {\normalfont An MSLP $\MyOut{\slp}$ from $\MyIn{Y_n \cup \{ \cH \}} \cup T_V$ to $\MyOut{T_H}$} \\
\end{tabular}
} \\
}

\nlnonumber
\LinesNumbered
\setcounter{AlgoLine}{0}
\Begin($\FuncSty{ComputeHorizontalTransvections} {(} \MyInT{H,\cH} {)}$)
{
   \tcp{\scriptsize{Function implements \ref{SL6} based on \cref{SLStep6Representation} and \cref{Step6Graphical}}}
   $T_H \asgn [\,]$ \;
   \For{$j \in \{2, \dotsc, n-1 \}$}
   {
       $T \asgn E_{n,j}(1)^{\cH}$ \;
       \For{$k \in [1, \dotsc, n-1]$}
       {
        $T \asgn E_{k,n}(- (\cH^{-1})_{k,n}) T $ \tcp*[r]{\scriptsize{As in (\ref{Step6Graphical2}) of \cref{Step6Graphical}}}
        }
        \For{$k \in [n+1, \dotsc, n']$}
       {
        $T \asgn E_{k,n}(- (\cH^{-1})_{k,n})  T $ \tcp*[r]{\scriptsize{As in (\ref{Step6Graphical3}) of \cref{Step6Graphical}}}
        }
        \For{$k \in [1, \dotsc, n-1]$}
       {
        $T \asgn T E_{k,n}((\cH^{-1})_{k,n}) $ \tcp*[r]{\scriptsize{As in (\ref{Step6Graphical4}) of \cref{Step6Graphical}}}
        }
        \For{$k \in [1, \dotsc, n-1]$}
       {
        $T \asgn E_{n,k}(- (\cH^{-1})_{j,k}) T $ \tcp*[r]{\scriptsize{As in (\ref{Step6Graphical5}) of \cref{Step6Graphical}}}
        }
         \For{$k \in [n+1, \dotsc, n']$}
       {
        $T \asgn T E_{k,n}((\cH^{-1})_{k,n})$ \tcp*[r]{\scriptsize{As in (\ref{Step6Graphical5}) of \cref{Step6Graphical}}}
        }
        $\FuncSty{Add}(T_H,E)$
   }
   $\slp \asgn$ MSLP for the computations of $T_H$ \;
   \Return $\MyOutT{(T_H,\slp)}$ \;
}
}
\end{algorithm}

In the last and final $\step$, the $n'$ and $n'-1$ cycles are now
assembled. This can easily be realized with the transvections from
\ref{SL5} and \ref{SL6} as described in \cref{Step7SL}.

\begin{lemma}\label{Step7SL}
The permutation matrices $\PME',\PMZ'$ as in \cref{StandardGeneratorsSL}
for $\SL(n',q)$ can be computed using the matrices of the set $X =
\{\PME,\PMZ,E_{i,n}(1),E_{n,i}(1) \}$ for $n+1 \leq i \leq n'$.
\end{lemma}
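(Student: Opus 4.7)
The plan is to exhibit explicit short words in the elements of $X$ that evaluate to $\PME'$ and $\PMZ'$. The first step is to identify the missing ingredient. Writing $\sigma,\sigma'$ for the underlying permutations of $\PME,\PME'$ and $\tau,\tau'$ for those of $\PMZ,\PMZ'$, a direct index-by-index calculation on the cycles in \cref{StandardGeneratorsSL} shows
\[
\sigma'\sigma^{-1} \;=\; \tau'\tau^{-1} \;=\; (n,\ n',\ n'-1,\ \dotsc,\ n+1),
\]
a single cycle supported on $\{n,n+1,\dotsc,n'\}$ and fixing $1,\dotsc,n-1$. Hence it suffices to construct, as a word in the given transvections, a monomial matrix $W\in\SL(d,q)$ realising this cycle on the new coordinates; the elements $W\cdot \PME$ and $W\cdot \PMZ$ will then agree with $\PME'$ and $\PMZ'$ at the level of underlying permutations, leaving only a sign correction.

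For the construction of $W$ I would use the classical $\SL(2,q)$ identity producing a Weyl-type element from three transvections. For each $1\le k\le n'-n$ set
\[
w_k \;\coloneqq\; E_{n,n+k}(1)\,E_{n+k,n}(-1)\,E_{n,n+k}(1),
\]
which is a word in $X$ because $E_{n+k,n}(-1)=E_{n+k,n}(1)^{-1}$. A $2\times 2$ matrix computation confirms that $w_k$ acts on $\langle e_n,e_{n+k}\rangle$ as $\bigl(\begin{smallmatrix}0&1\\-1&0\end{smallmatrix}\bigr)$ and fixes every other basis vector, so in particular $w_k\in\SL(d,q)$. A short induction on $k$, tracking where each basis vector is sent through the successive swaps $(n,n+k)$, then shows that the ordered product $W\coloneqq w_1 w_2\cdots w_{n'-n}$ realises the cycle $(n,n',n'-1,\dotsc,n+1)$ with a well-determined sign pattern on the affected basis vectors.

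The main obstacle is the sign bookkeeping: \cref{StandardGeneratorsSL} requires a single off-diagonal entry equal to $-1$ at a position dictated by the parity of $n'$, and the naive products $W\cdot \PME$ and $W\cdot\PMZ$ need not match this normalisation exactly. Any discrepancy, however, is a diagonal matrix with entries in $\{\pm 1\}$ and determinant $1$, hence a product of sign flips that can be absorbed either by replacing selected $w_k$ with their re-signed variants $E_{n,n+k}(-1)\,E_{n+k,n}(1)\,E_{n,n+k}(-1)$, or by inserting an additional conjugation by some $w_\ell$; both options remain words in $X$. Collecting the resulting construction into two MSLPs yields $\PME'$ and $\PMZ'$ in the exact form required by \cref{StandardGeneratorsSL}.
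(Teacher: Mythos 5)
Your proposal matches the paper's proof in all essentials: both build the signed transposition $(n,i)$ from the three-transvection word $E_{i,n}(1)^{-1}E_{n,i}(1)E_{i,n}(1)^{-1}$ (your $w_k$ is this same Weyl element), compose these into the cycle $(n,n',n'-1,\dotsc,n+1)$, multiply by $\PME$ resp.\ $\PMZ$ to obtain the $n'$- and $(n'-1)$-cycles, and finish by tracking the positions of the $-1$ entries. The only difference is a composition convention (you factor as $\sigma'\sigma^{-1}$ and multiply $W$ on the left, where the paper, acting on row vectors, uses $\sigma^{-1}\sigma'$ and multiplies on the right), which does not affect the argument; your closing remark on sign bookkeeping is, if anything, slightly more explicit than the paper's.
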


\begin{proof}
Transpositions can be computed easily as
$E_{i,n}^{-1}(1)E_{n,i}(1)E_{i,n}^{-1}(1)$ is the permutation matrix
which corresponds to $(n,i) \in S_{n'}$ for $n < i \leq n'$, where the
entry in position $(i,n)$ is equal to $-1$ and
$E_{i,n}(1)E_{n,i}^{-1}(1)E_{i,n}(1)$ is the permutation matrix which
corresponds to $(n,i) \in S_{n'}$ for $n < i \leq n'$, where the entry
in position $(n,i)$ is equal to $-1$. Moreover
\begin{align*}
&(n,n') \cdot (n,n'-1) \cdot (n,n'-2) \cdot \dotsc \cdot (n,n+1) \\
=\, &(n,n',n'-1) \cdot (n,n'-2) \cdot \dotsc \cdot (n,n+1) \\
=\, &(n,n',n'-1,\dotsc,n+1).
\end{align*}
and
\begin{align*}
(n,n-1,\dotsc,1) \cdot (n,n',n'-1,\dotsc,n+1) &= (n,n-1,\dotsc,1,n',n'-1,\dotsc,n+1) \\
  &= (n',n'-1,\dotsc,1), \\
(n,n-1,\dotsc,2) \cdot (n,n',n'-1,\dotsc,n+1) &= (n,n-1,\dotsc,2,n',n'-1,\dotsc,n+1) \\
  &= (n',n'-1,\dotsc,2).
\end{align*}
Because of the position of $-1$ in the transpositions, the matrices
correspond to the standard generators of \cref{StandardGeneratorsSL}.
\end{proof}

\subsection{\GoingUp\ step} \label{ssec:GoingUpStep}
Finally all $\steps$ and subalgorithms of this section are combined into
one single algorithm which can be used for a single \GoingUp\ step as
stated in \cref{DoublingTheDimension}. The seven $\steps$ of the
previous sections are summarized in \cref{StepsOfGoingUpLA}. Recall that
$\steps$ \ref{SL1} to \ref{SL3} are formulated in \cref{StepOneToThreeOfSL}
of \cref{ssec:GoingUpPhase1}, that $\step$ \ref{SL4} is formulated in
\cref{StepFourOfSL} of \cref{ssec:GoingUpPhase2} and that $\steps$
\ref{SL5} to \ref{SL7} are formulated in \cref{StepFiveToSevenOfSL} of
\cref{ssec:GoingUpPhase3}.

\begin{remark}\label{StepsOfGoingUpLA}
The following seven $\steps$ must be performed for
\cref{DoublingTheDimension} and, therefore, for one \GoingUp\ step:
\begin{enumerate}[label={(SL\arabic*)},leftmargin=*]
\item\label{SL1} Construct an element $\GUE \in H$ which has a fixed space
of dimension $d - n + 1$.
\item\label{SL2} Choose random elements $\cE \in G^{\BC}$ until $\cT :=
\GUE^{\cE}$ is a weak doubling element.
\item\label{SL3} Find a conjugate $\cM \in G^{\BC}$ of $\cT$ which is a doubling element.
\item\label{SL4} Compute a base change matrix $\BC'$ such that $\langle
H,H^{\cM} \rangle^{\BC'}$ is stingray embedded in $G^{\BC \BC'}$. Set
$\cH := \cM^{\BC'}$ and verify whether $\cH$ is a strong doubling
element.
\item\label{SL5} Using $\cH$, construct transvections $E_{j,n}(\prel_i)$
for $n < j \leq n'$.
\item\label{SL6} Using $\cH$, construct transvections $E_{n,j}(\prel_i)$ for
$n < j \leq n'$.
\item\label{SL7} Using the transvections of \ref{SL5} and \ref{SL6} construct
standard generators for $\langle H, H^{\cH} \rangle \cong \SL(n',q)$ by
assembling permutation matrices corresponding to $n'$- and $(n' -
1)$-cycles as in \cref{StandardGeneratorsSL}.
\end{enumerate}
\end{remark}

Note that the condition \ref{C3} is tested by
\algoref{ComputeStrongDoublingElement} in \ref{SL4} and if $\cH$ does not
satisfy \ref{C3}, then we return to \ref{SL2}. An overall algorithm for one
\GoingUp\ step is given in pseudo code as \algoref{GoingUpStep}.

\begin{algorithm}[H]
\caption{GoingUpStep}
\label{GoingUpStep}
\small{
{\nlnonumber
\IOKw{
\begin{tabular}{@{}ll}
        \KwIn{}& {\InputT} {\normalfont $\MyIn{\langle X \rangle = G} \leq \SL(d,q)$}\\
        &{\InputT} {\normalfont A base change matrix $\MyIn{\BC} \in \GL(d,q)$}\\
        &{\InputT} {\normalfont $\SL(n,q) \cong \MyIn{ \langle Y_n \rangle = H} \leq \MyIn{G}^{\MyIn{\BC}}$ stingray embedded and constructively recognised}\\
        &{\InputT} {\normalfont $\MyIn{\MN} \in \N$}\\
        \KwOut{}& \MyFail{fail} OR $\MyOut{(Y_{n'},\BC',\slp,\MN')}$ where \\
        &{\OutputT} {\normalfont $\SL(\min\{2n-1,d\},q) \cong \MyOut{ \langle Y_{n'} \rangle = \tilde{H}}$,} \\
        & {\OutputT} {\normalfont $\MyOut{\BC'}$ is a base change matrix such that $\MyOut{\tilde{H}}$ is stingray embedded in $\MyIn{G}^{\MyIn{\BC}\MyOut{\BC'}}$,} \\
        & {\OutputT} {\normalfont $\MyOut{\slp}$ is an MSLP from $\MyIn{X \cup Y_n}$ to the standard generators $\MyOut{ Y_{n'} }$ of $\MyOut{\tilde{H}}$ and} \\
        &{\OutputT} {\normalfont $\MyOut{\MN'} \in \N$ where $\MyIn{\MN}-\MyOut{\MN'}$ is the number of random selections that were used} \\
\end{tabular}
} \\
}

\nlnonumber
\LinesNumbered
\setcounter{AlgoLine}{0}
\Begin($\FuncSty{GoingUpStep} {(} \MyInT{G,H,\BC, \MN} {)}$)
{
   $(\cH, \BC', \slp_1,\MN) \asgn \algoref{ComputeStrongDoublingElement}(G, H, \BC, \MN)$ \;
   \uIf{$\cH = $ fail}
   {
   	\Return $\MyOutTT{\fail}$ \;
   }
   $T_V, \slp_2 \asgn \algoref{ComputeVerticalTransvections}(H,\cH)$ \;
   $T_H, \slp_3 \asgn \algoref{ComputeHorizontalTransvections}(H, \cH)$ \;
   Use $T_V$ and $T_H$ to construct $\PME$ and $\PMZ$ of $\SL(n',q)$ as an MSLP $\slp_4$ using \cref{Step7SL} \tcp*[r]{\scriptsize{\ref{SL7}}}
   Compose $\slp_1, \slp_2, \slp_3, \slp_4$ into one MSLP $\slp$ \;
   \Return $\MyOutT{(\langle H, H^{\cH} \rangle,\BC',\slp,\MN)}$ \;
}
}
\end{algorithm}

\begin{theorem}\label{GoingUpStepAlgorithm}
\algoref{GoingUpStep} terminates using at most $\MN$ random selections and works correctly.
\end{theorem}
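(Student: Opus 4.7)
The proof reduces to a composition argument, since \algoref{GoingUpStep} chains one randomised subroutine, \algoref{ComputeStrongDoublingElement}, with three deterministic ones: \algoref{ComputeVerticalTransvections}, \algoref{ComputeHorizontalTransvections}, and the permutation assembly from \cref{Step7SL}. For termination and the random-selection bound I would first observe that \algoref{GoingUpStep} contains no loops itself and invokes \algoref{ComputeStrongDoublingElement} exactly once; by the global-counter convention of \cref{GlobalCounter}, all random selections performed by that call, and by its nested calls to \algoref{ComputeDoublingElement}, decrement the shared variable $\MN$, and both subroutines terminate (returning $\InLineCode{fail}$ if necessary) as soon as $\MN$ is exhausted. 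Propagation of $\InLineCode{fail}$ is explicit in lines~2--4 and conforms to \cref{OutputCheck}, so the entire call uses at most $\MN$ random selections.

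For correctness, assume that \algoref{ComputeStrongDoublingElement} succeeds and returns $(\cH,\BC',\slp_1,\MN')$. By the specification of that algorithm, $\cH$ is a strong doubling element in $G^{\BC\BC'}$, i.e.\ satisfies \ref{C1}, \ref{C2} and \ref{C3}, and $\BC'$ is constructed as in \cref{Step4SL}. In particular the first $n$ basis vectors are preserved and $\langle v_{n+1}',\dots,v_d'\rangle \subseteq \Fix(H)$, so $H$ remains stingray embedded in $G^{\BC\BC'}$ and $\langle H,H^{\cH}\rangle$ is stingray embedded of degree $n'=\min\{2n-1,d\}$. Applying \cref{SLStep5Representation} and the reduction described in \cref{Step5FurtherStepsSL}, the call \algoref{ComputeVerticalTransvections} then returns the set $T_V=\{E_{j,n}(\prel_i)\mid 1\le i\le f,\ n+1\le j\le n'\}$ together with an MSLP $\slp_2$ from $Y_n\cup\{\cH\}$ to $T_V$; here the $\F_p$-spanning property required for the final linear combination is precisely \ref{C3}. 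Combining \cref{SLStep6Representation} with the row and column scheme illustrated in \cref{Step6Graphical}, \algoref{ComputeHorizontalTransvections} next returns $T_H=\{E_{n,j}(1)\mid n+1\le j\le n'\}$ with an MSLP $\slp_3$ from $Y_n\cup\{\cH\}\cup T_V$. Finally \cref{Step7SL} assembles the permutation matrices $\PME'$ and $\PMZ'$ for $\SL(n',q)$ from $\PME,\PMZ\in Y_n$ and from $T_V\cup T_H$; together with the $(1,2)$-block transvections $E_{1,2}(\prel_i),E_{2,1}(\prel_i)$ already present in $Y_n$, this yields a complete set $Y_{n'}$ of standard generators of $\langle H,H^{\cH}\rangle\cong \SL(n',q)$ in the sense of \cref{StandardGeneratorsSL}. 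Composing $\slp_1,\slp_2,\slp_3,\slp_4$ produces the required MSLP $\slp$ from $X\cup Y_n$ to $Y_{n'}$, and the base change matrix is $\BC'$.

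The main obstacle in this proof is purely bookkeeping: one must track that the stingray structure, the meaning of the basis $v_1',\dots,v_d'$, and the domain of evaluation of each intermediate MSLP remain consistent across the four subroutines. The mathematical content, namely that \ref{C1} fixes the output degree $n'$, that \ref{C2} supplies the stingray tail, and that \ref{C3} ensures the $\F_p$-spanning needed inside \algoref{ComputeVerticalTransvections}, has already been absorbed into \cref{SomeResultsOnVnc}, \cref{Step4SL}, \cref{SLStep5Representation} and \cref{SLStep6Representation}. At this level the argument therefore reduces to checking that each subroutine's precondition is met by its predecessor's postcondition, which is exactly what the above chain of invocations provides.
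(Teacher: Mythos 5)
Your proposal is correct and follows essentially the same route as the paper: the published proof is a (much terser) composition argument, asserting that correctness follows because each of the phases \ref{SL1}--\ref{SL7} was proved correct in the preceding sections, and that termination within $\MN$ random selections follows because the only randomised line is the call to \algoref{ComputeStrongDoublingElement}, which is controlled by $\MN$. Your version merely spells out the precondition/postcondition chaining that the paper leaves implicit.
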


\begin{proof}
The correctness is clear since the correctness of each $\step$ has been
proven in the preceding sections. It remains to show termination.
Note that most lines are deterministic
and can be performed in finite time. The only critical line is line 1,
i.e. finding a suitable $\cH$, which is controlled by $\MN$ to have only
a finite number of tries.
\end{proof}

\subsection{Combining \GoingUp\ steps} \label{sec:GoingUp}
We have demonstrated how we can compute standard generators for a
stingray embedded special linear group of dimension nearly twice that of
a given stingray embedded special linear group with standard generators.
In this section \algoref{GoingUpStep} is used to develop an algorithm which
can be used to compute standard generators for $G \leq \SL(d,q)$ with $G
\cong \SL(d,q)$, where $H \leq G$ with $H \cong \SL(2,q)$ are given and
standard generators of $H$ are known.

\begin{algorithm}[H]
\caption{GoingUp}
\label{GoingUp}
\small{
{\nlnonumber
\IOKw{
\begin{tabular}{@{}ll}
        \KwIn{}& {\InputT} {\normalfont $\MyIn{\langle X \rangle = G} = \SL(d,q)$}\\
        &{\InputT} {\normalfont A base change matrix $\MyIn{\BC} \in \GL(d,q)$}\\
        &{\InputT} {\normalfont $\SL(2,q) \cong \MyIn{\langle Y_2 \rangle = H} \leq \MyIn{G}^{\MyIn{\BC}}$ stingray embedded and constructively recognised}\\
        &{\InputT} {\normalfont $\MyIn{\MN} \in \N$}\\
        \KwOut{}& \MyFail{fail} OR $\MyOut{(\BC',\slp,\MN')}$ where \\
        &{\OutputT} {\normalfont $\MyOut{\BC'}$ is a base change matrix,} \\
        &{\OutputT} {\normalfont $\MyOut{\slp}$ is an MSLP from $\MyIn{X \cup Y_2}$ to the standard generators of $\MyIn{G}^{\MyOut{\BC'}}$ and} \\
        &{\OutputT} {\normalfont $\MyOut{\MN'} \in \N$ where $\MyIn{\MN}-\MyOut{\MN'}$ is the number of random selections that were used} \\
\end{tabular}
} \\
}

\nlnonumber
\LinesNumbered
\setcounter{AlgoLine}{0}
\Begin($\FuncSty{GoingUp} {(} \MyInT{G,H,\BC,\MN} {)}$)
{
  $n \asgn 2$ \;
  $\slp \asgn $ an MSLP from $X \cup Y_2$ to $X \cup Y_2$\;
  \While{$ n < d $}
  {
  $n \asgn \min \{ 2\cdot n - 1, d \}$ \tcp*[r]{\scriptsize{Nearly Double the dimension.}}
  $(H, \BC, \slp',\MN) \asgn \algoref{GoingUpStep}(G,H,\BC, \MN)$ \tcp*[r]{\scriptsize{\cref{GlobalCounter,OutputCheck}}}
  $\slp \asgn$ Compose $\slp$ and $\slp'$ \;
  }

  \Return $\MyOutT{(\BC,\slp,\MN)}$
}
}
\end{algorithm}

\begin{theorem}
\algoref{GoingUp} terminates using at most $\MN$ random selections and works correctly.
\end{theorem}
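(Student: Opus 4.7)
The plan is to argue by induction on the outer \texttt{while}-loop, treating \cref{GoingUpStepAlgorithm} as the key per-iteration lemma and leaning on \cref{GlobalCounter} and \cref{OutputCheck} to handle the shared random-selection budget and the propagation of \texttt{fail}. The loop invariant I would maintain at the top of each iteration is: the current $n$ equals the degree of a subgroup $H \leq G^{\BC}$ with $H \cong \SL(n,q)$, stingray embedded via the current base change matrix $\BC$, and $\slp$ is an MSLP from $X \cup Y_2$ whose evaluation produces standard generators of $H$. This invariant holds initially with $n = 2$ by the input hypothesis.

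For the inductive step I would invoke \cref{GoingUpStepAlgorithm}: given the invariant, one call to \algoref{GoingUpStep} either returns \texttt{fail} (in which case, by \cref{OutputCheck}, \algoref{GoingUp} also returns \texttt{fail} and the claim is vacuous) or produces standard generators of a stingray embedded subgroup of degree $n' = \min\{2n-1, d\}$ together with an updated base change matrix. Composing the returned MSLP with $\slp$ reestablishes the invariant for the next iteration. When $n$ first reaches $d$, the invariant forces $H = G^{\BC}$, so $\slp$ evaluates in $X \cup Y_2$ (and hence in $X^{\BC}$ after the usual conjugation bookkeeping) to standard generators of $G^{\BC}$, which is exactly the desired output. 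Correctness is therefore a direct telescoping of the per-step correctness already established in \cref{GoingUpStepAlgorithm}.

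Termination follows from the fact that the sequence $n_0 = 2$, $n_{i+1} = \min\{2n_i - 1, d\}$ strictly increases (since $2n - 1 > n$ for $n \geq 2$) and reaches $d$ after at most $\lceil \log_2(d-1) \rceil$ iterations, so the outer loop makes only finitely many calls to \algoref{GoingUpStep}. The random-selection bound is immediate from \cref{GlobalCounter}: $\MN$ is threaded through each call, decremented on every random draw, and returned so the caller's $\MN$ reflects the cumulative total; \cref{GoingUpStepAlgorithm} guarantees each call respects its incoming budget, so the grand total across all iterations cannot exceed the initial $\MN$.

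The one place I would be careful is the accumulation of base change matrices across iterations. Each call to \algoref{GoingUpStep} returns a base change $\BC'$ defined relative to the current $\BC$, so the composite after iteration $i$ is a product $\BC_0 \BC_1' \BC_2' \dotsb \BC_i'$. I would verify explicitly, using \cref{StandardEmbedded} and the stingray embedding property in the output specification of \algoref{GoingUpStep}, that this product is again a valid base change exhibiting the new $H$ as stingray embedded in $G$, and that the final output matrix is precisely the one with respect to which the returned MSLP evaluates to the standard generators of $G$. I expect this bookkeeping, rather than any genuinely new mathematical content, to be the main obstacle in writing the proof cleanly.
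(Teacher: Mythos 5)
Your proposal is correct and takes essentially the same route as the paper: the paper's own proof is the one-line statement that the result ``follows immediately from \cref{GoingUpStepAlgorithm}'', and your induction on the loop with the stated invariant is exactly the expansion of that remark, with termination from the strictly increasing sequence $n \mapsto \min\{2n-1,d\}$ and the selection bound from the shared counter $\MN$. The only detail worth adding is that the hypothesis $n=2$ or $n$ odd of \cref{DoublingTheDimension} is preserved along the iteration (since $2n-1$ is odd), but this is immediate and the paper does not spell it out either.
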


\begin{proof}
Follows immediately from \cref{GoingUpStepAlgorithm}.
\end{proof}

\section{Implementation and timing}
The \GoingDown, \BaseCase\ and \GoingUp\ algorithms presented here have been
implemented in \GAP~\cite{GAP} by the fourth author. In this section we
compare the run-time of this implementation with the constructive recognition
algorithms \cite{CRODD,CREVEN} available in \Magma~\cite{Magma}.

\newcommand\NOPE{\text{timeout}}
\newcommand\OOPS[2]{\footnote{#1 runs terminated, #1 exceeded the time limit}}
\begin{table}[ht]
\begin{minipage}{\linewidth}
\[
\begin{array}{c@{\hskip 15mm}rrr@{\hskip 15mm}rrr@{\hskip 15mm}rrr}
\toprule
& \multicolumn{2}{c}{q=4} &
& \multicolumn{2}{c}{q=5} &
& \multicolumn{2}{c}{q=121}
 \\ \cmidrule(r){2-3} \cmidrule(r){5-6} \cmidrule(r){8-9}
d       &\GAP\ &\Magma\   &&\GAP\ &\Magma\ &&\GAP\  &\Magma\ \\
\midrule
 100    &  < 1 &     7    &&  < 1 &    3   &&   < 1 &   86 \\
 200    &    6 &    20    &&    1 &    8   &&     2 &  436 \\
 300    &   11 &    39    &&    4 &   16   &&     7 & 12595\mathclap{\hspace{2mm}{}^a} \\
 500    &   62 &   104    &&   16 &   62   &&    31 & \NOPE \\
 700    &  168 &   257    &&   40 &  249   &&    79 & - \\
1000    &  337 &   515    &&   81 & 16047\mathclap{\hspace{2mm} {}^b} &&   176 & -  \\
2000    & 2671 &  4014    &&  532 & \NOPE  &&  2220 & -  \\
3000    & 9626 & 14780    && 2028 & \NOPE  && 11637 & -  \\
\bottomrule
   \multicolumn{6}{l}{{}^{a}\text{\footnotesize{Four runs completed, six exceeded the time limit}}} \\
   \multicolumn{6}{l}{{}^{b}\text{\footnotesize{Two runs completed, eight exceeded the time limit}}} \\
\end{array}
\]
\end{minipage}
\caption{Time in seconds comparing our \GAP\ implementation
against \Magma. An entry with a dash means we did not measure this computation.
Entries with ``\NOPE'' indicate that no run completed within 20\;000 seconds.
}
\label{RuntimeSLComparision}
\end{table}

Since the algorithms are implemented in different computer algebra systems,
which have quite different performance profiles, a fair direct comparison is
difficult. Generally, the underpinning support functions for working with
matrices and polynomials over finite fields in \Magma\ outperform those in
\GAP. Despite this, the result of our comparison shown in
Table~\ref{RuntimeSLComparision} is quite favorable to our implementation.

All computations for both implementations were performed on a Linux server
with two AMD EPYC 9554 64-core processors and 1.5 TB RAM, running 
Gentoo 2.14 with kernel version 6.1.69.
For \Magma\ version 2.28-8 was used and for \GAP\ version 4.13.0 together
with its default set of packages loaded plus the \Pkg{cvec} package
version 2.8.1 (for faster characteristic polynomials)
with a modified version of \Pkg{recog} 1.4.2, where the modification is that
we inserted our new algorithms. These modifications will be part of a
future \Pkg{recog} release.

For \Magma\ we used
\verb|ClassicalConstructiveRecognition(SL(d,q),"SL",d,q);|
as test command while for GAP
\verb|RECOG.FindStdGensSmallerMatrices_SL(SL(d,q));| was used.
For each used parameter pair $(d,q)$ we run the command ten times in
each system. \Cref{RuntimeSLComparision} reports the average of these
runs. Note that computations that exceeded 20\,000 seconds (about 5.5
hours) were aborted. In some cases only a subset of the runs for a given
pair $(d,q)$ terminated within that time limit. In this case we treated
the aborted computations as if they had completed within 20\,000 seconds
(which is generous; in some cases we actually waited for 15 hours before
aborting).

\begin{bibdiv}
\begin{biblist}

\bib{BHGO}{article}{
    AUTHOR = {B\"{a}\"{a}rnhielm, Henrik},
    author = {Holt, Derek},
    author = {Leedham-Green, Charles R.},
    author = {O'Brien, Eamonn A.},
     TITLE = {A practical model for computation with matrix groups},
   JOURNAL = {J. Symbolic Comput.},
    VOLUME = {68},
      YEAR = {2015},
    NUMBER = {part 1},
     PAGES = {27--60},
      ISSN = {0747-7171},
       DOI = {10.1016/j.jsc.2014.08.006},
}

\bib{BBS}{article}{
    AUTHOR = {Babai, L\'{a}szl\'{o}},
    AUTHOR = {Beals, Robert},
    AUTHOR = {Seress, \'{A}kos},
 Title = {Polynomial-time theory of matrix groups},
 BookTitle = {Proceedings of the 41st annual ACM symposium on theory of computing, STOC '09. Bethesda, MD, USA, May 31 -- June 2, 2009},
 ISBN = {978-1-60558-613-7},
 Pages = {55--64},
 Year = {2009},
 Publisher = {New York, NY: Association for Computing Machinery (ACM)},
 DOI = {10.1145/1536414.1536425},
 Keywords = {68Q25,20H30,68W20},
 zbMATH = {6397911},
 Zbl = {1304.68065}
}

\bib{RandomisedAlgorithms}{article} {
    AUTHOR = {Babai, L\'{a}szl\'{o}},
     TITLE = {Randomization in group algorithms: conceptual questions},
     PAGES = {1--17},
     book = {
             TITLE = {Groups and computation, {II} ({N}ew {B}runswick, {NJ}, 1995)},
            SERIES = {DIMACS Ser. Discrete Math. Theoret. Comput. Sci.},
            VOLUME = {28},
         PUBLISHER = {Amer. Math. Soc., Providence, RI},
         },
      YEAR = {1997},
       DOI = {10.1090/dimacs/028/01},
}

\bib{Magma}{article}{
    AUTHOR = {Bosma, Wieb},
    AUTHOR = {Cannon, John},
    AUTHOR = {Playoust, Catherine},
     TITLE = {The {M}agma algebra system. {I}. {T}he user language},
      NOTE = {Computational algebra and number theory (London, 1993)},
   JOURNAL = {J. Symbolic Comput.},
  FJOURNAL = {Journal of Symbolic Computation},
    VOLUME = {24},
      YEAR = {1997},
    NUMBER = {3-4},
     PAGES = {235--265},
      ISSN = {0747-7171},
   MRCLASS = {68Q40},
  MRNUMBER = {MR1484478},
       DOI = {10.1006/jsco.1996.0125},
       URL = {http://dx.doi.org/10.1006/jsco.1996.0125},
}

\bib{CRCELLER}{incollection} {
    AUTHOR = {Celler, Frank},
    AUTHOR = {Leedham-Green, Charles R.},
     TITLE = {A constructive recognition algorithm for the special linear
              group},
 BOOKTITLE = {The atlas of finite groups: ten years on ({B}irmingham, 1995)},
    SERIES = {London Math. Soc. Lecture Note Ser.},
    VOLUME = {249},
     PAGES = {11--26},
 PUBLISHER = {Cambridge Univ. Press, Cambridge},
      YEAR = {1998},
       DOI = {10.1017/CBO9780511565830.007},
}

\bib{RandomElements}{article} {
    AUTHOR = {Celler, Frank},
    AUTHOR = {Leedham-Green, Charles R.},
    AUTHOR = {Murray, Scott H.},
    AUTHOR = {Niemeyer, Alice C.},
    AUTHOR = {O'Brien, Eamonn A.},
     TITLE = {Generating random elements of a finite group},
   JOURNAL = {Comm. Algebra},
    VOLUME = {23},
      YEAR = {1995},
    NUMBER = {13},
     PAGES = {4931--4948},
      ISSN = {0092-7872},
       DOI = {10.1080/00927879508825509},
}

\bib{CRSL2}{article} {
    AUTHOR = {Conder, Marston},
    AUTHOR = {Leedham-Green, Charles R.},
    AUTHOR = {O'Brien, Eamonn A.},
     TITLE = {Constructive recognition of {${\rm PSL}(2,q)$}},
   JOURNAL = {Trans. Amer. Math. Soc.},
    VOLUME = {358},
      YEAR = {2006},
    NUMBER = {3},
     PAGES = {1203--1221},
      ISSN = {0002-9947},
       DOI = {10.1090/S0002-9947-05-03756-6},
}

\bib{SL2}{incollection} {
    AUTHOR = {Conder, Marston},
    AUTHOR = {Leedham-Green, Charles R.},
     TITLE = {Fast recognition of classical groups over large fields},
     book = {
         BOOKTITLE = {Groups and computation, {III} ({C}olumbus, {OH}, 1999)},
            SERIES = {Ohio State Univ. Math. Res. Inst. Publ.},
            VOLUME = {8},
         PUBLISHER = {de Gruyter, Berlin},
     },
     PAGES = {113--121},
      YEAR = {2001},
       DOI = {10.1515/9783110872743.113},
}

\bib{Costi}{thesis}{
    AUTHOR = {Costi, Elliot},
     TITLE = {Constructive membership testing in classical groups},
 type = {p},
 organization = {Queen Mary, University of London},
      YEAR = {2009},
}

\bib{CREVEN}{article} {
    AUTHOR = {Dietrich, Heiko},
    AUTHOR = {Leedham-Green, Charles R.},
    AUTHOR = {L\"{u}beck, Frank},
    AUTHOR = {O'Brien, Eamonn A.},
     TITLE = {Constructive recognition of classical groups in even
              characteristic},
   JOURNAL = {J. Algebra},
    VOLUME = {391},
      YEAR = {2013},
     PAGES = {227--255},
      ISSN = {0021-8693},
       DOI = {10.1016/j.jalgebra.2013.04.031},
}

\bib{BlackBoxInvolutions}{article} {
    author = {Dietrich, Heiko},
    AUTHOR = {Leedham-Green, Charles R.},
    AUTHOR = {O'Brien, Eamonn A.},
title = {Effective black-box constructive recognition of classical groups},
journal = {Journal of Algebra},
volume = {421},
pages = {460-492},
year = {2015},
note = {Special issue in memory of Ákos Seress},
issn = {0021-8693},
doi = {10.1016/j.jalgebra.2014.08.039},
}

\bib{GAP}{manual}{
    key          = {GAP},
    organization = {The GAP~Group},
    title        = {GAP -- Groups, Algorithms, and Programming, Version 4.13.0},
    year         = {2024},
    note         = {\url{https://www.gap-system.org}},
    }

\bib{SR3}{article} {
    AUTHOR = {Glasby, Stephen P.},
    AUTHOR = {Niemeyer, Alice C.},
    AUTHOR = {Praeger, Cheryl E.},
     TITLE = {Random generation of direct sums of finite non-degenerate
              subspaces},
   JOURNAL = {Linear Algebra Appl.},
    VOLUME = {649},
      YEAR = {2022},
     PAGES = {408--432},
      ISSN = {0024-3795},
       DOI = {10.1016/j.laa.2022.05.016},
}

\bib{SR2}{article} {
    AUTHOR = {Glasby, Stephen P.},
    AUTHOR = {Niemeyer, Alice C.},
    AUTHOR = {Praeger, Cheryl E.},
     TITLE = {The probability of spanning a classical space by two
              non-degenerate subspaces of complementary dimensions},
   JOURNAL = {Finite Fields Appl.},
    VOLUME = {82},
      YEAR = {2022},
     PAGES = {Paper No. 102055, 31},
      ISSN = {1071-5797},
       DOI = {10.1016/j.ffa.2022.102055},
}

\bib{C1}{article}{
    AUTHOR = {Glasby, Stephen P.},
    AUTHOR = {Niemeyer, Alice C.},
    AUTHOR = {Praeger, Cheryl E.},
	TITLE={Bipartite $q$-Kneser graphs and two-generated irreducible linear groups},
      year={2023},
      note={\href{https://arxiv.org/abs/2312.05529}{arXiv:2312.05529}},
      eprint={2312.05529},
      pages={23},
      DOI = {10.48550/arXiv.2312.05529},
	}

\bib{GEN}{article}{
    AUTHOR = {Glasby, Stephen P.},
    AUTHOR = {Niemeyer, Alice C.},
    AUTHOR = {Praeger, Cheryl E.},
TITLE={The Probability that two Elements with large
1-Eigenspaces generate a Classical Group},
      status={in preparation},
}

\bib{C9}{article}{
    AUTHOR = {Glasby, Stephen P.},
    AUTHOR = {Niemeyer, Alice C.},
    AUTHOR = {Praeger, Cheryl E.},
    AUTHOR = {Zalesski, A.E.},
TITLE={Absolutely irreducible quasisimple linear groups
containing certain elements of prime order},
      status={in preparation},
      year = {2024},
}

\bib{MainCompositionTree}{article} {
    AUTHOR = {Holt, Derek},
    AUTHOR = {Leedham-Green, Charles R.},
    AUTHOR = {O'Brien, Eamonn A.},
     TITLE = {Constructing composition factors for a linear group in
              polynomial time},
   JOURNAL = {J. Algebra},
    VOLUME = {561},
      YEAR = {2020},
     PAGES = {215--236},
      ISSN = {0021-8693},
       DOI = {10.1016/j.jalgebra.2020.02.018},
}

\bib{KS}{article} {
   AUTHOR = {Kantor, William M.},
   AUTHOR = {Seress, \'{A}kos},
    TITLE = {Black box classical groups},
  JOURNAL = {Mem. Amer. Math. Soc.},
   VOLUME = {149},
     YEAR = {2001},
   NUMBER = {708},
    PAGES = {viii+168},
     ISSN = {0065-9266},
      DOI = {10.1090/memo/0708},
}

\bib{CRODD}{article} {
    AUTHOR = {Leedham-Green, Charles R.},
    AUTHOR = {O'Brien, Eamonn A.},
     TITLE = {Constructive recognition of classical groups in odd characteristic},
   JOURNAL = {J. Algebra},
    VOLUME = {322},
      YEAR = {2009},
    NUMBER = {3},
     PAGES = {833--881},
      ISSN = {0021-8693},
       DOI = {10.1016/j.jalgebra.2009.04.028},
}

\bib{CRSL3}{article}{
    AUTHOR = {Lübeck, Frank},
    AUTHOR = {Magaard, Kay},
    AUTHOR = {O'Brien, Eamonn A.},
     TITLE = {Constructive recognition of {${\rm SL}_3(q)$}},
   JOURNAL = {Journal of Algebra},
    VOLUME = {316},
      YEAR = {2007},
    NUMBER = {2},
     PAGES = {619--633},
      ISSN = {0021-8693,1090-266X},
       DOI = {10.1016/j.jalgebra.2007.01.020},
       URL = {https://doi.org/10.1016/j.jalgebra.2007.01.020},
}

\bib{NeuPrae}{article} {
    AUTHOR = {Neumann, Peter M.},
    AUTHOR = {Praeger, Cheryl E.},
     TITLE = {A recognition algorithm for special linear groups},
   JOURNAL = {Proc. London Math. Soc. (3)},
    VOLUME = {65},
      YEAR = {1992},
    NUMBER = {3},
     PAGES = {555--603},
      ISSN = {0024-6115},
       DOI = {10.1112/plms/s3-65.3.555},
}

\bib{GAPCompTree}{article}{
    AUTHOR = {Neunh{\"o}ffer, Max},
    AUTHOR = {Seress, \'{A}kos},
 Title = {A data structure for a uniform approach to computations with finite groups},
 BookTitle = {Proceedings of the 2006 international symposium on symbolic and algebraic computation, ISSAC 06, Genova, Italy, July 9--12, 2006},
 ISBN = {1-59593-276-3},
 Pages = {254--261},
 Year = {2006},
 Publisher = {New York, NY: ACM Press},
 DOI = {10.1145/1145768.1145811},
 Keywords = {68W30,20D99,68P05},
 zbMATH = {6680777},
 Zbl = {1356.68291}
}
  
\bib{MSLP2024}{article}{
    AUTHOR = {Niemeyer, Alice C.},
    AUTHOR = {Popiel, Tomasz},
    AUTHOR = {Praeger, Cheryl E.},
    AUTHOR = {Rademacher, Daniel},
    title = {Showcasing straight-line programs with memory via matrix Bruhat decomposition},
      year={2024},
      archivePrefix={arXiv},
      primaryClass={cs.DS},
      eprint={1305.5617},
      pages={32},
      DOI = {10.48550/arXiv.1305.5617},
}

\bib{SR1}{article} {
    AUTHOR = {Niemeyer, Alice C.},
    AUTHOR = {Praeger, Cheryl E.},
     TITLE = {Elements in finite classical groups whose powers have large
              1-eigenspaces},
   JOURNAL = {Discrete Math. Theor. Comput. Sci.},
    VOLUME = {16},
      YEAR = {2014},
    NUMBER = {1},
     PAGES = {303--312},
     DOI   = {10.46298/dmtcs.3908},
}

\bib{NCR}{article} {
    AUTHOR = {Niemeyer, Alice C.},
    AUTHOR = {Praeger, Cheryl E.},
     TITLE = {A recognition algorithm for classical groups over finite
              fields},
   JOURNAL = {Proc. London Math. Soc. (3)},
    VOLUME = {77},
      YEAR = {1998},
    NUMBER = {1},
     PAGES = {117--169},
      ISSN = {0024-6115},
       DOI = {10.1112/S0024611598000422},
}

\bib{NCR2}{article} {
    AUTHOR = {Niemeyer, Alice C.},
    AUTHOR = {Praeger, Cheryl E.},
     TITLE = {Implementing a recognition algorithm for classical groups},
     PAGES = {273--296},
     book = {
                 TITLE = {Groups and computation, {II} ({N}ew {B}runswick, {NJ}, 1995)},
                SERIES = {DIMACS Ser. Discrete Math. Theoret. Comput. Sci.},
                VOLUME = {28},
             PUBLISHER = {Amer. Math. Soc., Providence, RI},
            },
      YEAR = {1997},
}

\bib{AMGR}{article} {
    AUTHOR = {O'Brien, Eamonn A.},
     TITLE = {Algorithms for matrix groups},
     book = {
             TITLE = {Groups {S}t {A}ndrews 2009 in {B}ath. {V}olume 2},
            SERIES = {London Math. Soc. Lecture Note Ser.},
            VOLUME = {388},
         PUBLISHER = {Cambridge Univ. Press, Cambridge},
              YEAR = {2011},
     },
     PAGES = {297--323},
      DOI  = {10.1017/CBO9780511842474.002},
}

\bib{PSY}{article}{
    AUTHOR = {Praeger, Cheryl E.},
    AUTHOR = {Seress, \'{A}kos},
    AUTHOR = {Yal\c{c}\i{}nkaya, \c{S}\"{u}kr\"{u}},
 Title = {Generation of finite classical groups by pairs of elements with large fixed point spaces.},
 FJournal = {Journal of Algebra},
 Journal = {J. Algebra},
 ISSN = {0021-8693},
 Volume = {421},
 Pages = {56--101},
 Year = {2015},
 DOI = {10.1016/j.jalgebra.2014.08.020},
 Keywords = {20G40,20F05,20P05},
 zbMATH = {6366943},
 Zbl = {1308.20045}
}

\bib{Taylor}{book} {
    AUTHOR = {Taylor, Donald E.},
     TITLE = {The geometry of the classical groups},
    SERIES = {Sigma Series in Pure Mathematics},
    VOLUME = {9},
 PUBLISHER = {Heldermann Verlag, Berlin},
      YEAR = {1992},
     PAGES = {xii+229},
      ISBN = {3-88538-009-9},
}

\end{biblist}
\end{bibdiv}

\end{document}